\documentclass[preprint,review,10pt]{amsart}
\usepackage{amsmath,amssymb,amsfonts,xspace}
\newtheorem{theorem}{Theorem}[section]

\theoremstyle{definition}
\newtheorem{definition}[theorem]{Definition}
\newtheorem{example}[theorem]{Example}

\newtheorem{corollary}[theorem]{Corollary}
\newtheorem{lem}[theorem]{Lemma}

\theoremstyle{remark}

\numberwithin{equation}{section}



\begin{document}

\title{On 1-absorbing prime hyperideal and some  of its generalizations  }

\author{M. Anbarloei}
\address{Department of Mathematics, Faculty of Sciences,
Imam Khomeini International University, Qazvin, Iran.
}

\email{m.anbarloei@sci.ikiu.ac.ir }


\subjclass[2010]{ 20N20, 16Y99}


\keywords{ Prime hyperideal, Primary hyperideal, 1-absorbing prime hyperideal, 1-absorbing primary hyperideal, strongly 1-absorbing primary hyperideal, weakly 1-absorbing primary hyperideal.}

\begin{abstract}
Let $R$ be a multiplicative hyperring. In this paper,   we define  the concept of 1-absorbing prime hyperideals which is a generalization of the prime hyperideals.  A proper hyperideal  $I$ of $R$ is called a 1-absorbing prime hyperideal  if for nonunit elements $x,y,z \in R$, $x \circ y \circ z \subseteq I$ , then either $x\circ y \subseteq I$ or $z \in  I$. Several  properties of the hyperideals are provided. Moreover, we introduce  the notions of 1-absorbing primary hyperideals, strongly 1-absorbing primary hyperideals and weakly 1-absorbing primary hyperideals which are generalizations of the 1-absorbing prime hyperideals and then we will show some properties of them.
\end{abstract}
\maketitle
\section{Introduction}
The hyperstructure theory was introduced by Marty in 1934, at the 8th Congress of Scandinavian Mathematicians
\cite{sorc1}, when he defined the hypergroups and began to investigate their properties with applications to groups,  algebraic functions and rational fractions. Later on,
many researchers have worked on this new field of modern algebra and developed it
 \cite {sorc2, sorc3, sorc4, hq6, hq7, davvaz1, hq8, hq9, hq10}. In a classical algebraic structure, the composition of two elements is an element, but in an algebraic hyperstructure, the composition of two elements is a set. 
A well-known type
of a hyperring is called the Krasner hyperring \cite{hq12}. The hyperring  is a hypercompositional
structure $(S, +, \cdot)$ where $(S, +)$ is a canonical hypergroup and $(S, \cdot)$ is a semigroup in which the zero element is absorbing and the operation $\cdot$ is a two-sided distributive one over the hypercomposition $+$.  
In 1982, R. Rota initiated the study of multiplicative hyperring \cite{rota} which was subsequently investigated by many authors \cite{hq1, hq2, hq3, hq4, hq5}. In the hyperring, the multiplication is a hyperoperation, while the addition is an operation. The hyperring in which the additions and the multiplications are hyperoperations was introduced by De Salvo in \cite{hq11}.

In the theory of rings, the key role of the notion of prime ideal as a generalization of the notion of prime number in the ring $\mathbb {Z}$ is  undeniable. The notion of primeness of hyperideal in a multiplicative hyperring was conceptualized by Procesi and Rota in \cite{hq4}.
The notions of prime and primary hyperideals in multiplicative hyperrings were fully studied by Dasgupta in \cite{ref1}. Badawi \cite{hq17} introduced and studied a generalization of prime ideals called
2-absorbing ideals and this notion is further generalized by Anderson and Badawi \cite{hq18, hq19}. 
In \cite{hq13}, Ghiasvand introduced the concept of 2-absorbing
hyperideal in a multiplicative hyperring which is a generalisation of prime hyperideals. Several authors have extended and generalized this concept in several ways \cite{mah, hq14, hq15, hq16}.  The concept of 1-absorbing prime ideals which is another  generalization of prime ideals was introduced in \cite{hq20}.

Motivated from the concept of the prime hyperideals, in this paper, we introduce and study the concept of 1-absorbing prime hyperideals and some  of its generalizations in a multiplicative hyperring.  Several properties of them are provided.   \\
 The paper is orgnized as follows. In Section 2, we have given some basic definitions and results of multiplicative hyperrings which we need to develop our paper. 
In Section 3, we  introduce the concept of 1-absorbing prime hyperideals and give some basic properties of them. For example we show    (Theorem \ref{o1}) that if  $I$ is a 1-absorbing prime hyperideal of $R$, then $\sqrt{I}$ is a prime hyperideal of $R$. Furthermore, $(I:z)$ is a prime hyperideal of $R$ for every nonunit element $z \in R-I$.  In Section 4, we study  a generalization of the  1-absorbing prime hyperideals which is called  1-absorbing primary hyperideals. In particular, we discuss the relations between 1-absorbing primay hyperideals and primary hyperideals. In Section 5, the notion of strongly 1-absorbing primay hyperideals is studied. For example, it is shown (Theorem \ref{54}) that there exists a strongly 1-absorbing primary hyperideal of $R$ if and only if $\sqrt{0}$ is a prime hyperideal or $R$ is a local multiplicative hyperring.   In the final section, we investigate the concept of weakly 1-absorbing primary hyperideals of $R$.

\section{Preiliminaries}
Recall first the basic terms and definitions from the hyperring theory. 
A {\it multiplicative hyperring} is an abelian group $(R,+)$ in which a hyperoperation $\circ $ is defined satisfying the following:

\begin{itemize}
\item[\rm(i)]~ for all $a, b, c \in R$, we have $a \circ (b \circ c)=(a \circ b) \circ c$;
\item[\rm(ii)]~for all $a, b, c \in R$, we have $a\circ (b+c) \subseteq a\circ b+a\circ c$ and $(b+c)\circ a \subseteq b\circ a+c\circ a$;
\item[\rm(iii)]~for all $a, b \in R$, we have $a\circ (-b) = (-a)\circ b = -(a\circ b)$.
\end{itemize}
If in (ii) the equality holds then we say that the multiplicative hyperring is strongly distributive.

Let $A$ and $B$ be two nonempty subsets  of $R$ and $r \in R$. Then  we define
\[ A \circ B=\bigcup_{x \in A,\ y \in B}x \circ y, \ \ \ \ A \circ r=A \circ \{r\}\]

A non empty subset $I$ of a multiplicative hyperring $R$ is a {\it hyperideal} if
\begin{itemize}
\item[\rm(i)]~ If $a, b \in I$, then $a - b \in I$;

\item[\rm(iii)]~ If $x \in I $ and $r \in R$, then $rox \subseteq I$.
\end{itemize}

An element $e \in R$ is said to be {\it scalar identity} if $a=aoe$ for all  $a \in R$.

\begin{definition} \cite{davvaz1}
Let $(R_1, +_1, \circ _1)$ and $(R_2, +_2, \circ_2)$ be two multiplicative hyperrings. A mapping from
$R_1$ into $R_2$ is said to be a {\it good homomorphism} if for all $x,y \in R_1$, $\phi(x +_1 y) =\phi(x)+_2 \phi(y)$ and $\phi(x\circ_1y) = \phi(x)\circ_2 \phi(y)$.
\end{definition}
\begin{definition} \cite{ref1}
A  proper hyperideal $P$ of $R$ is called a {\it prime hyperideal} if $x\circ y \subseteq P$ for $x,y \in R$ implies that $x \in P$ or $y \in P$. The intersection of all prime hyperideals of $R$ containing $I$ is called the prime radical of $I$, being denoted by $\sqrt{I}$. If the multiplicative hyperring $R$ does not have any prime hyperideal containing $I$, we define $\sqrt{I}=R$. 
\end{definition}
\begin{definition} \cite{amer2}
A proper hyperideal $I$ of $R$ is {\it maximal} in R if for
any hyperideal $J$ of $R$ with  $I \subseteq  J \subseteq  R$ then $J = R$. Also, we say that $R$ is a local multiplicative hyperring, if it has just one maximal hyperideal.
\end{definition}
Let {\bf C} be the class of all finite products of elements of $R$ i.e. ${\bf C} = \{r_1 \circ r_2 \circ  . . .  \circ r_n \ : \ r_i \in R, n \in \mathbb{N}\} \subseteq P^{\ast }(R)$. A hyperideal $I$ of $R$ is said to be a {\bf C}-hyperideal of $R$ if, for any $A \in {\bf C}, A \cap I \neq \varnothing $ implies  $A \subseteq I$.
Let I be a hyperideal of $R$. Then, $D \subseteq \sqrt{I}$ where $D = \{r \in R: r^n \subseteq I \ for \ some \ n \in \mathbb{N}\}$. The equality holds when $I$ is a {\bf C}-hyperideal of $R$(\cite {ref1}, proposition 3.2). In this paper, we assume that all hyperideals are {\bf C}-hyperideal.
\begin{definition} \cite{ref1}
A nonzero proper hyperideal $Q$ of $R$ is called a {\it primary hyperideal} if $x\circ y \subseteq Q$ for $x,y \in R$ implies that $x \in Q$ or $y \in \sqrt{Q}$. Since $\sqrt{Q}=P$ is a prime hyperideal of $R$ by Propodition 3.6 in \cite{ref1}, $Q$ is referred to as a P-primary hyperideal of $R$.
\end{definition}
\begin{definition} \cite{amer2}
Let $R$ be a multiplicative hyperring. Then we call $M_n(R)$ as
the set of all hypermatixes of $R$. Also, for all $A = (A_{ij})_{n \times n}, B = (B_{ij})_{n \times n} \in P^\star (M_n(R)), A \subseteq B$ if and only if $A_{ij} \subseteq B_{ij}$. 
\end{definition}
\begin{definition} 
\cite{amer2} Let $R$ be commutative multiplicative hyperring and $e$ be an identity (i. e., for all $a \in R$, $a \in a\circ e$). An element $x \in R$ in  is called {\it unit}, if there exists $y \in R$, such that $e \in x\circ y$. Denote the set of all unit elements in $R$ by
$U(R)$.
\end{definition}
\begin{definition} \cite{hq21}
A hyperring $R$ is called an {\it integral hyperdomain}, if for all $x, y \in  R$,
$0 \in  x \circ y$ implies that $x = 0$ or $y = 0$.
\end{definition}
\begin{definition} 
A hyperring $R$ is said to be a {\it reduced hyperring} if it has no
nilpotent elements. That is, if $x^n = 0$ for $x \in R$ and a natural number $n$, then $x = 0$.
\end{definition}
\begin{definition}
A multiplicative hyperring $F$ is called a {\it hyperfield} if  every non-zero element of $F$ is unit.
\end{definition}
\begin{definition} \cite{amer3}
 Let $R$ be a multiplicative hyperring. An  element $r \in R$  is called {\it regular} if there exists $x \in  R$
such that $r \in r^2\circ x$. So,  $R$ is called {\it regular multiplicative hyperring}, if all of elements in $R$ are regular elements. The set of all regular elements in $R$ is denoted by $V(R)$. 
\end{definition}
\begin{definition}
Let $I,J$ be two hyperideals of $R$ and $x \in R$. Then define:
\[(I:a)=\{r \in R : r\circ a \subseteq I \}\]
\end{definition}
\section{1-absorbing prime hyperideals }
 \begin{definition} 
Let $I$ be a proper hyperideal of $R$.   $I$ is called a 1-absorbing prime hyperideal  if for nonunit elements $x,y,z \in R$, $x \circ y \circ z \subseteq I$ , then either $x\circ y \subseteq I$ or $z \in  I$. 
\end{definition}
It is clear that every prime hyperideal of $R$ is a 1-absorbing prime hyperideal of $R$. Every 1-absorbing prime hyperideal of $R$ is a 2-absorbing hyperideal of $R$.

\begin{example}
In the multiplicative hyperring of integers $\mathbb{Z}_A$ with $A=\{2,3\}$ such that for any $x,y \in \mathbb{Z}_A,  x \circ y =\{x.a.y \ \vert \ a \in A\}$, every principal hyperideal generated by prime integer is a 1-absorbing prime 
hyperideal. 
\end{example}

\begin{theorem} \label {o1}
If $I$ is a 1-absorbing prime hyperideal of $R$, then $\sqrt{I}$ is a prime hyperideal of $R$. Furthermore, $(I:z)$ is a prime hyperideal of $R$ for every nonunit element $z \in R-I$.
\end{theorem}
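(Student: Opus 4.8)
The plan is to prove the two assertions separately, starting with the second one since the primeness of $\sqrt{I}$ will then follow from a short additional argument. To show $(I:z)$ is prime for a nonunit $z \in R - I$, first note that $(I:z)$ is a proper hyperideal: if it were all of $R$, then $e \in (I:z)$ would give $z \in e \circ z \subseteq I$, contradicting $z \notin I$. Now suppose $a \circ b \subseteq (I:z)$ for some $a,b \in R$; I want to conclude $a \in (I:z)$ or $b \in (I:z)$. The key point is to reduce to nonunit witnesses: if $a$ is a unit, then from $a \circ b \circ z \subseteq I$ one multiplies by $a^{-1}$ (using $e \in a \circ a^{-1}$ and the C-hyperideal hypothesis to absorb the product $a^{-1} \circ a \circ b \circ z$) to get $b \circ z \subseteq I$, i.e. $b \in (I:z)$; similarly if $b$ is a unit. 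So assume $a,b$ are both nonunit. Then $a \circ b \circ z \subseteq I$ with $a,b,z$ all nonunit, and 1-absorbing primeness gives $a \circ b \subseteq I$ or $z \in I$; the latter is excluded, so $a \circ b \subseteq I \subseteq (I:z)$ — but that is not yet what I want. Here I must instead apply the hypothesis to the triple in the right grouping: from $a \circ b \subseteq (I:z)$ I have $a \circ (b \circ z) \subseteq I$, and more usefully $(a\circ b)\circ z\subseteq I$; rewriting as $a\circ b\circ z\subseteq I$ and noting this is a product of three nonunits, 1-absorbing primeness yields $a \circ b \subseteq I$ or $z\in I$. Since $z\notin I$, we get $a\circ b\subseteq I$. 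To finish I iterate: now view $a\circ(b\circ z)\subseteq I$, but $b\circ z$ need not be a single nonunit element, so instead I argue directly that $a\circ b\subseteq I$ forces $a\in(I:z)$ or $b\in(I:z)$ by a second application — pick $w$ in $b\circ z$; then $a\circ w\subseteq a\circ b\circ z\subseteq I$... The cleanest route is: from $a\circ b\circ z\subseteq I$ regard it as $a\circ(b\circ z)$; if every element of $b\circ z$ lies in... this is getting delicate, so the honest statement of the main obstacle is below.

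For the primeness of $\sqrt{I}$: since $I$ is a proper C-hyperideal it has a prime hyperideal above it, so $\sqrt{I}\neq R$ and $\sqrt{I}$ is proper. Suppose $a\circ b\subseteq\sqrt{I}$. Using the characterization $D=\sqrt{I}$ for C-hyperideals, there is $n$ with $(a\circ b)^n\subseteq I$, hence $a^n\circ b^n\subseteq I$ (by associativity and commutativity of $\circ$). If either $a$ or $b$ is a unit we immediately get the other's power inside $I$, hence that element in $\sqrt I$. Otherwise $a^n, b^n$ are built from nonunits; writing $a^n\circ b^n = a^n\circ b^{n-1}\circ b$ as a product where we group the first two factors together as one "$x$", the second as "$y=b^{n-1}$"... again I need the three factors to be nonunit elements of $R$, which $a^n$ and $b^{n-1}$ are (products of nonunits are nonunits, since a product containing a unit... actually need: if $c\circ d$ contains a unit then $c,d$ are units — this should be recorded as a small lemma). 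Granting that, apply 1-absorbing primeness to $a\circ(a^{n-1}\circ b^{n-1})\circ b$ suitably regrouped to get $a\circ(\text{something})\subseteq I$ or $b\in I\subseteq\sqrt I$, and push the exponents down by induction on $n$ until one of $a,b$ has a power in $I$.

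The main obstacle, in both parts, is the bookkeeping forced by the fact that $x\circ y$ is a \emph{set}, not an element, so the hypothesis "$x\circ y\circ z\subseteq I$ for nonunit $x,y,z$" cannot be applied verbatim when one of the three slots is occupied by a subset like $a^n$ or $b\circ z$. The fix is to first establish the auxiliary fact that a product of nonunit elements consists of (or is contained in a set of) nonunits — equivalently, if $c\circ d$ meets $U(R)$ then $c,d\in U(R)$ — and to handle unit-valued slots by multiplying through by inverses, using the C-hyperideal assumption to keep everything inside $I$. Once that reduction is in place, both statements follow by the straightforward case analysis and induction sketched above; I expect the unit/nonunit dichotomy and the set-vs-element subtlety to be the only real friction, with the algebra itself being routine.
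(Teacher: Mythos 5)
There is a genuine gap, and it sits exactly where you flagged your difficulty: the proof that $(I:z)$ is prime. The idea you are missing is that the definition of a 1-absorbing prime hyperideal is asymmetric between the first two slots and the third, so you should \emph{reorder} the triple product to put $z$ in one of the first two slots. Concretely, for nonunit $a,b$ with $a\circ b\subseteq (I:z)$ and $a\notin (I:z)$, you have $a\circ z\nsubseteq I$; since $a\circ z\circ b = a\circ b\circ z\subseteq I$ is a product of three nonunit \emph{elements}, 1-absorbing primeness applied to the triple $(a,z,b)$ gives $a\circ z\subseteq I$ or $b\in I\subseteq (I:z)$, and the first alternative is excluded. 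This is the paper's entire argument for the second assertion, and it completely avoids the ``$b\circ z$ is a set, not an element'' obstacle you ran into: you only ever applied the hypothesis with the grouping $(a\circ b)\circ z$, which can only tell you about $a\circ b$ and $z$, never about $a\circ z$ or $b\circ z$. Your observation that the unit-valued cases are easy, and that $(I:z)$ is proper, is fine and in fact more careful than the paper.

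For the first assertion your plan (pass to $(a\circ b)^s=a^s\circ b^s\subseteq I$, record the auxiliary fact that a product of nonunits meets no units, regroup the power as a product of three nonunit factors, and conclude $a^s\subseteq I$ or $b^s\subseteq I$) is essentially the paper's argument, which writes $a^s\circ b^s$ as $a^t\circ a^{s-t}\circ b^s$ and applies the definition to the three factors. But you left this as an uncompleted induction sketch, and the same set-versus-element bookkeeping you worry about is present here too (the paper itself glosses over it by treating $a^t$, $a^{s-t}$, $b^s$ as if they were elements; a careful version picks elements $u\in a^{s-1}$, $v\in b^{s}$, etc., and uses the {\bf C}-hyperideal hypothesis). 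So: first part is the right approach but not finished; second part needs the reordering trick above, after which it is a two-line argument.
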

\begin{proof}
Suppose that  $I$ is a 1-absorbing prime hyperideal of $R$ and $a \circ b \subseteq \sqrt{I}$ for some nonunit $a, b \in R$. Therefore $(a \circ b)^s \subseteq I$ for a positive integer $s$. Clearly, we have $a ^t a^{s-t}b^s \subseteq I$ for some positive integer $t <s$. Since the hyperideal $I$ of $R$ is 1-absorbing prime, then we get either $a^s \subseteq I$ or $b^s \subseteq I$. This means either $a \in \sqrt{I}$ or $b \in \sqrt{I}$. Consequently, the hyperideal $\sqrt{I}$ of $R$ is prime. Now, let $x \circ y \subseteq (I:z)$ for some nonunit elements $x,y \in R$. Assume that  $x \notin (I:z)$. This means $x \circ z \nsubseteq  I$. Since $x \circ z \circ y \subseteq I$ and the hyperideal $I$ is 1-absorbing prime, then we get $y \in I \subseteq (I:z)$. Consequently, the hyperideal $(I:z)$ is prime.
\end{proof}
The following lemma is needed in the proof of our next result.
\begin{lem} \label{23}
Let for every nonunit element $u$ of $R$ and for every unit $v$ of $R$, $u+v$ be a unit element of $R$.  Then $R$ is a
local multiplicative hyperring.
\end{lem}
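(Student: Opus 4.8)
The plan is to produce a single maximal hyperideal by collecting all nonunit elements into one set and showing this set is a hyperideal; maximality and uniqueness then follow formally. Concretely, let $M = R - U(R)$ be the set of all nonunit elements of $R$, and I claim $M$ is a hyperideal of $R$, which will immediately be the unique maximal hyperideal. First I would check the absorption axiom: if $x \in M$ and $r \in R$, then $r \circ x \subseteq M$. Indeed, if some element $t \in r \circ x$ were a unit, then $e \in t \circ s$ for some $s$, and since $\circ$ is associative one gets $e \in r \circ x \circ s = r \circ (x \circ s)$, exhibiting $x$ as a factor of an element containing the identity; a short argument (using associativity and the definition of unit) shows $x$ would then be a unit, a contradiction. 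So $r \circ x \subseteq M$.

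The heart of the matter is closure under subtraction: if $a, b \in M$ then $a - b \in M$. This is exactly where the hypothesis of the lemma is used. Suppose toward a contradiction that $a - b$ contains a unit $v$, i.e. $v \in a + (-b)$ with $v \in U(R)$. The idea is to write one of $a$ or $b$ as (nonunit) plus (unit): rearranging, we want something like $a \in v + b$ or $b \in a + (-v) = a + (-v)$. Since $(R,+)$ is an abelian group (an ordinary operation, not a hyperoperation), the relation $v = a - b$ rearranges cleanly to $a = v + b$ and $b = a - v$. Now $b \in M$ is nonunit and $v \in U(R)$ is a unit, so by the hypothesis $a = v + b = b + v$ is a unit — contradicting $a \in M$. (If instead $a \in M$ is used as the nonunit and $-v$ as the unit — noting $-v$ is a unit since $e \in v \circ w$ gives $e \in (-v)\circ(-w)$ by axiom (iii) — then $b = a + (-v)$ is a unit, again a contradiction.) Either way $a - b$ cannot contain a unit, so $a - b \in M$.

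Having shown $M$ is a hyperideal, it is proper (the identity $e$ is a unit, so $e \notin M$, hence $M \neq R$), and any proper hyperideal $J$ consists entirely of nonunits — for if $u \in J$ were a unit then $e \in u \circ w \subseteq J$ forces $J = R$ — so $J \subseteq M$. Thus $M$ is the unique maximal hyperideal of $R$, and $R$ is a local multiplicative hyperring. The only genuinely delicate point is the first step, verifying that a factor of a unit is a unit in the hyperring setting; this needs associativity of $\circ$ together with the convention (already in force in the paper, via the definition of $U(R)$) that $R$ is commutative with identity $e$, so that from $e \in r \circ x \circ s$ one may regroup to conclude $x$ divides $e$. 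Everything else is routine.
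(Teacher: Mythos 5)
Your proof is correct, but it takes a genuinely different route from the paper's. The paper argues by contradiction with two maximal hyperideals $M_1,M_2$: distinct maximal hyperideals are comaximal, so $1=m_1+m_2$ for some $m_i\in M_i$, and the hypothesis (applied to the nonunit $-m_1$ and the unit $1$) forces $m_2=1-m_1$ to be a unit, contradicting the properness of $M_2$. You instead construct the maximal hyperideal directly, showing that the set $M$ of nonunits is a hyperideal: absorption follows from the ``a factor of a unit is a unit'' argument (valid by associativity together with the commutativity the paper assumes wherever units are discussed), and closure under subtraction is exactly where the hypothesis enters, via $a=b+(a-b)$. Your approach buys a little more: it exhibits the unique maximal hyperideal explicitly and thereby establishes its \emph{existence}, whereas the paper's argument only rules out having two, and it also avoids the paper's appeal to an external result for the comaximality step. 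Two small points of hygiene: since $+$ is an ordinary group operation in a multiplicative hyperring, $a-b$ is a single element, so ``$a-b$ contains a unit'' should simply read ``$a-b$ is a unit'' (you correct this yourself mid-argument); and one should note that $M\neq\varnothing$ (e.g. $0\in M$) so that $M$ qualifies as a hyperideal under the paper's definition.
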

\begin{proof}
Let $M_1$ and $M_2$ be two maximal hyperideals of $R$. Hence for some $m_1 \in M_1$ and $m_2 \in M_2$ we have $m_1+m_2=1$. By Theorem  3.4 in \cite{amer2}, $1-m_1=m_2$ is a unit element of $R$ which means  $M_2$ contains a unit element. This is a contradiction. Thus $R$ is a local multiplicative hyperring.
\end{proof}
\begin{theorem}
If a hyperideal $I$ of $R$ is 1-absorbing prime that  is not prime, then $R$ is a local multiplicative hyperring.
\end{theorem}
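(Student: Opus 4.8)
The plan is to argue by contradiction: assume $R$ is not local and show that $I$ is in fact prime, contradicting the hypothesis. Since $I$ is not prime, there are $a,b\in R$ with $a\circ b\subseteq I$ but $a\notin I$ and $b\notin I$. The first preliminary observation is that $a$ and $b$ must be nonunits: if, say, $a$ were a unit with $e\in a\circ a'$, then $b\in b\circ e\subseteq (a\circ b)\circ a'\subseteq I$, a contradiction; the argument for $b$ is symmetric. Hence every triple that we build out of $a$ and $b$ together with another nonunit is eligible for the $1$-absorbing condition.

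Next, since $R$ is not local, Lemma \ref{23} (used in its contrapositive form) supplies a nonunit $u$ of $R$ and a unit $v$ of $R$ such that $w:=u+v$ is again a nonunit. I would then feed the two triples $a\circ w\circ b$ and $a\circ u\circ b$ into the $1$-absorbing hypothesis. Each of them is contained in $I$, since by commutativity and associativity it equals $(a\circ b)\circ w$, respectively $(a\circ b)\circ u$, and $a\circ b\subseteq I$; moreover in each triple all three entries are nonunits. As $b\notin I$, the definition of $1$-absorbing prime forces $a\circ w\subseteq I$ and $a\circ u\subseteq I$.

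Finally I would recombine these inclusions. Using $v=w+(-u)$, the sub-distributive law, and property (iii), $a\circ v\subseteq a\circ w + a\circ(-u)=a\circ w-(a\circ u)\subseteq I+I\subseteq I$, where the last step uses that a hyperideal is closed under addition and negation. But $v$ is a unit, so choosing $v'$ with $e\in v\circ v'$ gives $a\in a\circ e\subseteq (a\circ v)\circ v'\subseteq I\circ v'\subseteq I$, contradicting $a\notin I$. Therefore $R$ is local.

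The routine parts are the hyperoperation bookkeeping (rearranging $a\circ w\circ b$ as $(a\circ b)\circ w$ and invoking closure of $I$ under $\pm$ and the identity). The one genuinely inventive step — and the natural place for the argument to stall — is the realisation that the $1$-absorbing condition applied directly to the pair $a,b$ yields nothing, because \emph{any} triple built on a pair already lying in $I$ is trivially inside $I$; the trick is instead to multiply the pair $a\circ b$ by the two nonunits $u$ and $w=u+v$ coming from the failure of locality, extract $a\circ u\subseteq I$ and $a\circ w\subseteq I$, and then subtract to force the unit $v$ into $(I:a)$.
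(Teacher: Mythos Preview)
Your argument is correct and follows essentially the same route as the paper's proof: both exploit the failure of locality (via Lemma \ref{23}) to produce a nonunit $u$ and a unit $v$ with $u+v$ nonunit, apply the 1-absorbing condition to the two triples obtained by multiplying the bad pair by $u$ and by $u+v$, and then subtract to force the unit $v$ into $(I:a)$. The only cosmetic difference is framing: the paper verifies the hypothesis of Lemma \ref{23} directly (assuming some $u+v$ is a nonunit and reaching a contradiction), whereas you assume non-locality and invoke the contrapositive of that lemma to obtain the same witnesses; also, the paper orders its triple as $u\circ a\circ b$ so that containment in $I$ follows from associativity alone, while your ordering $a\circ w\circ b$ leans on commutativity, but this is routine bookkeeping of the sort you flagged.
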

\begin{proof}
Let $I$ is a 1-absorbing prime hyperideal of $R$. Assume that $I$ is not prime hyperideal. Suppose that  we have $x \circ y \subseteq I$ for some $x,y \in R$ but $x,y \notin I$. Assume that $a$ is a nonunit element and $b$ is a unit element of $R$. Let $a+b$ is a nonunit element of $R$. Since $a \circ x \circ y \subseteq I$ and the hyperideal $I$ of $R$ is 1-absorbing prime, then we get $a \circ x \subseteq I$. Since $(a+b)\circ x \circ y \subseteq I$ and $y \notin I$, then we have $(a+b) \circ x \subseteq I$ which implies $b \circ x \subseteq I$. This means $x \in I$ which is a contradiction. Thus $a+b$ is a unit element of $R$. Hence we conclude that $R$ is a local multiplicative hyperring, by Lemma \ref{23}.
\end{proof}
\begin{theorem} \label{AA1}
Let $I$ be a 1-absorbing prime hyperideal of $R$. If $x \circ y \circ J \subseteq I$ for nonunit elements $x,y \in R$ and all proper hyperideals $J$ of $R$. Then $x \circ y \subseteq I$ or $J \subseteq I$.
\end{theorem}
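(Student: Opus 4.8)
The plan is to argue by contradiction, assuming $x\circ y\not\subseteq I$ and $J\not\subseteq I$, and to produce from these assumptions an element of $J$ that violates the $1$-absorbing prime hypothesis. First I would fix a witness $a\in J\setminus I$; since $x\circ y\circ J\subseteq I$ we have in particular $x\circ y\circ a\subseteq I$. If $a$ is nonunit, then applying the $1$-absorbing prime property of $I$ to the nonunit triple $x,y,a$ forces $x\circ y\subseteq I$ (since $a\notin I$), contradicting our assumption. So the only case that needs work is when every element of $J\setminus I$ is a unit.

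The key step, then, is to handle a unit $a\in J\setminus I$. Because $J$ is a proper hyperideal and $a\in J$ is a unit, we can try to manufacture a nonunit member of $J\setminus I$: for instance, pick any nonunit $r\in R$ (if $R$ has a scalar identity $e$, one may take $r=e-a$ or similar, or simply use that a proper hyperideal of a nontrivial hyperring contains a nonunit). Since $J$ is a hyperideal, $r\circ a\subseteq J$; and $x\circ y\circ(r\circ a)\subseteq x\circ y\circ J\subseteq I$. If some element of $r\circ a$ lies outside $I$, we can pick a nonunit such element — here is where I would invoke the $\mathbf{C}$-hyperideal hypothesis on $I$, so that $r\circ a\subseteq I$ would follow from $r\circ a\cap I\neq\varnothing$, hence $r\circ a\not\subseteq I$ means $r\circ a$ is entirely outside $I$ — and then that element together with $x,y$ violates $1$-absorbing primeness, giving $x\circ y\subseteq I$, a contradiction. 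If instead $r\circ a\subseteq I$ for every nonunit $r$, one deduces that $J\subseteq I$ after accounting for the unit elements of $J$: a unit $a\in J$ together with the relation $e\in a\circ a^{-1}$ (using an identity $e$ and inverse $a^{-1}\in R$) and distributivity lets one write any element of $J$ in terms of products $r\circ a$, forcing it into $I$.

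The main obstacle I anticipate is the unit case: the statement as phrased says "nonunit elements $x,y$" but places no restriction on the elements of $J$, so a hyperideal $J$ could in principle consist of $I$-elements plus units, and one must rule this out or show it still yields $J\subseteq I$. This is exactly the situation where the ambient structure matters — whether $R$ has an identity, whether proper hyperideals avoid units (they do, by the maximality argument in Lemma~\ref{23} and Theorem~3.4 of \cite{amer2}, a proper hyperideal cannot contain a unit) — so the cleanest route is: since $J$ is a proper hyperideal, $J$ contains no unit, hence every $a\in J$ is nonunit, and the first paragraph's argument applies verbatim to any $a\in J\setminus I$. If $J\setminus I=\varnothing$ then $J\subseteq I$ and we are done; otherwise pick a nonunit $a\in J\setminus I$, apply the $1$-absorbing prime property to $x,y,a$, and conclude $x\circ y\subseteq I$. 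I would present it in this streamlined two-case form, with the observation "a proper hyperideal contains no unit" doing the real work.
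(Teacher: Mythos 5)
Your proof is correct and takes essentially the same route as the paper: assume $x\circ y\nsubseteq I$ and $J\nsubseteq I$, pick a witness $a\in J\setminus I$, and apply the $1$-absorbing prime property to $x,y,a$ to reach a contradiction. Your final streamlined version is in fact slightly more careful than the paper's own proof, which applies the definition to $a$ without checking that $a$ is nonunit; your observation that a proper hyperideal contains no units closes that small gap.
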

\begin{proof}
Let  $x \circ y \circ J \subseteq I$ for nonunit elements $x,y \in R$ and all proper hyperideals $J$ of $R$ such that $x \circ y \nsubseteq I$ and $J \nsubseteq I$. Therefore there exists an element $a \in J$ but $a \notin I$. Then we have $x \circ y \circ a \subseteq I$ such that  $x \circ y \nsubseteq I$ and $a \notin I$. This is a contradiction, since $I$ is 1-absorbing hyperideal of $R$.
\end{proof}
\begin{theorem} \label{AAA1}
Let $I$ be a $P$-primary  hyperideal of $R$ such that for every $c \in P-I$, $(P^2:c) \subseteq I$. Then hyperideal $I$ of $R$  is 1-absorbing prime.
\end{theorem}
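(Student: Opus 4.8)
The plan is to show that the hypothesis in fact forces $I=P$, after which the conclusion is immediate: a prime hyperideal is $1$-absorbing prime, as observed right after the definition.

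First I would record the elementary inclusion that drives everything. Since $I$ is $P$-primary, $P=\sqrt{I}$ is a prime (hence proper) hyperideal by Proposition~3.6 of \cite{ref1}, and $I\subseteq\sqrt{I}=P$. Now for any $c\in P$ and any $p\in P$ we have $p\circ c\subseteq P\circ P=P^{2}$, so $p\in(P^{2}:c)$; that is, $P\subseteq(P^{2}:c)$ for every $c\in P$. (This holds whether one reads $P^{2}$ as the set $P\circ P$ or as the hyperideal it generates, so there is nothing to worry about there.)

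Next I would combine this with the standing hypothesis. Suppose, toward a contradiction, that $P\setminus I\neq\varnothing$ and pick $c\in P\setminus I$. The hypothesis gives $(P^{2}:c)\subseteq I$, while the previous step gives $P\subseteq(P^{2}:c)$; hence $c\in P\subseteq I$, contradicting $c\notin I$. Therefore $P\setminus I=\varnothing$, i.e. $P\subseteq I$, and together with $I\subseteq P$ this yields $I=P$. Thus $I$ is a prime hyperideal.

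Finally I would close the loop. If $x\circ y\circ z\subseteq I$ with $x,y,z$ nonunit and $z\notin I$, then for each $u\in x\circ y$ we have $u\circ z\subseteq x\circ y\circ z\subseteq I$, and primeness of $I$ forces $u\in I$ since $z\notin I$; hence $x\circ y\subseteq I$, so $I$ is $1$-absorbing prime. I do not expect a real obstacle: the only point requiring any care is the formal inclusion $P\circ c\subseteq P^{2}$ for $c\in P$. It is perhaps worth remarking that a more computational attack — peeling $z$ off the product $x\circ y\circ z$ and invoking primariness — stalls precisely in the case $z\in\sqrt{I}\setminus I$, and the function of the hypothesis on $(P^{2}:c)$ is exactly to eliminate that configuration by collapsing $I$ onto $P$.
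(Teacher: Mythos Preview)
Your proof is correct and takes a genuinely different route from the paper. The paper verifies the $1$-absorbing prime condition directly: assuming nonunit $a,b,c$ with $a\circ b\circ c\subseteq I$, $a\circ b\nsubseteq I$, and $c\notin I$, it uses primariness (in both directions) to obtain $c\in P\setminus I$ and, implicitly, $a\circ b\subseteq P$; hence $a\circ b\circ c\subseteq P\circ P=P^{2}$, so $a\circ b\subseteq(P^{2}:c)\subseteq I$, a contradiction. You never handle a triple product at all: you simply note that $P\subseteq(P^{2}:c)$ for every $c\in P$, so the standing hypothesis forces $P\subseteq I$ the moment $P\setminus I$ is nonempty, which is self-contradictory. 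Your argument is shorter and more revealing---it shows that the hypothesis is in fact equivalent to $I=P$, so the theorem collapses to the already-recorded fact that prime hyperideals are $1$-absorbing prime. The paper's computational approach, while heavier, is the template that would survive if the hypothesis were weakened to a colon hyperideal that does not automatically contain $P$.
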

\begin{proof}
Let $I$ be a $P$-primary  hyperideal of $R$ such that for every $a \in P-I$, $(P^2:a) \subseteq I$. Let for $a,b,c \in R$, $a \circ b \circ c \subseteq I$ but neither $a \circ b \nsubseteq I$ nor $c \notin I$. Therefore we have $c \in P-I$, because hyperideal $I$ of $R$  is $P$-primary hyperideal. Thus we get $a \circ b \subseteq (P^2:c) \subseteq I$, a contradiction. Then we have either $a \circ b \subseteq I$ or $c \in I$ which means hyperideal $I$ or $R$ is  1-absorbing prime.  
\end{proof}

\begin{theorem} \label{11126} 
Let $R$ be a multiplicative hyperring with scalar
identity 1 and $I$ be a hyperideal of $R$. If $M_n(I)$ is a 1-absorbing hyperideal of
$M_n(R)$, then $I$ is a 1-absorbing hyperideal of $R$. 
\end{theorem}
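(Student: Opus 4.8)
The plan is to argue directly: assuming $M_n(I)$ is a $1$-absorbing prime hyperideal of $M_n(R)$, I will verify the defining property of a $1$-absorbing prime hyperideal for $I$ itself. The case $n=1$ is trivial, since there $M_1(R)=R$ and $M_1(I)=I$, so I assume $n\ge 2$. First observe that $I$ is proper: if $I=R$ then $M_n(I)=M_n(R)$, contradicting that $M_n(I)$ is a proper hyperideal.

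Next, fix nonunit elements $x,y,z\in R$ with $x\circ y\circ z\subseteq I$, and for $r\in R$ let $A_r\in M_n(R)$ denote the hypermatrix whose $(1,1)$-entry equals $r$ and all of whose other entries are $0$. Using the entrywise order on hypermatrices and the product formula for hypermatrices recalled in Section~2, a direct computation shows that $A_x\circ A_y$ is the hypermatrix with $(1,1)$-entry $x\circ y$ and zeros elsewhere, and that $A_x\circ A_y\circ A_z$ has $(1,1)$-entry $x\circ y\circ z$ and zeros elsewhere; since $x\circ y\circ z\subseteq I$ this yields $A_x\circ A_y\circ A_z\subseteq M_n(I)$. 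Moreover, because $n\ge 2$, each of $A_x,A_y,A_z$ has a zero row (its second row, say), and a hypermatrix with a zero row cannot be a unit of $M_n(R)$: multiplying it by any hypermatrix leaves that row identically $0$, whereas the identity hypermatrix $I_n$ — which is a scalar identity of $M_n(R)$ because $1$ is a scalar identity of $R$ — has all diagonal entries equal to $1\ne 0$. Hence $A_x,A_y,A_z$ are nonunit elements of $M_n(R)$.

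Now I apply the hypothesis that $M_n(I)$ is $1$-absorbing prime to the containment $A_x\circ A_y\circ A_z\subseteq M_n(I)$: it gives either $A_x\circ A_y\subseteq M_n(I)$ or $A_z\in M_n(I)$. In the first case, comparing $(1,1)$-entries gives $x\circ y\subseteq I$; in the second case, the $(1,1)$-entry of $A_z$, namely $z$, lies in $I$. Either way the required alternative holds, so $I$ is a $1$-absorbing prime hyperideal of $R$.

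The bookkeeping with the entrywise order and the products of the corner hypermatrices is routine from the definitions in Section~2. The one step that genuinely needs care is the verification that $A_x,A_y,A_z$ are nonunit in $M_n(R)$: this is exactly where the assumption $n\ge 2$ is used, and it is indispensable, since the $1$-absorbing prime condition on $M_n(I)$ may only be invoked for nonunit elements.
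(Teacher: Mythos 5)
Your proof is correct and follows essentially the same route as the paper's: embed $x$, $y$, $z$ as corner hypermatrices, apply the hypothesis to their product in $M_n(R)$, and read off the $(1,1)$-entry. You are in fact more careful than the paper, which factors the corner matrix into only two factors and never verifies that the factors are nonunit elements of $M_n(R)$ — a check that is genuinely needed before the 1-absorbing prime condition can be invoked, and which your zero-row argument supplies.
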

\begin{proof}
Suppose that for $x,y,z \in R$ , $x\circ y \circ z \subseteq I$. Then 
\[ \begin{pmatrix}
x\circ y \circ z & 0 & \cdots & 0\\
0 & 0 & \cdots & 0\\
\vdots& \vdots & \ddots \vdots\\
0 & 0 & \cdots & 0 
\end{pmatrix}
\subseteq M_n(I) \]
It is clear that 
\[ \begin{pmatrix}
x \circ y \circ z  & 0 & \cdots & 0\\
0 & 0 & \cdots & 0\\
\vdots& \vdots & \ddots \vdots\\
0 & 0 & \cdots & 0
\end{pmatrix}
=
\begin{pmatrix}
x \circ y  & 0 & \cdots & 0\\
0 & 0 & \cdots & 0\\
\vdots& \vdots & \ddots \vdots\\
0 & 0 & \cdots & 0
\end{pmatrix}
\begin{pmatrix}
z & 0 & \cdots & 0\\
0 & 0 & \cdots & 0\\
\vdots& \vdots & \ddots \vdots\\
0 & 0 & \cdots & 0
\end{pmatrix}
.\]
Since $M_n(I)$ is a 1-absorbing hyperideal of $M_n(R)$, then we have 
\[ \begin{pmatrix}
x \circ y  & 0 & \cdots & 0\\
0 & 0 & \cdots & 0\\
\vdots& \vdots & \ddots \vdots\\
0 & 0 & \cdots & 0
\end{pmatrix} \subseteq M_n(I)\]
or

\[\begin{pmatrix}
z & 0 & \cdots & 0\\
0 & 0 & \cdots & 0\\
\vdots& \vdots & \ddots \vdots\\
0 & 0 & \cdots & 0
\end{pmatrix}
\in M_n(I).\]
It follows that either $x \circ y \subseteq I$ or $ z \in I$. Therefore $I$ is a 1-absorbing hyperideal of $R$.
\end{proof}
Let $(R, +, o)$ be a hyperring. We define the relation $\gamma$ as follows:\\ 
$a \gamma b$ if and only if $\{a,b\} \subseteq U$ where $U$ is a finite sum of finite products of
elements of R, i.e.,\\
$a \gamma b \Longleftrightarrow \exists z_1, ... , z_n \in R$ such that $\{a, b\} \subseteq \sum_{j \in J} \prod_{i \in I_j} z_i; \ \ I_j, J \subseteq \{1,... , n\}$.

We denote the transitive closure of $\gamma$ by $\gamma ^{\ast}$. The relation $\gamma ^{\ast}$ is the smallest equivalence relation on a multiplicative hyperring $(R, +, o)$ such that the
quotient $R/\gamma ^{\ast}$, the set of all equivalence classes, is a fundamental ring. Let $\mathfrak{U}$
be the set of all finite sums of products of elements of R we can rewrite the
definition of $\gamma ^{\ast}$ on $R$ as follows:\\
$a\gamma ^{\ast}b \Longleftrightarrow \exists z_1, ... , z_n \in R$ with $z_1 = a, z_{n+1 }= b$ and $u_1, ... , u_n \in \mathfrak{U}$ such that
$\{z_i, z_{i+1}\} \subseteq u_i$ for $i \in \{1, ... , n\}$.

Suppose that $\gamma ^{\ast}(a)$ is the equivalence class containing $a \in R$. Then, both
the sum $\oplus$ and the product $\odot$ in $R/\gamma^{\ast}$ are defined as follows:$\gamma ^{\ast}(a) \oplus \gamma^{\ast}(b)=\gamma^{\ast}(c)$ for all $c \in  \gamma^{\ast}(a) + \gamma ^{\ast}(b)$ and $\gamma^{\ast}(a) \odot \gamma^{\ast}(b)=\gamma^{\ast}(d)$ for all $d \in  \gamma^{\ast}(a) o \gamma^{\ast}(b)$
Then $R/\gamma^{\ast}$ is a ring, which is called a fundamental ring of $R$ (see also \cite{sorc4}).

\begin{theorem}
Let $R$ be a multiplicative hyperring with scalar identity $e$. Then the hyperideal $I$ of $R$ is 1-absorbing hyperideal if and only if $I/\gamma ^{\ast}$ is a 1-absorbing ideal of $R/\gamma ^{\ast}$. 
\end{theorem}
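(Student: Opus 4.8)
The plan is to transport the $1$-absorbing condition across the natural surjection $\pi\colon R\to R/\gamma^{\ast}$, $\pi(a)=\gamma^{\ast}(a)$, which is a good homomorphism of multiplicative hyperrings sending the scalar identity $e$ to the multiplicative identity $\gamma^{\ast}(e)$ of the ring $R/\gamma^{\ast}$. Before the element chase I would record three dictionary facts. (a) For all $a,b,c\in R$ one has $\gamma^{\ast}(a)\odot\gamma^{\ast}(b)\odot\gamma^{\ast}(c)=\gamma^{\ast}(w)$ for \emph{every} $w\in a\circ b\circ c$, and similarly $\gamma^{\ast}(a)\odot\gamma^{\ast}(b)=\gamma^{\ast}(v)$ for every $v\in a\circ b$; this is the defining property of the fundamental ring. (b) For $a\in R$, one has $a\in I$ if and only if $\gamma^{\ast}(a)\in I/\gamma^{\ast}$; one implication is immediate, and the other says that the {\bf C}-hyperideal $I$ is saturated for $\gamma^{\ast}$, which I would extract from the generators of $\gamma$ (finite sums of finite products) using that $A\cap I\neq\varnothing$ forces $A\subseteq I$ for $A\in{\bf C}$. (c) For $a\in R$, one has $a\in U(R)$ if and only if $\gamma^{\ast}(a)\in U(R/\gamma^{\ast})$; the forward implication is clear since $e\in a\circ a'$ gives $\gamma^{\ast}(a)\odot\gamma^{\ast}(a')=\gamma^{\ast}(e)$ by (a), while the converse is the point at which the scalar-identity hypothesis is genuinely needed (or one quotes \cite{amer2}). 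In particular, by the forward half of (c), any preimage under $\pi$ of a nonunit of $R/\gamma^{\ast}$ is a nonunit of $R$.

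Assume first that $I$ is a $1$-absorbing hyperideal of $R$, and let $\gamma^{\ast}(x),\gamma^{\ast}(y),\gamma^{\ast}(z)\in R/\gamma^{\ast}$ be nonunits with $\gamma^{\ast}(x)\odot\gamma^{\ast}(y)\odot\gamma^{\ast}(z)\in I/\gamma^{\ast}$. Then $x,y,z$ are nonunit in $R$ by the observation after (c), and by (a) together with (b) every $w\in x\circ y\circ z$ lies in $I$, i.e. $x\circ y\circ z\subseteq I$. Since $I$ is $1$-absorbing, either $x\circ y\subseteq I$, in which case $\gamma^{\ast}(x)\odot\gamma^{\ast}(y)=\gamma^{\ast}(v)\in I/\gamma^{\ast}$ for $v\in x\circ y$ by (a) and (b), or $z\in I$, in which case $\gamma^{\ast}(z)\in I/\gamma^{\ast}$ by (b). Hence $I/\gamma^{\ast}$ is a $1$-absorbing ideal of $R/\gamma^{\ast}$.

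Conversely, assume $I/\gamma^{\ast}$ is $1$-absorbing, and let $x,y,z\in R$ be nonunit with $x\circ y\circ z\subseteq I$. By (c) the elements $\gamma^{\ast}(x),\gamma^{\ast}(y),\gamma^{\ast}(z)$ are nonunits of $R/\gamma^{\ast}$, and by (a) and (b) we have $\gamma^{\ast}(x)\odot\gamma^{\ast}(y)\odot\gamma^{\ast}(z)=\gamma^{\ast}(w)\in I/\gamma^{\ast}$ for $w\in x\circ y\circ z$. The $1$-absorbing property of $I/\gamma^{\ast}$ then forces $\gamma^{\ast}(x)\odot\gamma^{\ast}(y)\in I/\gamma^{\ast}$ or $\gamma^{\ast}(z)\in I/\gamma^{\ast}$; in the first case (a) and the nontrivial half of (b) give $v\in I$ for each $v\in x\circ y$, hence $x\circ y\subseteq I$, and in the second case (b) gives $z\in I$. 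Thus $I$ is a $1$-absorbing hyperideal of $R$.

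The element-level arguments above are routine book-keeping about $\pi$; the substance of the proof is concentrated in the nontrivial halves of (b) and (c). Verifying (b) amounts to showing that $\gamma^{\ast}$ never identifies an element outside $I$ with one inside it, which is where the {\bf C}-hyperideal assumption is used. The step I expect to be the main obstacle is the nontrivial half of (c): showing that a nonunit of $R$ cannot become invertible in the fundamental ring, which is precisely what forces the scalar-identity hypothesis into the statement. If the paper has already established the analogous correspondence for prime or primary hyperideals, the present result follows along the same lines, with ``prime'' replaced by ``$1$-absorbing''.
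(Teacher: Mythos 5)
Your proposal follows essentially the same route as the paper's proof: transport the condition along the canonical surjection $R\to R/\gamma^{\ast}$ and chase elements through $\gamma^{\ast}(a)\odot\gamma^{\ast}(b)\odot\gamma^{\ast}(c)=\gamma^{\ast}(a\circ b\circ c)$. You are in fact more explicit than the paper, which silently uses both your fact (b) (that $\gamma^{\ast}(a)\in I/\gamma^{\ast}$ forces $a\in I$) and never addresses your fact (c) on the correspondence of nonunits, so your version is, if anything, the more complete one.
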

\begin{proof}
($\Longrightarrow$) Let for $x, y,z \in R/\gamma ^{\ast}, \ x \odot y \odot z \in I/\gamma ^{\ast}$ . Then, there exist $a, b,c\in R$ such
that $x =\gamma^{\ast}(a), y = \gamma^{\ast}(b) , z= \gamma^{\ast}(c)$ and $x \odot y \odot z= \gamma^{\ast}(a) \odot \gamma^{\ast}(b) \odot \gamma^{\ast}(c)=\gamma^{\ast}(a \circ  b \circ c)$. Since $\gamma^{\ast} (a) \odot \gamma^{\ast}(b) \odot \gamma^{\ast}(c)=
\gamma^{\ast}(a\circ b \circ c) \in I/\gamma^{\ast}$, then $a \circ b \circ c\subseteq I$. Since $I$ is a 1-absorbing hyperideal of $R$, then $a \circ b \subseteq I$ or $c \in I$. Hencey $x \odot y= \gamma^{\ast}(a) \odot  \gamma^{\ast}(b)=\gamma^{\ast}(a \circ b)\in I/\gamma ^{\ast}$ or $z=\gamma^{\ast}(c)\in I/\gamma ^{\ast}$.
Thus $I/\gamma ^{\ast}$ is a 1-absorbing ideal of $R/\gamma ^{\ast}$.\\
($\Longleftarrow$) Suppose that $a \circ b \circ c \subseteq I$ for $a, b, c \in R$. Then $\gamma^{\ast}(a), \gamma^{\ast}(b), \gamma^{\ast}(c)\in R/\gamma^{\ast} $ and
$\gamma^{\ast}(a) \odot\gamma^{\ast}(b) \odot\gamma^{\ast}(c)= \gamma^{\ast}(a \circ  b \circ c) \in I/\gamma^{\ast}$. Since $I/\gamma ^{\ast}$ is a 1-absorbing ideal of $R/\gamma ^{\ast}$, then we have $\gamma^{\ast}(b) \odot \gamma^{\ast}(b)= \gamma^{\ast}(a \circ b)\in I/\gamma^{\ast}$ or  $\gamma^{\ast}(c) \in I/\gamma^{\ast}$. It means that $a \circ b \subseteq I$ or $c \in I$. Hence $I$ is a 1-absorbing hyperideal of $R$.
\end{proof}
\begin{theorem}
Let $I$ be a proper hyperideal of $R$. Then the following statements are equivalent.
\begin{itemize}
\item[{(1)}] $I$ is a 1-absorbing prime hyperideal of $R$.
\item[{(2)}] If $J \circ H \circ K \subseteq I$ for some proper hyperideals $J, H,K$ of $R$, then either $J \circ H \subseteq I$ or $K \subseteq I$.
\end{itemize}
\end{theorem}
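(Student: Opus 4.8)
The plan is to prove the two implications separately: $(1)\Rightarrow(2)$ is an immediate reduction to the defining property of a $1$-absorbing prime hyperideal, while $(2)\Rightarrow(1)$ requires passing from the elements $x,y,z$ to the hyperideals they generate.

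For $(1)\Rightarrow(2)$, suppose $J\circ H\circ K\subseteq I$ for proper hyperideals $J,H,K$ of $R$, and assume toward a contradiction that $J\circ H\nsubseteq I$ and $K\nsubseteq I$. Since $J\circ H=\bigcup_{a\in J,\,b\in H}a\circ b$, there exist $a\in J$ and $b\in H$ with $a\circ b\nsubseteq I$, and since $K\nsubseteq I$ there is $c\in K$ with $c\notin I$. The crucial observation is that $a,b,c$ are nonunit: if a unit $u$ were to lie in a proper hyperideal $J'$, then (taking $w$ with $e\in u\circ w$, where $e$ is the identity of $R$) we would get $e\in u\circ w\subseteq J'$, hence $r\in r\circ e\subseteq J'$ for every $r\in R$, i.e. $J'=R$, a contradiction. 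Thus $a\circ b\circ c\subseteq J\circ H\circ K\subseteq I$ with $a,b,c$ nonunit, so by $(1)$ either $a\circ b\subseteq I$ or $c\in I$, contradicting the choice of $a,b,c$. Therefore $J\circ H\subseteq I$ or $K\subseteq I$.

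For $(2)\Rightarrow(1)$, let $x,y,z$ be nonunit elements of $R$ with $x\circ y\circ z\subseteq I$, and let $J=\langle x\rangle$, $H=\langle y\rangle$, $K=\langle z\rangle$ be the hyperideals of $R$ generated by $x$, $y$, $z$; these are proper since $x,y,z\notin U(R)$. The main step is to verify $J\circ H\circ K\subseteq I$. A generic element of $J$ is a finite sum of terms of the form $nx$ or $r\circ x$ with $n\in\mathbb Z$, $r\in R$, and similarly for $H$ and $K$; applying the sub-distributive law of the hyperoperation repeatedly, a product of one element from each of $J,H,K$ is contained in a finite sum of ``basic'' products, each of which, by associativity and commutativity of $\circ$, can be placed inside $k\,(s\circ t\circ w)\circ(x\circ y\circ z)$ for suitable $k\in\mathbb Z$ and $s,t,w\in R$ (with some of $s,t,w$ possibly absent). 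Since $x\circ y\circ z\subseteq I$ and $I$ is a hyperideal, each such basic product lies in $I$, and as $I$ is closed under addition and subtraction the whole sum lies in $I$; hence $J\circ H\circ K\subseteq I$. Now $(2)$ gives $J\circ H\subseteq I$ or $K\subseteq I$, and since $x\circ y\subseteq J\circ H$ and $z\in K$ we obtain $x\circ y\subseteq I$ or $z\in I$, which is exactly $(1)$.

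I expect the fussy part to be this last verification that $J\circ H\circ K\subseteq I$: because distributivity in a multiplicative hyperring is only an inclusion, one must expand the product of the generated hyperideals carefully, repeatedly invoking associativity of $\circ$ and the absorption property of $I$, rather than a one-line computation. The only structural ingredient, used in $(1)\Rightarrow(2)$, is the elementary fact that a unit cannot belong to a proper hyperideal.
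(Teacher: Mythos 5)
Your proof is correct and follows essentially the same route as the paper: reduce $(1)\Rightarrow(2)$ to the element-level definition by picking $a\in J$, $b\in H$ with $a\circ b\nsubseteq I$ and $c\in K\setminus I$, and prove $(2)\Rightarrow(1)$ by passing to the principal hyperideals $\langle x\rangle,\langle y\rangle,\langle z\rangle$. You additionally spell out two points the paper leaves implicit (that a unit cannot lie in a proper hyperideal, and the sub-distributivity computation showing $\langle x\rangle\circ\langle y\rangle\circ\langle z\rangle\subseteq I$), which is a welcome improvement rather than a deviation.
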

\begin{proof}
$(1) \Longrightarrow (2)$ Let $I$ be  a 1-absorbing prime hyperideal of $R$ and $J \circ H \circ K \subseteq I$ for some proper hyperideals $J, H,K$ of $R$ such that $J \circ H \nsubseteq I$. Thus there exist nonunit elements $j \in J$ and $h \in H$ such that $j \circ h \nsubseteq I$. Since $j \circ h \circ K \subseteq I$ and $j \circ h \nsubseteq I$, then we get $K \subseteq I$ by Lemma \ref{AA1}.
\\$(2) \Longrightarrow (1)$ Let $x \circ y \circ z \subseteq I$ for some nonunit elements $x,y,z \in R$ such that $x \circ y \nsubseteq I$. Assume that $J=\langle x \rangle$, $H=\langle y \rangle$ and $K=\langle z \rangle$. This means $J\circ H \circ K \subseteq I$. Since $J \circ H \nsubseteq I$, then we conclude that $K \subseteq I$ and so $z \in I$.
\end{proof}
\section{1-absorbing primary hyperideals }
\begin{definition} 
Let $I$ be a proper hyperideal of $R$.   $I$ is called a 1-absorbing primary hyperideal  if for nonunit elements $x,y,z \in R$, $x \circ y \circ z \subseteq I$ , then either $xoy \subseteq I$ or $z \in  \sqrt{I}$. 
\end{definition}

   Recall that a hyperideal $I$  of $R$ is said to be a   2-absorbing primary  hyperideal if for $x,y,z \in R$, $x \circ y \circ z \subseteq I$, then $xoy \subseteq I$ or $x \circ z \subseteq \sqrt{I}$  or $y \circ z \subseteq \sqrt{I}$.
\begin{theorem} \label{21}
\begin{itemize}
\item[\rm{(1)}]~ Every primary hyperideal of $R$ is a 1-absorbinh primary hyperideal.
\item[\rm{(2)}]~ Every 1-absorbinh primary hyperideal of $R$ is a 2-absorbing primary hyperideal.
\end{itemize}
\end{theorem}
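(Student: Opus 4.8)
The plan is to treat the two assertions separately; each reduces, after a short manipulation, to the defining condition of the relevant class, so no deep idea is required.

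For part (1), I would let $Q$ be a primary hyperideal of $R$ and take nonunit elements $x,y,z\in R$ with $x\circ y\circ z\subseteq Q$. If $z\in\sqrt Q$ there is nothing to prove, so assume $z\notin\sqrt Q$. For every element $t\in x\circ y$ (recall $x\circ y$ is a nonempty subset of $R$) we have $t\circ z\subseteq (x\circ y)\circ z\subseteq Q$; since $Q$ is primary and $z\notin\sqrt Q$, this forces $t\in Q$. As $t$ ranges over $x\circ y$ we obtain $x\circ y\subseteq Q$, so $Q$ is $1$-absorbing primary. (The argument in fact never uses that $x,y,z$ are nonunit.)

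For part (2), I would let $I$ be a $1$-absorbing primary hyperideal of $R$ and take arbitrary $x,y,z\in R$ with $x\circ y\circ z\subseteq I$. First I would record that $\sqrt I$ is a hyperideal of $R$, being an intersection of prime hyperideals (or $R$ itself); hence $z\in\sqrt I$ implies $x\circ z\subseteq R\circ\sqrt I\subseteq\sqrt I$. Then I split according to whether one of $x,y,z$ is a unit. If $z$ is a unit, cancelling it in $x\circ y\circ z\subseteq I$ gives $x\circ y\subseteq I$; if $x$ (resp.\ $y$) is a unit, the same cancellation gives $y\circ z\subseteq I\subseteq\sqrt I$ (resp.\ $x\circ z\subseteq I\subseteq\sqrt I$); in each case one of the three alternatives in the definition of a $2$-absorbing primary hyperideal holds. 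In the remaining case $x,y,z$ are all nonunit, so the $1$-absorbing primary hypothesis applies directly and yields $x\circ y\subseteq I$ or $z\in\sqrt I$, and in the second situation $x\circ z\subseteq\sqrt I$ by the observation above. Either way $I$ is $2$-absorbing primary.

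The only routine computations are the unit-cancellations, which use that $R$ is commutative with an identity $e$: if $z$ is a unit, pick $z'$ with $e\in z\circ z'$; then $x\circ y\subseteq x\circ y\circ e\subseteq x\circ y\circ(z\circ z')=(x\circ y\circ z)\circ z'\subseteq I\circ z'\subseteq I$, and symmetrically when $x$ or $y$ is the unit. I do not expect a genuine obstacle; the one point requiring care is the bookkeeping between the set-valued hyperproduct and the element-wise formulations of ``primary'' and ``$1$-absorbing primary'' — this is precisely why part (1) is argued via an arbitrary $t\in x\circ y$ rather than treating $x\circ y$ as a single element.
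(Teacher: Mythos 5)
Your proof is correct. The paper states this theorem without any proof, so there is nothing to compare against; your argument is the natural one and, importantly, it handles the one point that genuinely needs attention — in part (2) the paper's definition of a 2-absorbing primary hyperideal quantifies over \emph{all} $x,y,z\in R$ while the 1-absorbing primary hypothesis only covers nonunit triples, so your case analysis with the unit-cancellation $x\circ y\subseteq (x\circ y)\circ e\subseteq (x\circ y\circ z)\circ z'\subseteq I$ is exactly what is required, as is the element-wise reduction via $t\in x\circ y$ in part (1).
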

\begin{example}
In the multiplicative hyperring of integers $\mathbb{Z}_A$ with $A=\{2,3\}$ such that for any $x,y \in \mathbb{Z}_A,  x \circ y =\{x.a.y \ \vert \ a \in A\}$, hyperideal $<5>=\{5n \ \vert \ n \in \mathbb{Z}\}$  is a 1-absorbinh primary hyperideal. 
\end{example}
\begin{example}
Let $(\mathbb{Z},+,.)$ be the ring of integers. We define the hyperoperation $a \circ b =\{2x.y,3xy\}$ for all $a,b \in \mathbb{Z}$. Then $(\mathbb{Z},+,\circ)$ is a multiplicative hyperring. The hyperideals $<2>$ and $<3>$ of $\mathbb{Z}$ are 1-absorbinh primary hyperideals.
\end{example}
\begin{theorem} \label{22}
Let $I$ be a hyperideal of $R$. If $I$ is 1-absorbing primary  then
$\sqrt{I}$ is a prime hyperideal of $R$.
\end{theorem}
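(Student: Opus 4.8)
The plan is to mimic the proof of Theorem~\ref{o1}, with the 1-absorbing prime condition replaced by the 1-absorbing primary one and the corresponding conclusion weakened to membership in a radical. Since $\sqrt{I}$ is an intersection of prime hyperideals of $R$ it is already a hyperideal, so it is enough to verify primeness: I would take $a,b\in R$ with $a\circ b\subseteq\sqrt{I}$ and show $a\in\sqrt{I}$ or $b\in\sqrt{I}$. If $a$ is a unit, pick $a'$ with $e\in a\circ a'$; then $b\in e\circ b\subseteq a'\circ(a\circ b)\subseteq a'\circ\sqrt{I}\subseteq\sqrt{I}$, and symmetrically if $b$ is a unit. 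So I may assume $a$ and $b$ are nonunit, exactly as in Theorem~\ref{o1}.

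Next I would exploit the standing {\bf C}-hyperideal hypothesis: by Proposition 3.2 of \cite{ref1} one has $\sqrt{I}=\{r\in R:\ r^{n}\subseteq I\text{ for some }n\in\mathbb{N}\}$, so $a\circ b\subseteq\sqrt{I}$ yields a positive integer $s$ with $(a\circ b)^{s}\subseteq I$; since $(a\circ b)^{s}\subseteq I$ implies $(a\circ b)^{s+1}\subseteq(a\circ b)\circ I\subseteq I$, I may assume $s\geq 2$. As in the proof of Theorem~\ref{o1}, commutativity and associativity give $(a\circ b)^{s}=a^{s}\circ b^{s}=a\circ(a^{s-1})\circ b^{s}\subseteq I$. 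Now suppose $a\notin\sqrt{I}$, i.e. $a^{s}\not\subseteq I$; pick $c\in a^{s}\setminus I$ and write $c\in a\circ u$ for some $u\in a^{s-1}$, so that $a\circ u\not\subseteq I$ and $u$ is nonunit. For every $v\in b^{s}$ the triple $(a,u,v)$ consists of nonunit elements and satisfies $a\circ u\circ v\subseteq a^{s}\circ b^{s}\subseteq I$, so the definition of 1-absorbing primary hyperideal forces $v\in\sqrt{I}$. Hence $b^{s}\subseteq\sqrt{I}$.

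It remains to deduce $b\in\sqrt{I}$ from $b^{s}\subseteq\sqrt{I}$. A prime hyperideal of $R$ contains $I$ if and only if it contains $\sqrt{I}$, hence $\sqrt{\sqrt{I}}=\sqrt{I}$; combining this with the radical description above, $b^{s}\subseteq\sqrt{I}$ gives $b\in\sqrt{\sqrt{I}}=\sqrt{I}$ (alternatively, if $b^{s}$ lies in a prime hyperideal then so does $b$, by an easy induction on $s$). Thus in all cases $a\in\sqrt{I}$ or $b\in\sqrt{I}$, so $\sqrt{I}$ is a prime hyperideal of $R$.

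The step I expect to be most delicate is passing from the set inclusion $a\circ(a^{s-1})\circ b^{s}\subseteq I$ to an application of the defining property, which is stated for \emph{elements} $x,y,z\in R$: one must choose representatives $u\in a^{s-1}$, $v\in b^{s}$ and check they are nonunit (this follows from $a,b$ being nonunit, since if an element of $a\circ a^{s-2}$ were a unit then $a$ would be a unit), and one must turn the per-representative disjunction into a single global one, as done above by fixing $u$ with $a\circ u\not\subseteq I$. A cleaner alternative would be to first record a subset version of the 1-absorbing primary condition --- if $J\circ H\circ K\subseteq I$ for proper hyperideals $J,H,K$ of $R$, then $J\circ H\subseteq I$ or $K\subseteq\sqrt{I}$ --- parallel to the characterization already proved for 1-absorbing prime hyperideals, and invoke it directly.
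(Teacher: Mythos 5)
Your proof is correct and follows essentially the same route as the paper's: write $(a\circ b)^{s}=a\circ a^{s-1}\circ b^{s}\subseteq I$ and apply the 1-absorbing primary condition to conclude $a^{s}\subseteq I$ or $b^{s}\subseteq\sqrt{I}$. You additionally make explicit several steps the paper glosses over (the reduction to nonunit $a,b$, the choice of nonunit representatives $u\in a^{s-1}$ and $v\in b^{s}$ so that the element-wise definition applies to the set inclusion, and the identity $\sqrt{\sqrt{I}}=\sqrt{I}$ needed to pass from $b^{s}\subseteq\sqrt{I}$ to $b\in\sqrt{I}$), which improves the rigor but does not change the argument.
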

\begin{proof}
Let for some $x,y \in R$, $x \circ y \subseteq \sqrt{I}$. We suppose that $x,y$ are nonunit elements of $R$. We assume that for $n \geq 2$, $(x \circ y)^n \subseteq I$. Since $(x \circ y)^n=x^n \circ y^n=x^{n-1} \circ x \circ y^n \subseteq I$, we have $x^{n-1} \circ x=x^n \subseteq I$ or $y ^n \subseteq \sqrt{I}$, because $I$ is a 1-absorbing primary hyperideal of $R$. Thus $x \in \sqrt{I}$ or $y \in \sqrt{I}$ which means $\sqrt{I}$ is a prime hyperideal of $R$.
\end{proof}

\begin{theorem} \label{24}
Let $I$ be   a 1-absorbing primary hyperideal of $R$ such that is not a primary hyperideal. Then $R$ is a local multiplicative hyperring. 
\end{theorem}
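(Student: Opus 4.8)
The plan is to reduce, exactly as in the $1$-absorbing prime case, to Lemma \ref{23}: it suffices to show that $u+v$ is a unit whenever $u$ is nonunit and $v$ is a unit. Since $I$ is not primary, fix $x,y\in R$ with $x\circ y\subseteq I$ but $x\notin I$ and $y\notin\sqrt{I}$. First I would check that $x$ and $y$ are both nonunit: if $x$ were a unit with $e\in x\circ x'$, then $y\in e\circ y\subseteq x'\circ(x\circ y)\subseteq x'\circ I\subseteq I\subseteq\sqrt{I}$, a contradiction, and symmetrically $y$ being a unit would force $x\in I$, again a contradiction.

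Now fix an arbitrary nonunit $a$ and an arbitrary unit $b$, and suppose for contradiction that $a+b$ is nonunit. Since $a\circ x\circ y\subseteq a\circ I\subseteq I$ with $a,x,y$ all nonunit and $I$ being $1$-absorbing primary, and since $y\notin\sqrt{I}$, we obtain $a\circ x\subseteq I$. Likewise $(a+b)\circ x\circ y\subseteq I$ with $a+b,x,y$ nonunit gives $(a+b)\circ x\subseteq I$ (again because $y\notin\sqrt{I}$). Using distributivity and that $I$ is a hyperideal closed under negatives, $b\circ x\subseteq((a+b)+(-a))\circ x\subseteq (a+b)\circ x+(-a)\circ x\subseteq I+I=I$. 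Since $b$ is a unit, this forces $x\in I$, contradicting the choice of $x$. Hence $a+b$ is a unit, and Lemma \ref{23} yields that $R$ is a local multiplicative hyperring.

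The argument is essentially a transcription of the proof of the analogous theorem for $1$-absorbing prime hyperideals, with $\sqrt{I}$ replacing $I$ in the slot governed by the absorbing hypothesis. The only points needing care are the verifications that the triples $(a,x,y)$ and $(a+b,x,y)$ consist of nonunit elements (done above, with $a$ and $a+b$ nonunit by choice and by the contradiction hypothesis respectively), and the short distributive computation passing from $(a+b)\circ x\subseteq I$ and $a\circ x\subseteq I$ to $b\circ x\subseteq I$. Neither of these is a genuine obstacle, so I expect the proof to be short.
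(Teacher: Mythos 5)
Your proof is correct and follows essentially the same route as the paper: produce a non-primary witness $x\circ y\subseteq I$ with $x\notin I$, $y\notin\sqrt I$, apply the 1-absorbing primary condition to the triples $(a,x,y)$ and $(a+b,x,y)$ to get $b\circ x\subseteq I$ and hence the contradiction $x\in I$, then invoke Lemma \ref{23}. Your version is in fact slightly more careful than the paper's on two points it glosses over — checking that $x$ and $y$ are nonunit, and deriving $b\circ x\subseteq I$ via the inclusion form of distributivity rather than asserting $(a+b)\circ x=a\circ x+b\circ x$.
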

\begin{proof}
Let hyperideal $I$ of $R$ be 1-absorbing primary such that is not primay. Let  for some nonunit elements $x,y \in R$, $x \circ y \subseteq I$. By the assumption,  we conclude that $x \notin I$ and $y \notin \sqrt{I}$. Let $u$ be a nonunit element of $R$. Then we get $u \circ x \circ y \subseteq I$. Since hyperideal $I$ is  1-absorbing
primary  and $y \notin \sqrt{I}$, then $u \circ x \subseteq I$. Let $v$ be a unit element of $R$ such that $u+v$ is a nonunit element of $R$. It is clear that $(u+v) \circ x \circ y \subseteq I$. Since $I$ is a 1-absorbing primary hyperideal of $R$, we get  $(u+v)\circ x =u \circ x+v \circ x \subseteq I$, because  $y \notin \sqrt{I}$. Since $u \circ x \subseteq I$, we have $v \circ x \subseteq I$ which implies $x \in I$ which is a contradiction. Hence $u+v$ is a unit
element of $R$. By Lemma \ref{23}, $R$ is a local multiplicative hyperring.
\end{proof}
In the following Theorem, we show that if $R$ is not a local multiplicative hyperring, then every 1-absorbing
primary hyperideal is primary.
\begin{theorem} \label{25}
Suppose that  $R$ is not a local multiplicative hyperring. Then 
$I$  is a 1-absorbing primary hyperideal of $R$ if and only if $I$ is a primary hyperideal of $R$.
\end{theorem}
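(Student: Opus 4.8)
The plan is to deduce this equivalence directly from the two results already proved in this section, namely Theorem~\ref{21}(1) and Theorem~\ref{24}; no new hyperring-theoretic input is required, so the argument will be short.

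For the backward implication, I would simply invoke Theorem~\ref{21}(1): if $I$ is a primary hyperideal of $R$, then $I$ is a 1-absorbing primary hyperideal of $R$, and this holds with no hypothesis on $R$ whatsoever.

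For the forward implication I would argue by contraposition against Theorem~\ref{24}. Suppose $I$ is a 1-absorbing primary hyperideal of $R$ that fails to be primary. Then Theorem~\ref{24} forces $R$ to be a local multiplicative hyperring, which contradicts the standing hypothesis that $R$ is not local. Hence every 1-absorbing primary hyperideal of $R$ is in fact primary, completing the equivalence.

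The only point requiring a little care — and the closest thing to an obstacle here — is the boundary case $I=0$, i.e.\ reconciling the ``nonzero proper'' clause in the definition of primary hyperideal with the ``proper'' clause in the definition of 1-absorbing primary hyperideal. I would handle this either by tacitly restricting to nonzero $I$, or by observing that if the zero hyperideal were 1-absorbing primary but not primary, then Theorem~\ref{24} would again make $R$ local, contrary to hypothesis; so this case causes no trouble. Apart from that, the statement is a formal corollary of the preceding two theorems and needs no computation.
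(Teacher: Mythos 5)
Your proof is correct, and it takes a genuinely shorter route than the paper. You treat the theorem as a formal corollary: the backward direction is Theorem~\ref{21}(1), and the forward direction is exactly the contrapositive of Theorem~\ref{24}, so nothing new needs to be computed. The paper instead re-proves the forward implication from scratch: it takes nonunits $x,y$ with $x\circ y\subseteq I$ and $y\notin\sqrt{I}$, uses the contrapositive of Lemma~\ref{23} to extract a nonunit $u$ and a unit $v$ with $u+v$ nonunit (this is precisely where non-locality enters), and then applies the 1-absorbing primary condition to $u\circ x\circ y$ and $(u+v)\circ x\circ y$ to force $v\circ x\subseteq I$ and hence $x\in I$. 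That computation is essentially the proof of Theorem~\ref{24} run in reverse, so the paper's version is redundant but self-contained, while yours avoids the duplication at the cost of leaning entirely on the earlier result. Your remark about the boundary case is fair: the paper's definition of a primary hyperideal carries a ``nonzero'' clause that the 1-absorbing primary definition does not, and the paper silently ignores this mismatch in both Theorem~\ref{24} and Theorem~\ref{25}; your observation that the zero-ideal case is absorbed by the same contrapositive argument is a reasonable (indeed more careful) way to dispose of it.
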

\begin{proof}
$\Longrightarrow$  Let $I$ be a 1-absorbing primary hyperideal of $R$ and let $x \circ y \subseteq I$ for some nonunit elements $x, y \in R$ such that  $y \notin \sqrt{I}$. By Lemma \ref{23}, there exist a nonunit element $u \in R$ and a unit element $v \in R$ such that $u+v$ is a nonunit element of $R$, because $R$ is not a local multiplicative hyperring. It is clear that $u \circ x \circ y \subseteq I$. Since $I$  is a 1-absorbing primary hyperideal of $R$ and $y \notin \sqrt{I}$, then we get $u \circ x \subseteq I$. Moreover, $(u+v) \circ x \circ y \subseteq I$. Then $(u+v)\circ x=u \circ x+v \circ x \subseteq I$, because $I$ is a 1-absorbing primary hyperideal of $R$ and $y \notin \sqrt{I}$. Thus $v \circ x \subseteq I$ which implies $x \in I$. Consequently $I$ is a primary hyperideal of $R$.\\
$\Longleftarrow$ It is clear.
\end{proof}
\begin{theorem} \label{26}
Let  $R_1$ and $R_2$ be two multiplicative hyperrings. Let $I$ be a hyperideal of $R_1 \times R_2$. The following statements are equivalent.
\begin{itemize}
\item[{(1)}]~ $I$ is a 1-absorbing primary hyperideal of $R_1 \times R_2$.
\item[{(2)}]~ $I$ is a primary hyperideal of $R_1 \times R_2$.
\item[{(3)}]~ $I=I_1 \times R_2$ for some primary hyperideal $I_1$ of $R_1$.
\end{itemize}
\end{theorem}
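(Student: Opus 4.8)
The plan is to close the loop $(2)\Rightarrow(1)\Rightarrow(2)\Rightarrow(3)\Rightarrow(2)$, so that the three conditions become equivalent. The implication $(2)\Rightarrow(1)$ is just Theorem \ref{21}(1). For $(1)\Rightarrow(2)$ the only point to observe is that a product $R_1\times R_2$ of two nonzero multiplicative hyperrings (with scalar identities) is never a local hyperring: if $M_i$ is a maximal hyperideal of $R_i$, then no proper hyperideal of $R_1\times R_2$ can contain both $M_1\times R_2$ and $R_1\times M_2$, since such a hyperideal would contain $(0,e_2)$ and $(e_1,0)$, hence their sum $(e_1,e_2)$, which is the scalar identity, forcing it to be all of $R_1\times R_2$. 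Thus $R_1\times R_2$ has at least two distinct maximal hyperideals, and Theorem \ref{25} immediately gives that every $1$-absorbing primary hyperideal of $R_1\times R_2$ is primary. So the real content of the theorem is the equivalence $(2)\Leftrightarrow(3)$, the classical description of primary ideals in a direct product transported to hyperrings.

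For $(3)\Rightarrow(2)$, let $I=I_1\times R_2$ with $I_1$ a primary hyperideal of $R_1$; then $I$ is proper because $I_1\neq R_1$, and nonzero because $R_2\neq 0$. Using the description $\sqrt{J}=\{r:r^n\subseteq J\text{ for some }n\}$, valid for $\mathbf{C}$-hyperideals, together with $(r_1,r_2)^n=r_1^n\times r_2^n$, one checks $\sqrt{I_1\times I_2}=\sqrt{I_1}\times\sqrt{I_2}$ in general, hence $\sqrt I=\sqrt{I_1}\times R_2$. Now if $(a_1,a_2)\circ(b_1,b_2)\subseteq I$, the first coordinate gives $a_1\circ_1 b_1\subseteq I_1$, so $a_1\in I_1$ or $b_1\in\sqrt{I_1}$; in the former case $(a_1,a_2)\in I_1\times R_2=I$, in the latter $(b_1,b_2)\in\sqrt{I_1}\times R_2=\sqrt I$. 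Hence $I$ is primary.

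For $(2)\Rightarrow(3)$ I would argue in three steps. Step 1: every hyperideal $J$ of $R_1\times R_2$ decomposes as $J=\pi_1(J)\times\pi_2(J)$; one inclusion is clear, and for the reverse one notes that if $(b_1,c_2)\in J$ then $(e_1,0)\circ(b_1,c_2)=\{(b_1,0)\}\subseteq J$ and symmetrically $(0,b_2)\in J$ for $b_2\in\pi_2(J)$, whence $(b_1,b_2)=(b_1,0)+(0,b_2)\in J$; write $I=I_1\times I_2$. Step 2: every prime hyperideal $Q=Q_1\times Q_2$ of $R_1\times R_2$ satisfies $Q_1=R_1$ or $Q_2=R_2$, since otherwise, picking $a_i\in R_i\setminus Q_i$ and using $0\in Q_i$, one has $(a_1,0)\circ(0,a_2)=\{(0,0)\}\subseteq Q$ while neither factor lies in $Q$, contradicting primeness; applying this to the prime hyperideal $\sqrt I=\sqrt{I_1}\times\sqrt{I_2}$ (prime by Theorem \ref{22}) gives, up to symmetry, $\sqrt{I_2}=R_2$, and since $e_2\in\sqrt{I_2}$ forces $e_2\in I_2$ and hence $I_2=R_2$, we conclude $I=I_1\times R_2$. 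Step 3: $I_1$ is primary, because $a_1\circ_1 b_1\subseteq I_1$ implies $(a_1,e_2)\circ(b_1,e_2)=(a_1\circ_1 b_1)\times\{e_2\}\subseteq I$, so $(a_1,e_2)\in I$ (i.e. $a_1\in I_1$) or $(b_1,e_2)\in\sqrt I=\sqrt{I_1}\times R_2$ (i.e. $b_1\in\sqrt{I_1}$).

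The part needing care is the bookkeeping with the zero element and the scalar identities inside the product: Steps 1 and 2 both rest on the identities $(e_1,0)\circ(x_1,x_2)=\{(x_1,0)\}$ and $(a_1,0)\circ(0,a_2)=\{(0,0)\}$, which use that $e_i$ is a scalar identity of $R_i$ and that $0$ is an absorbing element (equivalently, that $\{0\}$ is a hyperideal, as is tacitly assumed throughout the paper). One should also keep an eye on the "nonzero" clause in the definition of a primary hyperideal in the degenerate case where $I_1$ turns out to be the zero hyperideal; apart from that, everything reduces to routine coordinatewise checks once the product is understood as $(a_1,a_2)\circ(b_1,b_2)=(a_1\circ_1 b_1)\times(a_2\circ_2 b_2)$.
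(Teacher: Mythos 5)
The paper states this theorem with no proof at all, so there is nothing to compare your argument against; the route you take -- reducing $(1)\Leftrightarrow(2)$ to the non-locality of a product via Theorem \ref{25}, then proving the classical decomposition $(2)\Leftrightarrow(3)$ coordinatewise -- is the natural one and is essentially sound, granted the standing conventions you correctly flag: that $R_1,R_2$ carry scalar identities $e_1,e_2$ (needed both for the decomposition $J=\pi_1(J)\times\pi_2(J)$ and to produce two distinct maximal hyperideals) and that $0$ is absorbing, i.e.\ $r\circ 0=\{0\}$. Neither assumption follows from the axioms in Section 2, but both are used tacitly elsewhere in the paper (e.g.\ in Theorem \ref{11126} and Corollary \ref{55}), so invoking them here is consistent with the paper's practice.

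The one genuine defect is concealed by the words ``up to symmetry'' in Step 2 of $(2)\Rightarrow(3)$. Your argument shows that the prime hyperideal $\sqrt I=\sqrt{I_1}\times\sqrt{I_2}$ must have $\sqrt{I_1}=R_1$ or $\sqrt{I_2}=R_2$, and hence that $I=R_1\times I_2$ or $I=I_1\times R_2$; it does not show that the second alternative always occurs, and it cannot: if $I_2$ is any proper primary hyperideal of $R_2$, then $R_1\times I_2$ is primary in $R_1\times R_2$ by your own $(3)\Rightarrow(2)$ computation with the factors exchanged, yet it is not of the form $I_1\times R_2$. So statement (3) as printed is not equivalent to (2); it must be read as ``$I=I_1\times R_2$ for some primary hyperideal $I_1$ of $R_1$, or $I=R_1\times I_2$ for some primary hyperideal $I_2$ of $R_2$'' (compare the symmetric formulation of condition (2) in the weakly 1-absorbing analogue in Section 6). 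Your proof establishes exactly this corrected disjunction, and with that emendation the cycle of implications closes. Everything else -- the two-maximal-hyperideals argument, the identity $\sqrt{I_1\times I_2}=\sqrt{I_1}\times\sqrt{I_2}$ for $\mathbf{C}$-hyperideals, and Step 3 -- checks out.
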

\begin{definition}
A nonzero nonunit element $x$ of $R$ is called irreducible if $x \in 
x_1 \circ x_2$ for some $x_1, x_2 \in R$, then $x_1$ is a unit of $R$ or $x_2$ is a unit of $R$.  .
\end{definition}
\begin{definition}
A nonzero element $x$ of $R$ is called prime if $x_1 \circ x_2 \subseteq x \circ r$ for some $x_1,x_2,r \in R$, then  $x_1 \in x \circ r_1$ for some $r_1 \in R $ or   $x_2 \in x \circ r_2$ for some $r_2 \in R_2$.
\end{definition}
\begin{lem} \label{27}
Let $R$ be a local strongly distributive multiplicative hyperring. If $x$ is a nonzero prime element of $R$, then
$x$ is an irreducible element of R.
\end{lem}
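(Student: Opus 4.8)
The plan is to extract a usable instance of the prime-element hypothesis from the factorization $x\in x_1\circ x_2$ by means of the standing assumption that every hyperideal is a $\mathbf{C}$-hyperideal, and then to finish with a Nakayama-type argument that exploits locality. Write $M$ for the unique maximal hyperideal of $R$, so that the units of $R$ are exactly the elements of $R\setminus M$, and let $e$ be the identity. Assume $x$ is a nonzero prime element (and, as is implicit in the comparison with irreducibility, a nonunit), and suppose $x\in x_1\circ x_2$; we must show $x_1\in U(R)$ or $x_2\in U(R)$. First note that $x_1\neq 0$ and $x_2\neq 0$, since otherwise $0\circ a=\{0\}$ would force $x=0$.

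The first real step is to apply the primeness of $x$. Since $x_1\circ x_2\in\mathbf{C}$ and $x\in(x_1\circ x_2)\cap\langle x\rangle$, the $\mathbf{C}$-hyperideal property gives $x_1\circ x_2\subseteq\langle x\rangle$; and as $R$ is strongly distributive with identity, $x\circ R$ is itself a hyperideal containing $x$, so $\langle x\rangle=x\circ R$, and hence $x_1\circ x_2\subseteq x\circ r$ for a suitable $r\in R$. Feeding this into the definition of a prime element and relabelling, we may assume $x_1\in x\circ r_1$ for some $r_1\in R$.

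Then, substituting $x\in x_1\circ x_2$ back in and using associativity and strong distributivity, $x_1\in x\circ r_1\subseteq(x_1\circ x_2)\circ r_1=x_1\circ(x_2\circ r_1)$, so $x_1\in x_1\circ t$ for some $t\in x_2\circ r_1$. If $t$ is a unit, then from $t\in x_2\circ r_1$ we get $e\in t\circ t^{-1}\subseteq x_2\circ(r_1\circ t^{-1})$, so $x_2\in U(R)$ and we are done. If $t$ is a nonunit, then $t\in M$, so $x_1\in x_1\circ t\subseteq\langle x_1\rangle\circ M$; since $\langle x_1\rangle\circ M$ is a hyperideal containing $x_1$, this forces $\langle x_1\rangle=\langle x_1\rangle\circ M$, and because $\langle x_1\rangle$ is finitely generated, Nakayama's lemma gives $\langle x_1\rangle=0$, contradicting $x_1\neq 0$. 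Hence $x_2$ is a unit, so $x$ is irreducible.

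I expect the main obstacle to be the passage in the second paragraph: converting the bare membership $x\in x_1\circ x_2$ into an honest instance $x_1\circ x_2\subseteq x\circ r$ of the prime-element definition. This is the hyperring counterpart of the trivial ring fact that $x\mid x_1x_2$ when $x=x_1x_2$, but here $x_1\circ x_2$ is a set whereas the definition only speaks of a single $r$, so one genuinely needs the $\mathbf{C}$-hyperideal hypothesis together with the identity $\langle x\rangle=x\circ R$ (valid for strongly distributive hyperrings with identity); relatedly, the endgame requires a Nakayama lemma for finitely generated hyperideals over a local multiplicative hyperring. If instead the definition of a prime element is read with $\langle x\rangle$ (equivalently $x\circ R$) in place of $x\circ r$, this step becomes immediate and the remainder of the argument is unchanged.
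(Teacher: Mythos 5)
Your opening move coincides with the paper's: both proofs pass from $x\in x_1\circ x_2$ to (after relabelling) $x_1\in x\circ r_1$ by the primeness of $x$, and both then substitute the factorization back into itself. The paper, however, substitutes in the other direction: from $x_1\in x\circ r_1$ it gets $x\in x\circ r_1\circ x_2$, hence $0\in x-x\circ r_1\circ x_2=x\circ(1-r_1\circ x_2)$ by strong distributivity, and then uses locality (via Theorem 3.4 of the reference on multiplicative hyperrings of fractions) to say that $1-r_1\circ x_2$ is a unit when $x_2$ is a nonunit, forcing $x=0$ --- contradicting $x\neq 0$. Your version lands on $x_1\in x_1\circ t$ with $t\in x_2\circ r_1$ and aims the contradiction at $x_1=0$ instead.

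The genuine gap is your appeal to ``Nakayama's lemma'' for finitely generated hyperideals over a local multiplicative hyperring. No such lemma is stated or proved anywhere in this paper or its cited sources, and it is not a formal triviality in this setting (the usual determinant or $1-m$ arguments have to be re-derived for hyperoperations, and $\langle x_1\rangle\circ M$ as the paper defines products of sets is not even obviously a hyperideal). You are importing a substantial unproven ingredient where none is needed: from $x_1\in x_1\circ t$ with $t\in M$ you can conclude directly, exactly as the paper does, that $0\in x_1-x_1\circ t=x_1\circ(1-t)$ with $1-t$ a unit (locality), whence $x_1=0$, contradicting your own observation that $x_1\neq 0$. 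With that replacement your argument closes and is essentially a variant of the paper's. Separately, the difficulty you flag about triggering the prime-element hypothesis is real --- $x_1\circ x_2\subseteq x\circ R=\bigcup_{r}x\circ r$ does \emph{not} yield containment in a single $x\circ r$, so your second paragraph as written is a non sequitur --- but you should know that the paper's own proof silently makes the very same jump, so this is a defect of the lemma's formulation rather than something your proof is worse off for; your suggested reading of the definition with $\langle x\rangle$ in place of $x\circ r$ is the honest repair.
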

\begin{proof}
Let $x$ is a nonzero prime element of $R$ such that for some $x_1, x_2 \in R$, $x \in x_1 \circ x_2$.  We may assume that for some $r_ 1\in R$, $x_1 \in x \circ r_1$. Now, we show that $x_2$ is a unit element of $R$. Then we obtain $x \in x \circ r_1 \circ x_2$. Hence  $0 \in x \circ (1-r_1 \circ x_2) = x-x \circ r_1 \circ x_2$. Let $x_2$ be a
nonunit element of $R$.  Since $R$ is  a local multiplicative hyperring, then $1-r_1 \circ x_2$ is a unit element of $R$ which implies  $x=0$ which is a contradiction.
Thus  $x_2$ is a unit element of $R$. Consequently,  $x$ is an irreducible element of $R$.
\end{proof}
\begin{theorem} \label{28}
Let $R$ be a local strongly distributive multiplicative hyperring with maximal hyperideal $M$. If  element $x$ of $M$ is  a
nonzero prime element of $R$ with  $M \neq  x\circ R$, then $x \circ M$ is a 1-absorbing primary hyperideal of $R$ that is not a primary hyperideal of $R$.
\end{theorem}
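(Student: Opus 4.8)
The plan is to establish, in order: (a) $x\circ M$ is a proper hyperideal of $R$; (b) $\sqrt{x\circ M}=x\circ R$ and this is a prime hyperideal; (c) $x\circ M$ is $1$-absorbing primary; (d) $x\circ M$ is not primary. Two standing remarks I would use throughout: since $R$ is local with maximal hyperideal $M$, the set of nonunits of $R$ is exactly $M$ (a nonunit $a$ generates a proper hyperideal $a\circ R$, which lies in the unique maximal hyperideal $M$; conversely $M$ is proper, so contains no unit), and $x$ is irreducible by Lemma \ref{27}.

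For (a): using strong distributivity, if $a\in x\circ m_{1}$ and $b\in x\circ m_{2}$ with $m_{1},m_{2}\in M$, then $a-b\in x\circ m_{1}+x\circ(-m_{2})=x\circ(m_{1}-m_{2})\subseteq x\circ M$, and for $r\in R$ we get $r\circ a\subseteq (r\circ x)\circ m_{1}=x\circ(r\circ m_{1})\subseteq x\circ M$ since $r\circ m_{1}\subseteq M$; thus $x\circ M$ is a hyperideal, and it is proper because $x\circ M\subseteq x\circ R\subseteq M\subsetneq R$. For (b): the crucial point is that, $x$ being a prime element, $x\circ R$ is a prime hyperideal of $R$, i.e.\ $p\circ q\subseteq x\circ R$ implies $p\in x\circ R$ or $q\in x\circ R$; granting this, $x\circ R$ is radical, so $\sqrt{x\circ M}\subseteq\sqrt{x\circ R}=x\circ R$, while conversely if $y\in x\circ r$ then $y\circ y\subseteq x\circ(x\circ r\circ r)\subseteq x\circ M$ (using $x\in M$), so $y\in\sqrt{x\circ M}$; hence $\sqrt{x\circ M}=x\circ R$.

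For (c): let $a,b,c$ be nonunits of $R$ (so $a,b,c\in M$) with $a\circ b\circ c\subseteq x\circ M$, and suppose $c\notin\sqrt{x\circ M}=x\circ R$; I must show $a\circ b\subseteq x\circ M$. From $a\circ b\circ c\subseteq x\circ R$ and primeness of $x\circ R$: for every $v\in b\circ c$ we have $a\circ v\subseteq x\circ R$, so $a\in x\circ R$ or $v\in x\circ R$; hence either $a\in x\circ R$, or else $b\circ c\subseteq x\circ R$, and then primeness together with $c\notin x\circ R$ gives $b\in x\circ R$. So one of $a,b$ lies in $x\circ R$; if $a\in x\circ R$ then, using $b\in M$, $a\circ b\subseteq (x\circ R)\circ b=x\circ(R\circ b)\subseteq x\circ M$, and symmetrically if $b\in x\circ R$ (using $a\in M$). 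Thus $x\circ M$ is $1$-absorbing primary. For (d): since $x\circ R\subseteq M$ and $M\neq x\circ R$, choose $\beta\in M\setminus(x\circ R)$; then $x\circ\beta\subseteq x\circ M$ while $\beta\notin\sqrt{x\circ M}=x\circ R$, and $x\notin x\circ M$ — for if $x\in x\circ m$ with $m\in M$, then, $x$ being irreducible by Lemma \ref{27}, one of $x,m$ would have to be a unit, contradicting $x,m\in M$. Hence the pair $(x,\beta)$ shows $x\circ M$ is not primary.

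The main obstacle is precisely the assertion in (b) that $x\circ R$ is a prime hyperideal: the paper's definition of ``prime element'' is stated with a single $r$ in ``$x_{1}\circ x_{2}\subseteq x\circ r$'', whereas establishing primeness of $x\circ R$ requires handling $p\circ q\subseteq x\circ R=\bigcup_{r}x\circ r$, so one must either reduce to a single $x\circ r$ (plausible in the strongly distributive setting) or prove the hyperring analogue of ``the ideal generated by a prime element is prime'' as a separate lemma. Once $x\circ R$ is known to be a prime hyperideal (equivalently $\sqrt{x\circ M}=x\circ R$), everything else is routine: the hyperideal axioms in (a) and the inclusion $x\circ R\subseteq\sqrt{x\circ M}$ are direct strong-distributivity computations, the case analysis in (c) uses only primeness of $x\circ R$ and the equivalence of ``nonunit'' with ``in $M$'', and the failure of primariness in (d) follows immediately from irreducibility of $x$ together with the hypothesis $M\neq x\circ R$.
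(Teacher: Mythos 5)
Your proposal follows the same essential strategy as the paper's proof, but it is organized more carefully and actually supplies several steps the paper takes for granted. The paper argues directly from the prime-element definition: assuming $a\circ b\nsubseteq x\circ M$ it deduces $a\notin x\circ r_1$ and $b\notin x\circ r_2$ for all $r_1,r_2\in M$ (the same observation you use, in contrapositive form, to pass from ``$a$ or $b$ lies in $x\circ R$'' to $a\circ b\subseteq x\circ M$), and then applies primeness of $x$ to $a\circ b\circ c\subseteq x\circ r$ to force $c\in x\circ t\subseteq x\circ R=\sqrt{x\circ M}$; the non-primary part is identical to yours (Lemma \ref{27} gives $x\notin x\circ M$, and $m\in M\setminus x\circ R$ gives $x\circ m\subseteq x\circ M$ with $m\notin\sqrt{x\circ M}$). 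What you add that the paper omits: the verification that $x\circ M$ is a proper hyperideal, and the computation $\sqrt{x\circ M}=x\circ R$ (the inclusion $x\circ R\subseteq\sqrt{x\circ M}$ via $y^2\subseteq x\circ M$), which the paper simply asserts twice. Your repackaging of the prime-element hypothesis as ``$x\circ R$ is a prime hyperideal'' makes the case analysis in (c) cleaner than the paper's, which applies the element-wise definition to the set $a\circ b$ as if it were an element.

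The one step you leave open --- that $p\circ q\subseteq x\circ R=\bigcup_r x\circ r$ can be reduced to containment in a single $x\circ r$ so that the prime-element definition applies --- is a genuine gap, but it is exactly the gap in the paper's own proof: the paper writes ``Since for some $r\in M$, $a\circ b\circ c\subseteq x\circ r$'' with no justification, even though the hypothesis only gives $a\circ b\circ c\subseteq x\circ M=\bigcup_{m\in M}x\circ m$. So you have correctly isolated the only non-routine point; to make either argument complete one would need a lemma (e.g.\ using strong distributivity or the standing {\bf C}-hyperideal assumption) showing that the hyperideal generated by a prime element is a prime hyperideal. Everything else in your write-up is sound.
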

\begin{proof}
Let  for some nonunit elements
$a,b,c \in R$, $a \circ b \circ c \subseteq x \circ M$. Let $a \circ b \nsubseteq x \circ M$. Then for all $r_1,r_2 \in M $, $a \notin x \circ r_1$ and $b \notin x \circ r_2$. Since for some $r \in M$, $a \circ b \circ c \subseteq x  \circ r$ and for all $s \in M$, $a \circ b \nsubseteq x \circ s$, then we have $c \in x \circ t$ for some $t \in M$. Consequently, $c \in \sqrt{x \circ M}=x \circ R$. Thus  $x \circ M$ is a 1-absorbing primary hyperideal of $R$.
Now, Since $M \neq  x\circ R$, then there exists $m \in M$ but $m \notin x \circ R$. It is clear that $x \circ m \subseteq x\circ M$. By Lemma \ref{27}, $x$ is an irreducible element of $R$, because $x$ is a nonzero prime element of $R$. Then we conclude that $x \notin x \circ M$. Since $\sqrt{x \circ M}=x \circ R$, then we get $m \notin \sqrt{x \circ M}$ which means $x\circ M$ is not a primary hyperideal of $R$. 
\end{proof}
\begin{theorem} \label{29}
Let $I$ is a 1-absorbing primary hyperideal of $R$. Suppose that $I$ is  not a primary hyperideal of $R$. Then  for some irreducible element $x\in R-I$ and some nonunit
element $y\in R-\sqrt{I}$, $x \circ y \subseteq I$. Moreover, if $x \circ y \subseteq I$ for some nonunit elements $x \in R-I$ and $y \in \sqrt{I}$, then element $x$ of $R$ is irreducible.
\end{theorem}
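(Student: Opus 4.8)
The plan is to get both assertions from one observation about how a nontrivial factorization of an element of $I$'s "annihilator-like" partner interacts with the defining property of $I$, and the standing hypothesis that every hyperideal — in particular $I$ — is a $\mathbf{C}$-hyperideal will be doing the real work. I would prove the second ("Moreover") statement first, since the existence claim in the first sentence follows from it at once: because $I$ fails to be primary there are $a,b\in R$ with $a\circ b\subseteq I$, $a\notin I$ and $b\notin\sqrt I$, and both $a$ and $b$ must be nonunit — if, say, $a$ were a unit with $e\in a\circ a'$ then $b\in b\circ e\subseteq a'\circ(a\circ b)\subseteq a'\circ I\subseteq I\subseteq\sqrt I$, a contradiction, and symmetrically a unit $b$ would force $a\in I$. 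Also $a\neq 0$ since $0\in I$. Hence, granting the second assertion applied to $(x,y)=(a,b)$, the element $a$ is irreducible, and $(a,b)$ is exactly the pair $(x,y)$ required in the first assertion. (I would also note that the second clause should read $y\in R-\sqrt I$, in line with the first; with $y\in\sqrt I$ the statement has no content.)

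The core step is therefore: if $x,y$ are nonunit with $x\in R-I$, $y\notin\sqrt I$ and $x\circ y\subseteq I$, then $x$ is irreducible. First, $x$ is a nonzero nonunit (nonzero because $0\in I$), so testing irreducibility is meaningful. Suppose $x\in x_1\circ x_2$ and, toward a contradiction, that neither $x_1$ nor $x_2$ is a unit. Since $x\in x_1\circ x_2$ we have $x\circ y\subseteq(x_1\circ x_2)\circ y=x_1\circ x_2\circ y$, and the right-hand side is a member of $\mathbf{C}$; moreover it meets $I$ nontrivially, because the hyperproduct $x\circ y$ is nonempty and contained in $I$. As $I$ is a $\mathbf{C}$-hyperideal this forces $x_1\circ x_2\circ y\subseteq I$. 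Now $x_1,x_2,y$ are all nonunit, so the definition of a $1$-absorbing primary hyperideal gives $x_1\circ x_2\subseteq I$ or $y\in\sqrt I$; the latter is excluded, so $x\in x_1\circ x_2\subseteq I$, contradicting $x\notin I$. Hence one of $x_1,x_2$ is a unit and $x$ is irreducible.

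I expect the only subtle point to be the passage from "$x\in x_1\circ x_2$ together with $x\circ y\subseteq I$" to "$x_1\circ x_2\circ y\subseteq I$": the element $x$ is only one member of the set $x_1\circ x_2$, so this is precisely the place where the blanket assumption that hyperideals are $\mathbf{C}$-hyperideals is indispensable, via the fact that $x_1\circ x_2\circ y\in\mathbf{C}$ meets $I$. Everything else is a routine unwinding of the definitions; in particular, unlike several neighbouring results, this one does not need the locality of $R$ supplied by Theorem~\ref{24}.
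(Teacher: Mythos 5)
Your proof is correct and follows essentially the same route as the paper's: take a witness $x\circ y\subseteq I$ with $x\notin I$, $y\notin\sqrt{I}$ to the failure of primariness, suppose $x\in x_1\circ x_2$ with both factors nonunit, upgrade $x\circ y\subseteq I$ to $x_1\circ x_2\circ y\subseteq I$, and apply the $1$-absorbing primary condition to force $x\in x_1\circ x_2\subseteq I$, a contradiction. You are merely more explicit than the paper about the two points it glosses over --- that the witnesses to non-primariness are automatically nonunit, and that the inclusion $x_1\circ x_2\circ y\subseteq I$ rests on the standing $\mathbf{C}$-hyperideal assumption --- and your reading of the ``Moreover'' clause as $y\in R-\sqrt{I}$ is clearly the intended one.
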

\begin{proof}
Suppose that $I$ is not a primary hyperideal of $R$. Then for some  nonunit elements $x\in R-I$ and  $y\in R-\sqrt{I}$, $x \circ y \subseteq I$. Assume that $x$ is not irreducible. Hence $x \in a \circ b$ for some nonunit elements $a,b \in R$. Since $I$
is a 1-absorbing primary hyperideal of $R$ and $x \circ y \subseteq a \circ b \circ y \subseteq I$ and $y \notin \sqrt{I}$, then $a \circ b \subseteq I$ which implies $x \in I$ which is a contradiction. Thus $x$ is  irreducible.
\end{proof}
\begin{theorem}  \label{310}
Suppose that  $R$ is  a local hyperring with maximal hyperideal $M$. Let $P$ be a prime hyperideal of $R$ with  $P\subseteq M$. Then hyperideal $P\circ M$ of $R$ is a 1-absorbing primary hyperideal.
\end{theorem}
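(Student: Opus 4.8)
The plan is to verify the defining condition of a 1-absorbing primary hyperideal for $I := P \circ M$ directly, reducing the whole matter to the primeness of $P$ together with the hypothesis that $R$ is local.

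First I would collect a few preliminary facts. The hyperideal $P \circ M$ is proper, since $P \circ M \subseteq M \subsetneq R$. Next, $\sqrt{P \circ M} = P$: on one hand $P \circ M \subseteq P$, and since $P$ is prime it equals its own radical, so $\sqrt{P \circ M} \subseteq \sqrt{P} = P$; on the other hand, every $p \in P$ lies in $M$ because $P \subseteq M$, hence $p^2 = p \circ p \subseteq P \circ M$ and therefore $p \in \sqrt{P \circ M}$. Finally, recall that since $R$ is local with maximal hyperideal $M$, every nonunit element of $R$ belongs to $M$ (any proper hyperideal is contained in some maximal hyperideal, which must be $M$).

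Now for the main step, let $x, y, z$ be nonunit elements of $R$ with $x \circ y \circ z \subseteq P \circ M$; I must show $x \circ y \subseteq P \circ M$ or $z \in \sqrt{P \circ M}$. By the locality remark, $x, y, z \in M$. Since $P \circ M \subseteq P$, we also have $x \circ y \circ z \subseteq P$. If $z \in P$, then $z \in \sqrt{P \circ M}$ by the computation above and we are done. So assume $z \notin P$. For each $w \in x \circ y$ we then have $w \circ z \subseteq x \circ y \circ z \subseteq P$, and since $P$ is prime and $z \notin P$ this forces $w \in P$; hence $x \circ y \subseteq P$. Applying primeness of $P$ once more to $x \circ y \subseteq P$ gives $x \in P$ or $y \in P$, and since in addition $x, y \in M$, in either case $x \circ y \subseteq P \circ M$. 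This establishes that $I = P \circ M$ is a 1-absorbing primary hyperideal.

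I expect no serious obstacle here: the locality hypothesis is used only to guarantee that the nonunit elements $x, y, z$ lie in $M$ — and this is exactly the feature that makes the weaker ``1-absorbing'' hypothesis usable (it is also why one cannot in general upgrade the conclusion to ``$P \circ M$ is primary''). The only points needing a little care are the set-theoretic bookkeeping with the hyperproducts (passing from $x \circ y \circ z \subseteq P$ to the statement $w \in P$ for each $w \in x \circ y$) and the identification $\sqrt{P \circ M} = P$, both of which are routine once $P$ is known to be a prime, hence radical, hyperideal contained in $M$.
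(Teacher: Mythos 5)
Your proof is correct and follows essentially the same route as the paper's: use locality to place all nonunits in $M$, so that $x\in P$ or $y\in P$ already forces $x\circ y\subseteq P\circ M$, and otherwise invoke primeness of $P$ on $x\circ y\circ z\subseteq P$ to conclude $z\in P=\sqrt{P\circ M}$. Your version is in fact the more carefully written one — in particular you correctly establish $\sqrt{P\circ M}=P$, whereas the paper's proof at one point misstates this radical as $M$.
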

\begin{proof}
Let $x \circ y \circ z \subseteq P\circ M$ for some nonunit
elements $x,y,z \in R$. Since $\sqrt{P\circ M}=M$, then $x \circ y \circ z \subseteq p.$ Let $x \circ y \nsubseteq P \circ M$. Then $x \notin P$ and $y \notin P$  which means $a \circ b \nsubseteq P$ (note that if $x \in P$ or $y \in P$ then $x \circ y \subseteq P \circ M$). Thus $z \in P=\sqrt{P\circ M}$. Consequently, hyperideal $P \circ M$ of $R$ is a 1-absorbing primary.
\end{proof}
\begin{theorem}  \label{311}
Let hyperideal $I$ be a 1-absorbing primary hyperideal  of $R$ and let $a \in R-I$ be a nonunit element of $R$. Then $(I : a)$ is a primary  hyperideal of $R$.
\end{theorem}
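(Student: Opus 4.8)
\emph{Proof proposal.} The plan is to verify the two defining conditions of a primary hyperideal for $J:=(I:a)$: that $J$ is a (nonzero) proper hyperideal, and that $x\circ y\subseteq J$ implies $x\in J$ or $y\in\sqrt{J}$. The first condition is routine: $J$ is a hyperideal by the usual argument, it contains $I$ (hence is nonzero), and it is proper because $e\in J$ would give $e\circ a\subseteq I$ and so $a\in I$, contradicting $a\in R-I$. So the substance is the second condition, and I would handle it by first peeling off the cases where $x$ or $y$ is a unit, before invoking the hypothesis on $I$ (which, crucially, only applies to triples of nonunit elements).

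First I would record the elementary fact that if $K$ is any hyperideal of $R$, $u$ is a unit, and $u\circ w\subseteq K$ for some $w\in R$, then $w\in K$: choosing $u'$ with $e\in u\circ u'$ and using absorption, $w\in e\circ w\subseteq u'\circ u\circ w\subseteq u'\circ K\subseteq K$. Now take $x\circ y\subseteq J$, equivalently $a\circ x\circ y\subseteq I$. If $x$ is a unit, apply this fact with $K=J$, $u=x$, $w=y$ to conclude $y\in J$; if $y$ is a unit, conclude $x\in J$ symmetrically; in both cases the primary condition holds at once. In the remaining case $x$ and $y$ are both nonunit, and since $a$ is also nonunit, $a,x,y$ is a nonunit triple with $a\circ x\circ y\subseteq I$; as $I$ is $1$-absorbing primary, either $a\circ x\subseteq I$, which is precisely $x\in J$, or $y\in\sqrt{I}$. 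In the latter subcase I would use monotonicity of the radical: since $I\subseteq J$ and all hyperideals here are \textbf{C}-hyperideals, $\sqrt{I}=\{r\in R:r^{n}\subseteq I\text{ for some }n\in\mathbb{N}\}\subseteq\sqrt{J}$, so $y\in\sqrt{J}$. This exhausts all cases, so $J=(I:a)$ is a primary hyperideal.

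The main point requiring care is the interplay with units: the $1$-absorbing primary hypothesis can be applied only to triples $a,x,y$ in which all three entries are nonunit, so the unit subcases really must be separated out first, and it is exactly the assumption that $a$ is nonunit — together with the scalar identity $e$, which legitimizes both the "divide by a unit" step and the properness of $J$ — that makes the reduction work. Everything else (that $J$ is a hyperideal, the radical description, the one-line unit lemma) is mechanical.
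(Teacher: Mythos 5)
Your proof is correct and follows essentially the same route as the paper's: apply the $1$-absorbing primary hypothesis to the nonunit triple $a,x,y$ with $a\circ x\circ y\subseteq I$ to get $a\circ x\subseteq I$ (i.e.\ $x\in(I:a)$) or $y\in\sqrt{I}\subseteq\sqrt{(I:a)}$. Your additional handling of the cases where $x$ or $y$ is a unit, and the check that $(I:a)$ is proper, are refinements the paper's proof silently omits but do not change the argument.
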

\begin{proof}
 Let $x \circ y  \subseteq  (I : a)$ for some nonunit elements $x,y \in  R$.  Let $x \notin (I:a)$ . Then $a \circ x \nsubseteq I$. Since $x \circ y  \subseteq   (I : a)$, then $a \circ x \circ y  \subseteq I$. 
Since $I$ is a 1-absorbing primary hyperideal  of $R$ and $a \circ x \nsubseteq I$, then $y \in \sqrt{I} \subseteq \sqrt{(I:a)}$ which means hyperideal $(I:a)$ is  primary. 
\end{proof}
\begin{definition}
 Let $I$ be a 1-absorbing primary hyperideal of $R$. By Theorem \ref{22}, $\sqrt{I}=P$ is a prime hyperideal of $R$. Thus $I$ is called $P$-1-absorbing primary hyperideal of $R$.
\end{definition}
\begin{theorem}  \label{41}
Let  $I_1,...,I_n$ be $P$-1-absorbing primary hyperideals of $R$. Then $\bigcap_{i=1}^nI_i$ is a $P$-1-absorbing primary hyperideal of $R$.
\end{theorem}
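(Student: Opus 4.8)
The plan is to put $J=\bigcap_{i=1}^{n}I_i$ and verify directly that $J$ meets the definition of a $P$-1-absorbing primary hyperideal. First I would note that $J$ is a proper hyperideal of $R$, since $J\subseteq I_1\subsetneq R$, and that $J$ is again a {\bf C}-hyperideal, in accordance with the standing convention of the paper.

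The second step is to show $\sqrt{J}=P$. The inclusion $\sqrt{J}\subseteq P$ is immediate: from $J\subseteq I_i$ we get $\sqrt{J}\subseteq\sqrt{I_i}=P$ for every $i$. For the reverse inclusion, take $a\in P=\sqrt{I_i}$; for each $i$ there is $n_i\in\mathbb{N}$ with $a^{n_i}\subseteq I_i$, and putting $n=\max_i n_i$ and using associativity of $\circ$ we have $a^{n}=a^{n_i}\circ a^{\,n-n_i}\subseteq I_i$ for every $i$, hence $a^{n}\subseteq J$. Thus $a$ belongs to $D=\{r\in R: r^{m}\subseteq J \text{ for some } m\in\mathbb{N}\}\subseteq\sqrt{J}$, so $P\subseteq\sqrt{J}$, and therefore $\sqrt{J}=P$, which in particular is a prime hyperideal of $R$.

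The third step is the 1-absorbing primary condition itself. Let $x,y,z\in R$ be nonunit elements with $x\circ y\circ z\subseteq J$, and suppose $z\notin\sqrt{J}=P$. For each index $i$ we then have $x\circ y\circ z\subseteq I_i$; since $I_i$ is a 1-absorbing primary hyperideal with $\sqrt{I_i}=P$ and $z\notin P$, it follows that $x\circ y\subseteq I_i$. As this holds for all $i$, we conclude $x\circ y\subseteq\bigcap_{i=1}^{n}I_i=J$. Hence $J$ is a $P$-1-absorbing primary hyperideal of $R$.

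I do not anticipate a serious obstacle; the only point needing a little care is the radical identity $\sqrt{\bigcap_i I_i}=\bigcap_i\sqrt{I_i}$, and even there only the inclusion $P\subseteq\sqrt{J}$ requires the short argument above, the reverse inclusion and the absorbing step being purely formal. One could alternatively run an induction on $n$ and reduce to the case $n=2$, but the direct argument handles all $n$ simultaneously.
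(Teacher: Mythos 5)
Your proof is correct and follows essentially the same route as the paper: directly verify the defining condition for the intersection, using that $z\notin P=\sqrt{I_i}$ forces $x\circ y\subseteq I_i$ for each $i$ (the paper phrases this contrapositively, choosing an $i$ with $x\circ y\nsubseteq I_i$). The only difference is that you supply the short argument for $\sqrt{\bigcap_i I_i}=P$, which the paper simply declares clear.
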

\begin{proof}
Put $I=\bigcap_{i=1}^nI_i$. It is clear that $\sqrt{I}=P$. Let for some nonunit elements $x,y,z \in R$, $x \circ y  \circ z \subseteq I$ such that $x \circ y \nsubseteq I$. Then there exist $1 \leq i \leq n$ such that $x \circ y \nsubseteq I_i$. Since $I_i$ is a $P$-1-absorbing primary hyperideal of $R$, then $z \in \sqrt{I_i}=P$ which means $z \in \sqrt{I}$. Consequently, $I=\bigcap_{i=1}^nI_i$ is a $P$-1-absorbing primary hyperideal of $R$.
\end{proof}
\begin{theorem} \label{42} 
Let $R_1$ and $R_2$ be multiplicative hyperrings and $ \phi:R_1 \longrightarrow R_2$ be a good homomorphism  such that if $R_2$ is a local multiplicative  hyperring, then $\phi(x)$ is a nonunit of $R_2$ for
every nonunit $x \in R_1$. Then the following statements hold : 
\begin{itemize}
\item[{(1)}]~ If $I_2$ is a 1- absorbing primary hyperideal of $R_2$, then $\phi^{-1} (I_2)$ is a 1- absorbing primary hyperideal of $R_1$.
\item[{(2)}]~ If $\phi $ is an epimorphism and $I_1$ is a 1- absorbing primary hyperideal of $R_1$ containing $Ker(\phi)$, then $\phi(I_1)$ is a 1- absorbing primary hyperideal of $R_2$.
\end{itemize}
\end{theorem}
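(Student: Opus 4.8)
The plan is to prove (1) and (2) separately, and in each part to branch on whether the relevant hyperring is a local multiplicative hyperring: the hypothesis imposed on $\phi$ is only needed in the local branch, while in the non-local branch Theorem~\ref{25} lets us replace ``1-absorbing primary'' by ``primary'', so the statements become transfer properties of ordinary primary hyperideals. Throughout I will use without comment that $\phi^{-1}(I_2)$ and $\phi(I_1)$ are hyperideals, that $\phi(w^{n})=\phi(w)^{n}$ for a good homomorphism, and hence that $\phi(w)^{n}\subseteq J$ forces $w^{n}\subseteq\phi^{-1}(J)$ and so $w\in\sqrt{\phi^{-1}(J)}$ (recall every hyperideal here is a $\mathbf{C}$-hyperideal, so the description of the radical from Section~2 is available).

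For (1), $\phi^{-1}(I_2)$ is a proper hyperideal of $R_1$. If $R_2$ is not local, then $I_2$ is a primary hyperideal of $R_2$ by Theorem~\ref{25}, and the preimage of a primary hyperideal under a good homomorphism is primary: from $a\circ b\subseteq\phi^{-1}(I_2)$ we get $\phi(a)\circ\phi(b)=\phi(a\circ b)\subseteq I_2$, so $\phi(a)\in I_2$ (whence $a\in\phi^{-1}(I_2)$) or $\phi(b)\in\sqrt{I_2}$ (whence $b\in\sqrt{\phi^{-1}(I_2)}$ by the remark above); since every primary hyperideal is 1-absorbing primary by Theorem~\ref{21}(1), this case is finished. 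If $R_2$ is local, take nonunit $x,y,z\in R_1$ with $x\circ y\circ z\subseteq\phi^{-1}(I_2)$; the hypothesis makes $\phi(x),\phi(y),\phi(z)$ nonunit in $R_2$, and $\phi(x)\circ\phi(y)\circ\phi(z)=\phi(x\circ y\circ z)\subseteq I_2$, so, $I_2$ being 1-absorbing primary, either $\phi(x)\circ\phi(y)=\phi(x\circ y)\subseteq I_2$, giving $x\circ y\subseteq\phi^{-1}(I_2)$, or $\phi(z)\in\sqrt{I_2}$, giving $z\in\sqrt{\phi^{-1}(I_2)}$.

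For (2), since $\phi$ is an epimorphism with $Ker(\phi)\subseteq I_1$, the image $\phi(I_1)$ is a proper hyperideal of $R_2$ (if $\phi(I_1)=R_2$ then $I_1=\phi^{-1}(\phi(I_1))=R_1$, a contradiction). If $R_2$ is not local, then $R_1$ is not local either, because the maximal hyperideals of $R_2$ pull back along $\phi$ to distinct maximal hyperideals of $R_1$ (all containing $Ker(\phi)$); hence $I_1$ is primary by Theorem~\ref{25}, and the forward image along the epimorphism $\phi$ of a primary hyperideal containing $Ker(\phi)$ is primary: if $a'\circ b'\subseteq\phi(I_1)$ with $a'=\phi(a)$, $b'=\phi(b)$, then every element of $a\circ b$ lies in $I_1$ (its $\phi$-image lies in $\phi(I_1)$ and $Ker(\phi)\subseteq I_1$), so $a\circ b\subseteq I_1$; as $I_1$ is primary, $a\in I_1$ or $b\in\sqrt{I_1}$, hence $a'\in\phi(I_1)$ or $b'\in\sqrt{\phi(I_1)}$, and a primary hyperideal is 1-absorbing primary by Theorem~\ref{21}(1). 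If $R_2$ is local, take nonunit $x',y',z'\in R_2$ with $x'\circ y'\circ z'\subseteq\phi(I_1)$ and preimages $x,y,z\in R_1$, which are nonunit since $\phi$ carries units to units; then $\phi(x\circ y\circ z)\subseteq\phi(I_1)$ and $Ker(\phi)\subseteq I_1$ give $x\circ y\circ z\subseteq I_1$, so $x\circ y\subseteq I_1$ or $z\in\sqrt{I_1}$, and applying $\phi$ yields $x'\circ y'\subseteq\phi(I_1)$ or $z'\in\sqrt{\phi(I_1)}$.

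The computations that the relevant sets are hyperideals, and the radical bookkeeping under $\phi$, are routine and I would keep them brief. The point that genuinely needs care is the interaction of $\phi$ with the unit/nonunit dichotomy. In the local branch of (2) one must know that a preimage of a nonunit of $R_2$ can be taken to be a nonunit of $R_1$, equivalently that $\phi$ maps units to units; I would obtain this from $\phi$ being a good epimorphism between hyperrings with identity, so that $\phi$ sends the identity of $R_1$ to that of $R_2$. In the local branch of (1) the standing hypothesis on $\phi$ is exactly what supplies ``nonunit $\mapsto$ nonunit'', and it cannot be dropped. The only other small point is the transfer of non-locality from $R_2$ back to $R_1$ through the correspondence of hyperideals under the epimorphism $\phi$, which costs a line or two but uses no new idea.
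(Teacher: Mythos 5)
Your proof is correct and follows essentially the same route as the paper's: in (1) a case split on whether $R_2$ is local, handling the non-local case by reducing to primary hyperideals via Theorem \ref{25} and the local case by a direct element computation using the standing hypothesis on $\phi$, and in (2) the kernel-plus-$\mathbf{C}$-hyperideal argument that pulls $x_2\circ y_2\circ z_2\subseteq\phi(I_1)$ back to $x_1\circ y_1\circ z_1\subseteq I_1$. The only substantive difference is that you explicitly justify that the preimages $x_1,y_1,z_1$ in (2) can be taken nonunit (a good epimorphism between hyperrings with identity sends units to units), a point the paper's proof passes over silently, while your extra non-local branch in (2) is harmless but not needed.
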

\begin{proof} 
(1) Let $R_2$ be  a local multiplicative  hyperring. Let for some nonunit elements  $x,y,z \in R_1$, $x \circ  y \circ z \subseteq \phi^{-1}(I_2)$. Thus we have $\phi(x \circ y \circ z)=\phi(x) \circ \phi(y) \circ \phi(z) \subseteq I_2$.  Since $I_2$ is a 1- absorbing primary hyperideal of $R_2$, then we conclude that $\phi(x \circ y)=\phi(x) \circ \phi(y) \subseteq I_2$ which implies $x \circ y \subseteq \phi^{-1}(I_2)$ or $\phi(z) \in \sqrt{I_2}$ which implies $z \in \sqrt{\phi^{-1}(I_2)}=\phi^{-1}(\sqrt{I_2}) $. This means  $\phi^{-1}(I_2)$ is a 1-absorbing primary hyperideal  of $R_1$. Now let $I_2$ be  a 1- absorbing primary hyperideal of $R_2$ such that $R_2$ is not   a local multiplicative  hyperring. By Theorem \ref{24}, hyperideal $I_2$ of $R_2$ is  primary.  Then we conclude that $\phi^{-1}(I_2)$ is a primary hyperideal of $R_1$ which means $\phi^{-1}(I_2)$ is a 1-absorbing primary hyperideal of $R_1$.\\
(2) Let for some nonunit elements $x_2,y_2,z_2 \in R_2$,  $x_2 \circ y_2 \circ z_2 \subseteq \phi(I_1)$. Since $\phi$ is an epimorphism, there exist $x_1,y_1,z_1 \in R_1$ such that $\phi(x_1)=x_2, \phi(y_1)=y_2, \phi(z_1)=z_2$ and so $\phi(x_1 \circ y_1 \circ z_1)=x_2 \circ y_2 \circ z_2\subseteq \phi(I_1)$. Now take any $u \in x_1 \circ  y_1 \circ z_1 $. Then we get $\phi(u) \in \phi(x_1 \circ y_1 \circ z_1) \subseteq \phi(I_1)$ and so $\phi(u) = \phi(w)$ for some $w \in I_1$. This implies that $\phi(u-w) = 0 $, that is, $u-w \in Ker(\phi) \subseteq I_1$ and so $u \in I_1$. Since $I_1$ is a ${\bf C}$-hyperideal of $R_1$, then we conclude that $x_1 \circ y_1 \circ z_1\subseteq I_1$. Since $I_1$ is a 1-absorbing primary hyperideal of $R_1$ then $x_1 \circ y_1 \subseteq I_1$ which implies $x_2 \circ y_2 =\phi(x_1 \circ y_1) \subseteq \phi(I_1)$ or $z_1 \in \sqrt{I_1}$ which implies $z_2=\phi(z_1) \in \phi( \sqrt{I_1})= \sqrt{\phi (I_1)}$ .Consequently, $\phi(I_1)$ is a 1-absorbing primary hyperideal  of $R_2$. 
\end{proof}
\begin{corollary}  \label{43}
Let $I, J$ be two proper hyperideals of $R$ with $J \subseteq I$ such that if $R/J$ is a local multiplicative hyperring, then $x + J$ is a nonunit of $R/J$ for every nonunit $x \in R$.
 Then $I$ is a 1-absorbing primary hyperideal of $R$ if and only if $I/J$ is a 1-absorbing primary hyperideal of $R/I$.
\end{corollary}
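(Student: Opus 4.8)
The plan is to derive this statement directly from Theorem \ref{42}, applied to the canonical projection $\pi : R \longrightarrow R/J$ given by $\pi(x) = x + J$. First I would record the standard facts about the quotient: since $J$ is a (\textbf{C}-)hyperideal of $R$, the quotient $R/J$ is again a multiplicative hyperring, $\pi$ is a good epimorphism, and $\mathrm{Ker}(\pi) = J$. Next I would observe that the hypothesis imposed on the pair $(I,J)$ in the corollary is exactly the hypothesis that Theorem \ref{42} demands of the homomorphism: namely, if $R/J$ is a local multiplicative hyperring, then $\pi(x) = x + J$ is a nonunit of $R/J$ for every nonunit $x \in R$. Hence $\pi$ satisfies all the assumptions of Theorem \ref{42}, and in particular of both of its parts.

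For the forward implication, suppose $I$ is a 1-absorbing primary hyperideal of $R$. Because $J \subseteq I$, we have $\mathrm{Ker}(\pi) = J \subseteq I$, so part (2) of Theorem \ref{42} applies and gives that $\pi(I) = I/J$ is a 1-absorbing primary hyperideal of $R/J$. For the converse, suppose $I/J$ is a 1-absorbing primary hyperideal of $R/J$. Then part (1) of Theorem \ref{42} yields that $\pi^{-1}(I/J)$ is a 1-absorbing primary hyperideal of $R$, and it only remains to identify this preimage: one checks that $\pi^{-1}(I/J) = I$ precisely because $J \subseteq I$, since $x \in \pi^{-1}(I/J)$ iff $x + J = i + J$ for some $i \in I$ iff $x - i \in J \subseteq I$ for some $i \in I$ iff $x \in I$. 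Combining the two directions finishes the proof. (I note in passing that the symbol $R/I$ in the displayed statement should read $R/J$.)

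I do not expect a serious obstacle here: the argument is a clean application of Theorem \ref{42}, and the ``hard part,'' such as it is, is purely the bookkeeping around the quotient — verifying that $R/J$ carries a well-defined multiplicative-hyperring structure, that $\pi$ is genuinely a good epimorphism with kernel $J$, and that $\pi$ respects \textbf{C}-hyperideals so that Theorem \ref{42} is literally applicable to $\pi(I)$ and $\pi^{-1}(I/J)$. All of this is routine and already implicit in the machinery set up earlier in the paper, so the corollary follows immediately once these identifications are in place.
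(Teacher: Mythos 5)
Your proposal is correct and follows essentially the same route as the paper: define the canonical projection $\phi:R\to R/J$, note it is a good epimorphism with kernel $J\subseteq I$, and apply Theorem \ref{42}(2) for the forward direction and Theorem \ref{42}(1) together with $\phi^{-1}(I/J)=I$ for the converse. Your version is in fact slightly more careful than the paper's, since you explicitly check that the corollary's hypothesis matches the hypothesis of Theorem \ref{42} and verify the identification $\phi^{-1}(I/J)=I$, and you correctly flag the typo $R/I$ for $R/J$.
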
 
\begin{proof} 
Define $\phi:R \longrightarrow R/J$ by $\phi(r)=r+J$. clearly, $\phi$ is a good epimorphism. Since $Ker (f)=J \subseteq I$ and $I$  is a 1-absorbing primary hyperideal of $R$ , then the claim follows from Theorem \ref{42} (2). Now let $I/J$ be  a 1-absorbing primary hyperideal of $R/J$. Then we conclude that $\phi^{-1}(I/J)=I$ is a 1-absorbing primary hyperideal of $R$ by Theorem \ref{42} (1).
\end{proof}
\begin{theorem} \label{44}
Let the hyperideal $I$ of $R$  be  1-absorbing. If $J$ is a proper hyperideal of $R$ such that $x \circ y \circ J \subseteq I$
for some nonunit elements $x,y \in R$, then $x \circ y \subseteq I$
or $J \subseteq \sqrt{I}$.
\end{theorem}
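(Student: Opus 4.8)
The plan is to argue by contradiction, mirroring the proof of Theorem~\ref{AA1} but with $\sqrt{I}$ in place of $I$ in the conclusion (and reading ``$I$ is $1$-absorbing'' here as ``$I$ is a $1$-absorbing primary hyperideal'', consistently with the section). Suppose $x\circ y\circ J\subseteq I$ for nonunit elements $x,y\in R$ and a proper hyperideal $J$ of $R$, and assume toward a contradiction that $x\circ y\nsubseteq I$ and $J\nsubseteq\sqrt{I}$.

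Since $J\nsubseteq\sqrt{I}$, I would first fix an element $a\in J$ with $a\notin\sqrt{I}$. The one point that needs a short justification is that such an $a$ is automatically a nonunit element of $R$: if $a$ were a unit, there would be $b\in R$ with $e\in a\circ b$, and then $e\in a\circ b\subseteq J$ by the hyperideal axioms applied to $a\in J$, whence $r\in r\circ e\subseteq J$ for every $r\in R$, forcing $J=R$ and contradicting that $J$ is proper. Hence $x,y,a$ are all nonunit, and from $a\in J$ we obtain $x\circ y\circ a\subseteq x\circ y\circ J\subseteq I$.

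Applying the definition of a $1$-absorbing primary hyperideal to $x\circ y\circ a\subseteq I$ now gives either $x\circ y\subseteq I$ or $a\in\sqrt{I}$. Each of these contradicts one of the standing assumptions, so the assumptions cannot both hold; that is, $x\circ y\subseteq I$ or $J\subseteq\sqrt{I}$, which is the claim.

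I do not expect a genuine obstacle here: the argument is the same bookkeeping as in Theorem~\ref{AA1}, and the only subtlety is the remark that a proper hyperideal of $R$ contains no unit, which is precisely what makes the extracted element $a\in J$ admissible in the defining condition of a $1$-absorbing primary hyperideal.
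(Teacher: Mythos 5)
Your proposal is correct and follows essentially the same route as the paper's own proof: assume $x\circ y\nsubseteq I$ and $J\nsubseteq\sqrt{I}$, extract an element of $J$ outside $\sqrt{I}$, and apply the defining condition of a $1$-absorbing primary hyperideal to reach a contradiction. Your extra remark that the extracted element must be a nonunit (since a proper hyperideal contains no unit) is a small but worthwhile detail that the paper leaves implicit.
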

\begin{proof}
Let $x \circ y \circ J \subseteq I$ such that neither $x \circ y \nsubseteq I$
nor $J \nsubseteq \sqrt{I}$. Thus there exists an element $r \in J$ such that $r \notin \sqrt{I}$. It is clear that  $x \circ y \circ r \subseteq I$ but  $x \circ y   \nsubseteq I$ and $r \notin \sqrt{I}$ which is a contradiction, because  $I$  is a 1-absorbing hyperideal of $R$.
\end{proof}
Next, it is proved that a proper hyperideal $I$ of $R$ is 1-absorbing primary if and only if the inclusion  $J \circ K \circ L \subseteq I$ for any proper hyperideals $J,K,L$ of $R$ implies that either $J \circ K \subseteq I$ or $L \subseteq \sqrt{I}$

\begin{theorem} \label{45}
Let I be a proper hyperideal of $R$. Then 
$I$ is a 1-absorbing primary hyperideal of $R$ if and only if 
for any proper hyperideals $J,K,L$ of $R$ with $J \circ K \circ L \subseteq I$ implies that either $J \circ K \subseteq I$ or $L \subseteq \sqrt{I}$.
\end{theorem}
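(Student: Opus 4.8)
The plan is to imitate the argument already used to characterise $1$-absorbing prime hyperideals, simply replacing the conclusion "$\subseteq I$'' by "$\subseteq\sqrt I$'' where appropriate, and using Theorem~\ref{44} as the bridge that upgrades a statement about two nonunit elements to a statement about a whole hyperideal. Both implications are short once that bridge is in place, so the write‑up will be two short arguments plus one remark about the routine points.

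First I would do the forward direction. Assume $I$ is $1$-absorbing primary, let $J,K,L$ be proper hyperideals with $J\circ K\circ L\subseteq I$, and suppose $J\circ K\nsubseteq I$. Since $J\circ K=\bigcup_{j\in J,\,k\in K} j\circ k$, there are $j\in J$ and $k\in K$ with $j\circ k\nsubseteq I$; because $J$ and $K$ are proper, $j$ and $k$ are nonunit elements of $R$ (a principal hyperideal generated by a unit is all of $R$). Now $j\circ k\circ L\subseteq J\circ K\circ L\subseteq I$, so Theorem~\ref{44} applied to the nonunit elements $j,k$ and the proper hyperideal $L$ forces $j\circ k\subseteq I$ or $L\subseteq\sqrt I$; the first possibility is excluded, hence $L\subseteq\sqrt I$. (Alternatively, one can avoid Theorem~\ref{44} entirely: each $\ell\in L$ is nonunit since $L$ is proper, and $j\circ k\circ\ell\subseteq I$ with $j\circ k\nsubseteq I$ gives $\ell\in\sqrt I$ directly from the definition.)

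Next the converse. Assume the hyperideal condition and take nonunit elements $x,y,z\in R$ with $x\circ y\circ z\subseteq I$ and $x\circ y\nsubseteq I$. Put $J=\langle x\rangle$, $K=\langle y\rangle$, $L=\langle z\rangle$, which are proper hyperideals precisely because $x,y,z$ are nonunits. One checks that $J\circ K\circ L\subseteq I$: an arbitrary finite product drawn from these three hyperideals expands, via distributivity, into finite products each of which meets $I$, and since $I$ is a \textbf{C}-hyperideal this upgrades to containment in $I$. As $x\circ y\subseteq J\circ K$ we have $J\circ K\nsubseteq I$, so the hypothesis yields $L\subseteq\sqrt I$ and in particular $z\in L\subseteq\sqrt I$. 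Hence $I$ is $1$-absorbing primary.

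The main obstacle is the passage $x\circ y\circ z\subseteq I\Rightarrow\langle x\rangle\circ\langle y\rangle\circ\langle z\rangle\subseteq I$ in the converse, since a product of three generated hyperideals is a union of many finite products and one must genuinely invoke the standing assumption that every hyperideal is a \textbf{C}-hyperideal to pass from "meets $I$'' to "contained in $I$''. The only other point worth stating explicitly is the elementary fact that a proper hyperideal contains no units, which is what guarantees that the extracted elements $j,k$ (forward direction) and the generators $x,y,z$ (converse) are legitimately nonunit and may therefore be fed into the definition of a $1$-absorbing primary hyperideal and into Theorem~\ref{44}.
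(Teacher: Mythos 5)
Your proof is correct and follows essentially the same route as the paper: for the forward direction you extract nonunit elements $j\in J$, $k\in K$ with $j\circ k\nsubseteq I$ and invoke Theorem~\ref{44} (or the definition directly), and for the converse you pass to the principal hyperideals $\langle x\rangle,\langle y\rangle,\langle z\rangle$. Your added justifications (properness forcing the extracted elements to be nonunit, and the \textbf{C}-hyperideal assumption underlying $\langle x\rangle\circ\langle y\rangle\circ\langle z\rangle\subseteq I$) are points the paper leaves implicit, so nothing further is needed.
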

\begin{proof}
$\Longrightarrow$  Let the hyperideal $I$ of $R$  be  1-absorbing primary. Suppose that  for some proper hyperideals $J,K,L$ of $R$,  $J \circ K \circ L \subseteq I$ such that $J \circ K \nsubseteq I$. This means  there exist nonunit
elements $x \in J$ and $y \in K$ but $x \circ y \nsubseteq I$. Thus we get $x \circ y \circ L \subseteq I$. Since  $I$ is a 1-absorbing primary hyperideal of $R$, then $L \subseteq \sqrt{I}$ by Theorem \ref{44}.\\
$\Longleftarrow$ Let for some nonunit elements $x,y,z \in R$, $x \circ y \circ z \subseteq I$ but $x \circ y \nsubseteq I$. We have $\langle x \rangle \circ \langle y \rangle \circ \langle z \rangle \subseteq I$. Hence $\langle x \rangle \circ \langle y \rangle  \nsubseteq I$. By the assumption we get $\langle z \rangle \subseteq \sqrt{I}$ which means $z \in \sqrt{I}$. Thus the hyperideal $I$ of $R$ is 1-absorbing.
\end{proof}
\section{strongly 1-absorbing primary hyperideals}
\begin{definition} 
Let $I$ be a proper hyperideal of $R$.   $I$ is called a strongly 1-absorbing primary hyperideal  if for nonunit elements $x,y,z \in R$, $x \circ y \circ z \subseteq I$ , then $xoy \subseteq I$ or $z \in  \sqrt{0}$.
\end {definition} 
It is clear that if $I$ is a  strongly 1-absorbing primary hyperideal of $R$ then  $I$ is a 1-absorbing primary
hyperideal.
\begin{example}
Consider the ring $(\{\bar{0},\bar{1},\bar{2},\bar{3}\}=\mathbb{Z},\boxplus,\odot)$ that for each $\bar{x},\bar{y} \in \mathbb{Z}_4$, $\bar{x} \boxplus \bar{y}$ and $\bar{y} \odot \bar{y}$ are the remainder of $\frac{x+y}{4}$ and $\frac{x.y}{4}$, respectively, which $"+"$ and $"."$ are
ordinary addition and multiplication, and $x,y \in \mathbb{Z}$.
\[\begin{tabular}{c|c} 
$\boxplus$ & $\bar{0}$  $\bar{1}$  $\bar{2}$  $\bar{3}$
\\ \hline $\bar{0}$ &  $\bar{0}$  $\bar{1}$  $\bar{2}$ 
 $\bar{3}$ 
\\ $\bar{1}$ & $\bar{1}$  $\bar{2}$  $\bar{3}$  $\bar{0}$ 
\\ $\bar{2}$ & $\bar{2}$ $\bar{3}$  $\bar{0}$  $\bar{1}$ 
\\$\bar{3}$ & $\bar{3}$  $\bar{0}$  $\bar{1}$  $\bar{2}$ 
\end{tabular} \ \ \ \ \ \ \ \ 
\begin{tabular}{c|c} 
$\boxdot$ & $\bar{0}$ \ \ \ \ \ \ \ \ \ \ \   $\bar{1}$ \ \ \  \ \ \ \ \ \ \ \ \ \ \  $\bar{2}$ \ \ \ \ \  \ \ \  \ \  $\bar{3}$
\\ \hline $ \bar{0}$  & $\{\bar{0}\}$\ \ \ \ \ \ \ \ \ \  $\{\bar{0}\}$  \ \ \  \ \ \ \ \  $\{\bar{0}\}$ \ \ \ \ \ \  $\{\bar{0}\}$ 
\\ $\bar{1}$ & $\{\bar{0}\}$ \ \ \ \ \ \ \ \ \  $\mathbb{Z}_4$ \ \ \ \ \ \ $\{\bar{0},\bar{2}\}$ \ \ \ \ \ \ $\mathbb{Z}_4$
\\ $\bar{2}$ & $\{\bar{0}\}$  \ \ \ \ \  $\{\bar{0},\bar{2}\}$ \ \ \ \ \ \ \ $\bar{0}$ \ \ \ \ \ \ \ $\{\bar{0},\bar{2}\}$
\\$\bar{3}$ & $\{\bar{0}\}$ \ \ \ \ \ \ \ $\mathbb{Z}_4$ \ \ \ \ \ \ \ $\{\bar{0},\bar{2}\}$ \ \ \ \ \ \ \ $\mathbb{Z}_4$
\end{tabular}\]
which $(\mathbb{Z}_4,\boxplus,\boxdot)$ is a  multiplicative hyperring.  In the hyperring, hyperideal $\{0,2\}$ is a strongly 1-absorbing primary hyperideal of $\mathbb{Z}_4$.
\end{example} 
\begin{theorem}
If  the hyperideals $I_1$ and $I_2$ of $R$  are strongly 1-absorbing primary then $I_1 \cap I_2$ is a strongly 1-absorbing primary hyperideal of $R$.
\end{theorem}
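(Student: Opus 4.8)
\emph{Proof proposal.} The plan is to argue directly from the definition of strongly 1-absorbing primary, exploiting the fact that \emph{both} hyperideals have the same distinguished radical, namely $\sqrt{0}$, which is what makes the two resulting case analyses line up. Write $I=I_1\cap I_2$. First I would note that $I$ is a proper hyperideal, since $I\subseteq I_1\neq R$, and that $I$ is a hyperideal of $R$ as an intersection of hyperideals (and a ${\bf C}$-hyperideal under the paper's blanket assumption). This disposes of the routine preliminaries.

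Next, take nonunit elements $x,y,z\in R$ with $x\circ y\circ z\subseteq I$, and aim to show that $x\circ y\subseteq I$ or $z\in\sqrt{0}$. Since $I\subseteq I_1$, we have $x\circ y\circ z\subseteq I_1$, and because $I_1$ is strongly 1-absorbing primary this gives $x\circ y\subseteq I_1$ or $z\in\sqrt{0}$. Symmetrically, from $I\subseteq I_2$ we get $x\circ y\subseteq I_2$ or $z\in\sqrt{0}$. If $z\in\sqrt{0}$ in either case, we are done immediately; otherwise $z\notin\sqrt{0}$, so necessarily $x\circ y\subseteq I_1$ \emph{and} $x\circ y\subseteq I_2$, whence $x\circ y\subseteq I_1\cap I_2=I$. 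In all cases the defining condition holds, so $I=I_1\cap I_2$ is strongly 1-absorbing primary.

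There is essentially no obstacle here: the only thing to watch is the logical bookkeeping of the two disjunctions, and the crucial structural point is that the "primary part" of the definition refers to the fixed hyperideal $\sqrt{0}$ rather than to $\sqrt{I_1}$ or $\sqrt{I_2}$ (which in general need not agree). This is precisely why the analogous statement for ordinary 1-absorbing primary hyperideals requires the extra hypothesis $\sqrt{I_1}=\sqrt{I_2}=P$ seen in Theorem~\ref{41}, whereas for the strongly variant no such assumption is needed. I would close by remarking that the same argument extends verbatim to any finite intersection $\bigcap_{i=1}^n I_i$ of strongly 1-absorbing primary hyperideals.
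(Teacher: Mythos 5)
Your proof is correct. The paper states this theorem without giving any proof, and your direct argument --- pass the containment $x\circ y\circ z\subseteq I_1\cap I_2$ down to each $I_i$, apply the definition twice, and observe that if $z\notin\sqrt{0}$ both disjunctions force $x\circ y\subseteq I_1\cap I_2$ --- is exactly the natural one; your closing remark correctly identifies why the shared-radical hypothesis of Theorem~\ref{41} is unnecessary here, namely that the ``primary part'' of the strongly 1-absorbing condition refers to the fixed hyperideal $\sqrt{0}$ rather than to $\sqrt{I_i}$.
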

\begin{theorem} \label{51}
Let $I$ be a proper hyperideal of $R$. Then, $I$ is a strongly 1-absorbing primary hyperideal of $R$ if and only if
$I$ is 1-absorbing primary and $\sqrt{I}=\sqrt{0}$, or
$R$ is a local multiplicative hyperring with maximal hyperideal $M=\sqrt{I}$ such that  $M^2 \subseteq I$.
\end{theorem}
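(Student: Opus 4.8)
The plan is to prove the two implications separately; the forward one splits according to whether or not $\sqrt{I}=\sqrt{0}$. Assume first that $I$ is strongly $1$-absorbing primary. By the remark following the definition it is $1$-absorbing primary, so Theorem~\ref{22} shows $\sqrt{I}$ is a (proper) prime hyperideal. If $\sqrt{I}=\sqrt{0}$ the first alternative of the conclusion already holds, so suppose $\sqrt{I}\neq\sqrt{0}$. My first step is to observe that then $I\nsubseteq\sqrt{0}$: if we had $I\subseteq\sqrt{0}$, applying prime radicals and using $\sqrt{\sqrt{0}}=\sqrt{0}$ together with the general inclusion $\sqrt{0}\subseteq\sqrt{I}$ would force $\sqrt{I}=\sqrt{0}$, a contradiction. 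Hence I may fix an element $w\in I\setminus\sqrt{0}$, which is automatically a nonunit of $R$ because $w\in I$ and $I$ is proper.

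This element $w$ drives the rest of the argument. For any nonunit $a\in R$, since $w\in I$ we get $a\circ w\subseteq I$ and hence, working elementwise, $a\circ a\circ w=a\circ(a\circ w)\subseteq I$; applying the strongly $1$-absorbing primary condition to the nonunit triple $(a,a,w)$ yields $a\circ a\subseteq I$ or $w\in\sqrt{0}$, and the latter is impossible, so $a^{2}\subseteq I$, that is, $a\in\sqrt{I}$. Thus every nonunit of $R$ belongs to $\sqrt{I}$, and since $\sqrt{I}$ is proper it therefore contains every proper hyperideal of $R$; this makes $\sqrt{I}$ the unique maximal hyperideal, so $R$ is local with maximal hyperideal $M=\sqrt{I}$. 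Finally, for arbitrary $a,b\in M$ (both nonunits) the same computation gives $a\circ b\circ w\subseteq I$, whence $a\circ b\subseteq I$ (again because $w\notin\sqrt{0}$); taking the union over $a,b\in M$ we obtain $M^{2}=M\circ M\subseteq I$, which is precisely the second alternative.

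For the converse, take nonunit elements $x,y,z\in R$ with $x\circ y\circ z\subseteq I$. If $I$ is $1$-absorbing primary with $\sqrt{I}=\sqrt{0}$, the definition of $1$-absorbing primary immediately gives $x\circ y\subseteq I$ or $z\in\sqrt{I}=\sqrt{0}$, so $I$ is strongly $1$-absorbing primary. If instead $R$ is local with maximal hyperideal $M=\sqrt{I}$ and $M^{2}\subseteq I$, then the nonunits $x,y$ lie in $M$, so $x\circ y\subseteq M\circ M=M^{2}\subseteq I$; again $I$ is strongly $1$-absorbing primary.

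I expect the crux to be the forward dichotomy: extracting the witness $w\in I\setminus\sqrt{0}$ when $\sqrt{I}\neq\sqrt{0}$, and then bootstrapping from the single containment $a\circ a\circ w\subseteq I$ to the full conclusion that $R$ is local with $M=\sqrt{I}$ and $M^{2}\subseteq I$. The remaining points---passing between elements and the set-valued hyperoperation (for instance justifying $a\circ(a\circ w)\subseteq I$ and $M\circ M=\bigcup_{a,b\in M}a\circ b$), and the fact that a proper hyperideal contains no units---are routine in view of the conventions recalled in Section~2, in particular that all hyperideals are $\mathbf{C}$-hyperideals.
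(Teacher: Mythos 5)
Your proof is correct and follows essentially the same route as the paper's: both reduce the forward direction (when $\sqrt{I}\neq\sqrt{0}$) to showing that $x\circ y\subseteq I$ for all nonunits $x,y$, by feeding an element of $I$ lying outside $\sqrt{0}$ into the third slot of the strongly $1$-absorbing primary condition, and the converse is handled identically. If anything, your use of a fixed witness $w\in I\setminus\sqrt{0}$ is slightly cleaner than the paper's argument by contradiction, which jumps from $I\subseteq\sqrt{0}$ to ``$I=\sqrt{0}$''; the correct conclusion there is $\sqrt{I}=\sqrt{0}$, which is exactly the observation you make explicit.
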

\begin{proof}
$\Longrightarrow$  It is clear that if $I$ is a  strongly 1-absorbing primary hyperideal of $R$ then  $I$ is a 1-absorbing primary
hyperideal. Now, let $\sqrt{I} \neq \sqrt{0}$. Suppose that for some $x, y \in R$, $x \circ y \nsubseteq I$. We get $x \circ y \circ z \subseteq I$ for each $z \in I$. Since hyperideal $I$ is  strongly 1-absorbing primary, then we obtain $z \in \sqrt{0}$ which implies  $I \subseteq \sqrt{0}$ which means $I = \sqrt{0}$ which is a contradiction. Thus for each nonunit elements  $x, y \in R$, $x \circ y \subseteq I$. Let hyperideal $M$ of $R$ be
 maximal. Then $M^2 \subseteq I$ which implies $\sqrt{M^2} \subseteq \sqrt{I}$ which means $M \subseteq \sqrt{I}$. Therefore for each maximal hyperideal $M$ of $R$, 
$M=\sqrt{I}$. Consequently, $R$ is a local multiplicative hyperring.\\
$\Longleftarrow$ Let $I$ be 1-absorbing primary and $\sqrt{I}=\sqrt{0}$. Clearly hyperideal $I$ is trongly 1-absorbing primary. Let $R$ is a local multiplicative hyperring with maximal hyperideal $M=\sqrt{I}$ such that  $M^2 \subseteq I$. Then we have $x \circ y \subseteq M^2 \subseteq I$ for each nonunit elements $x,y \in R$. Then hyperideal $I$ of $R$  is  strongly 1-absorbing primary.
\end{proof}
In view of Theorem \ref{51}, we have the following results.
\begin{corollary}  \label{52}
Let the hyperideal $P$ of $R$  be prime. Then, $P$ is  strongly 1-absorbing primary if and only if $P=\sqrt{0}$ or 
$R$ is a local multiplicative hyperring with maximal hyperideal $P$.
\end{corollary}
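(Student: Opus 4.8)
The plan is to deduce this directly from Theorem~\ref{51}, using only two elementary observations about a prime hyperideal $P$. First, $\sqrt P = P$: indeed $\sqrt P$ is by definition the intersection of all prime hyperideals of $R$ containing $P$, and $P$ is one of them, so $\sqrt P\subseteq P$, while $P\subseteq\sqrt P$ holds for every hyperideal. Second, $P$ is itself a $1$-absorbing primary hyperideal of $R$; I would verify this by hand: if $x\circ y\circ z\subseteq P$ for nonunit $x,y,z\in R$ with $z\notin P$, then for each $a\in x\circ y$ we have $a\circ z\subseteq P$, hence $a\in P$ by primeness of $P$, and therefore $x\circ y\subseteq P$. (Equivalently, every prime hyperideal is primary, so this is a special case of Theorem~\ref{21}(1); I prefer the direct argument so that the ``nonzero'' clause in the definition of primary plays no role, which lets us also cover the case $P=\{0\}$.)

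For the forward implication, assume $P$ is strongly $1$-absorbing primary and apply Theorem~\ref{51} with $I=P$. Then either (i) $P$ is $1$-absorbing primary and $\sqrt P=\sqrt 0$, or (ii) $R$ is a local multiplicative hyperring with maximal hyperideal $M=\sqrt P$ and $M^2\subseteq P$. In case (i), the identity $\sqrt P=P$ forces $P=\sqrt 0$. In case (ii), $M=\sqrt P=P$, so $R$ is local with maximal hyperideal $P$. In either case the asserted conclusion holds.

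For the converse, suppose first $P=\sqrt 0$. Then $\sqrt P=P=\sqrt 0$, and $P$ is $1$-absorbing primary by the observation above, so the ``$\Longleftarrow$'' direction of Theorem~\ref{51} (first alternative) yields that $P$ is strongly $1$-absorbing primary. Suppose instead $R$ is a local multiplicative hyperring with maximal hyperideal $P$. Put $M=P$; then $M=\sqrt P$ and $M^2=P^2\subseteq P$ because $P$ is a hyperideal, so the ``$\Longleftarrow$'' direction of Theorem~\ref{51} (second alternative) again gives that $P$ is strongly $1$-absorbing primary.

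There is essentially no hard step here: the corollary is bookkeeping on top of Theorem~\ref{51}. The only points needing a moment of care are the identification $\sqrt P=P$ and, in the degenerate case $P=\{0\}$ (i.e.\ $R$ an integral hyperdomain), making sure the $1$-absorbing primary property is checked directly rather than quoted from the primary-hyperideal results, which are nominally stated only for nonzero hyperideals.
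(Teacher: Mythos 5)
Your proposal is correct and matches the paper's intended route: the paper states this corollary without proof as an immediate consequence of Theorem~\ref{51}, and your argument is exactly that deduction, supplemented by the two routine facts $\sqrt{P}=P$ and $P^2\subseteq P$ for a prime hyperideal $P$. Your extra care in verifying directly that a prime hyperideal is $1$-absorbing primary (so that the ``nonzero'' hypothesis in the paper's definition of primary hyperideal causes no trouble when $P=\{0\}$) is a sensible refinement but does not change the approach.
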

\begin{corollary}  \label{53}
Let $R$ is a local multiplicative hyperring with maximal hyperideal $M$ and let $P$ be a prime hyperideal of $R$. Then hyperideal $P \circ M$ of $R$ is  strongly 1-absorbing primary if and only if $P=
\sqrt{0}$ or $P=M$.
\end{corollary}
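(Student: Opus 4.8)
The plan is to derive the corollary directly from Theorem \ref{51}, once the radical $\sqrt{P\circ M}$ has been identified. First I would compute $\sqrt{P\circ M}=P$. Since $R$ is local with unique maximal hyperideal $M$ and $P$ is a proper hyperideal, we have $P\subseteq M$, hence $P\circ M\subseteq P$, and therefore $\sqrt{P\circ M}\subseteq\sqrt{P}=P$; here $\sqrt{P}=P$ because $P$ is prime, and we use that all hyperideals are ${\bf C}$-hyperideals so that $\sqrt{\cdot}$ is exactly the set of elements some power of which lies in the hyperideal. For the reverse inclusion, given $p\in P$ we also have $p\in M$, so $p^{2}=p\circ p\subseteq P\circ M$, whence $p\in\sqrt{P\circ M}$. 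Thus $\sqrt{P\circ M}=P$.

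For the forward implication, assume $P\circ M$ is strongly 1-absorbing primary. By Theorem \ref{51} one of two things happens: either $\sqrt{P\circ M}=\sqrt{0}$, or $R$ is local with maximal hyperideal equal to $\sqrt{P\circ M}$. In the first case the radical computation gives $P=\sqrt{0}$. In the second case the radical computation shows that $P$ is a maximal hyperideal of $R$, and since $R$ is local and hence has $M$ as its only maximal hyperideal, this forces $P=M$.

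For the converse, if $P=\sqrt{0}$, then $P\circ M$ is 1-absorbing primary by Theorem \ref{310} (applicable since $P\subseteq M$) and $\sqrt{P\circ M}=P=\sqrt{0}$, so the first alternative of Theorem \ref{51} is satisfied and $P\circ M$ is strongly 1-absorbing primary. If $P=M$, then $P\circ M=M^{2}$, and $\sqrt{P\circ M}=M$ with $M$ the maximal hyperideal of the local hyperring $R$ and $M^{2}\subseteq P\circ M$ trivially, so the second alternative of Theorem \ref{51} is satisfied and again $P\circ M$ is strongly 1-absorbing primary. The whole argument is bookkeeping on top of Theorems \ref{51} and \ref{310}; the only step that calls for any care is the radical identity $\sqrt{P\circ M}=P$, and within it the appeal to the ${\bf C}$-hyperideal hypothesis and to primeness of $P$ to get $\sqrt{P}=P$ and to bound $\sqrt{P\circ M}$ from above.
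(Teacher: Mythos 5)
Your proof is correct and follows essentially the same route as the paper: reduce to Theorem \ref{51} after identifying $\sqrt{P\circ M}=P$. The only (welcome) differences are that you make the radical computation explicit, which the paper leaves implicit, and you invoke the paper's own Theorem \ref{310} for 1-absorbing primariness where the paper cites an external result.
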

\begin{proof}
By Theorem 8 in \cite{bad}, the hyperideal $P \circ M$ of $R$ is  1-absorbing primary.   By Theorem \ref{51},  the hyperideal $P \circ M$ of $R$ is  strongly 1-absorbing primary if and only if  $\sqrt{P \circ M}=\sqrt{0}$ or $\sqrt{P \circ M}=M$ and $M^2 \subseteq P \circ M$ which implies $P=\sqrt{0}$ or $P=M$.
\end{proof}
\begin{theorem} \label{54}
Let $R$ be a multiplicative hyperring. Then, there exists a strongly 1-absorbing primary hyperideal of $R$ if and only if $\sqrt{0}$ is a prime hyperideal or $R$ is a local multiplicative hyperring.
\end{theorem}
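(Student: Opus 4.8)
The plan is to read both implications directly off the characterization of strongly $1$-absorbing primary hyperideals in Theorem \ref{51}, using Theorem \ref{22} to close the forward direction and Corollary \ref{52} to produce the required examples in the reverse one.

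For necessity, I would start from a strongly $1$-absorbing primary hyperideal $I$ of $R$ and invoke Theorem \ref{51}: at least one of the following holds --- $R$ is a local multiplicative hyperring, or $I$ is a $1$-absorbing primary hyperideal with $\sqrt{I}=\sqrt{0}$. In the first case the conclusion already holds. In the second case I would apply Theorem \ref{22}, by which the radical of a $1$-absorbing primary hyperideal is prime; hence $\sqrt{0}=\sqrt{I}$ is a prime hyperideal of $R$. Either way $\sqrt{0}$ is prime or $R$ is local.

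For sufficiency, I would argue by cases. If $\sqrt{0}$ is a prime hyperideal, I would apply Corollary \ref{52} to the prime hyperideal $P=\sqrt{0}$: the alternative $P=\sqrt{0}$ holds trivially, so the corollary yields that $\sqrt{0}$ is a strongly $1$-absorbing primary hyperideal of $R$. If instead $R$ is a local multiplicative hyperring with maximal hyperideal $M$, then $M$ is a prime hyperideal, and $R$ is tautologically a local multiplicative hyperring with maximal hyperideal $M$, so Corollary \ref{52} applied with $P=M$ exhibits $M$ itself as a strongly $1$-absorbing primary hyperideal of $R$. (Alternatively $M\circ M$ works: it is a proper hyperideal with $\sqrt{M\circ M}=M$ and $M\circ M\subseteq M\circ M$, hence strongly $1$-absorbing primary by Theorem \ref{51}, or by Corollary \ref{53} with $P=M$.)

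The necessity half presents essentially no obstacle --- it is a two-step deduction from Theorems \ref{51} and \ref{22}. In the sufficiency half the only point needing care is the appeal, in the local case, to the fact that a maximal hyperideal of $R$ is prime, which is exactly what makes Corollary \ref{52} (or Corollary \ref{53}) applicable with $P=M$; granting that, the argument is just a matter of quoting the earlier results.
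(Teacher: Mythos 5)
Your proposal is correct and follows essentially the same route as the paper: necessity via Theorem \ref{51} combined with Theorem \ref{22}, and sufficiency via Corollary \ref{52} applied to $P=\sqrt{0}$ in one case and to the maximal hyperideal $M$ in the other. Your extra remark that one must know $M$ is prime to invoke Corollary \ref{52} (a point the paper passes over silently) is a fair observation, but it does not change the argument.
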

\begin{proof}
$\Longrightarrow$ Let  $I$ be a strongly 1-absorbing primary hyperideal of $R$. Suppose that $R$ is not a local multiplicative hyperring. Then $\sqrt{I}=\sqrt{0}$ by Theorem \ref{51}. Since $I$ is  a strongly 1-absorbing primary hyperideal of $R$, then $\sqrt{0}$  is a prime hyperideal by Theorem \ref{22}.\\
$\Longleftarrow$ Let $R$ is a local multiplicative hyperring with maximal hyperideal $M$, Then $M$  is a  strongly 1-absorbing primary by Corollary \ref{52}. On the other hand, if $\sqrt{0}$ is a prime hyperideal or $R$, then by Corollary \ref{52}, $\sqrt{0}$ is a  strongly 1-absorbing primary of $R$.
\end{proof}
\begin{corollary} \label{55}
If $R_1$ and $R_2$ are two multiplicative hyperrings. Then, $R=R_1 \times R_2$ has no strongly
1-absorbing primary hyperideal.
\end{corollary}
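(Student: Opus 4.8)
The plan is to deduce everything from Theorem~\ref{54}. That theorem says a multiplicative hyperring admits a strongly 1-absorbing primary hyperideal precisely when either its nilradical is prime or it is local; so to prove that $R=R_1\times R_2$ has no such hyperideal it suffices to check that $R$ fails \emph{both} alternatives, i.e. that $R$ is not a local multiplicative hyperring and that $\sqrt{0_R}$ is not a prime hyperideal of $R$. (Implicitly one takes $R_1,R_2$ nonzero, with scalar identities $e_1,e_2$; otherwise $R$ is isomorphic to a single factor and the claim is vacuous or false.) So the proof is essentially two independent verifications plus one citation.

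First I would show $R$ is not local. Consider the subsets $\mathfrak{a}=R_1\times\{0\}$ and $\mathfrak{b}=\{0\}\times R_2$: both are hyperideals of $R$ (closure under subtraction is componentwise, and absorption uses that $0$ is multiplicatively absorbing, so $s_2\circ 0=\{0\}$), and both are proper because $R_2\neq\{0\}$, resp.\ $R_1\neq\{0\}$. In the abelian group $(R,+)$ we have $\mathfrak{a}+\mathfrak{b}=R$, since $(r_1,r_2)=(r_1,0)+(0,r_2)$. If $R$ were local with maximal hyperideal $M$, then every proper hyperideal of $R$ lies in $M$; in particular $\mathfrak{a}\subseteq M$ and $\mathfrak{b}\subseteq M$, whence $R=\mathfrak{a}+\mathfrak{b}\subseteq M$, contradicting the properness of $M$. (Equivalently: $(e_1,0)$ and $(0,e_2)$ are nonunits whose sum is the scalar identity $(e_1,e_2)$, a unit, which no proper hyperideal can contain.)

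Next I would show $\sqrt{0_R}$ is not prime. Since all hyperideals are assumed to be $\mathbf{C}$-hyperideals, $\sqrt{0_R}$ is exactly the set of nilpotent elements of $R$, and because powers in $R_1\times R_2$ are computed componentwise, an element $(a,b)$ is nilpotent iff both $a$ and $b$ are, so $\sqrt{0_R}=\sqrt{0_{R_1}}\times\sqrt{0_{R_2}}$. Now $(e_1,0)\circ(0,e_2)\subseteq\{(0,0)\}\subseteq\sqrt{0_R}$ (again using that $0$ is absorbing), yet neither factor lies in $\sqrt{0_R}$: using the scalar identity one gets $(e_1,0)^n=\{(e_1,0)\}\neq\{(0,0)\}$ for every $n$, since $e_1\neq 0$ (if $e_1=0$ then $a=a\circ e_1=a\circ 0=0$ for all $a\in R_1$, forcing $R_1=\{0\}$), and symmetrically for $(0,e_2)$. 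Hence $\sqrt{0_R}$ is not prime, and combining this with the previous paragraph, Theorem~\ref{54} gives that $R_1\times R_2$ has no strongly 1-absorbing primary hyperideal. The only points needing care are the two hyperring-specific facts used above — that a proper hyperideal of a local hyperring sits inside its maximal hyperideal (equivalently, that the ``coordinate'' elements $(e_i,0)$ are nonunits) and that $0$ is multiplicatively absorbing so the cross-term $(e_1,0)\circ(0,e_2)$ collapses to $\{(0,0)\}$; I expect pinning down the former to be the main obstacle, the rest being routine bookkeeping.
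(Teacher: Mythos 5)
Your proof follows exactly the paper's route: the paper's own proof is a one-liner asserting that $\sqrt{0_R}=\sqrt{0_{R_1}}\times\sqrt{0_{R_2}}$ is not prime and that $R_1\times R_2$ is not local, then citing Theorem~\ref{54}. You prove the same two facts (in more detail than the paper does) and invoke the same theorem, so the approach is essentially identical and correct.
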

\begin{proof}
Since $\sqrt{0_R}=\sqrt{0_{R_1}} \times \sqrt{0_{R_2}}$ is
not prime hyperideal in $R$ and $R$ is not a local multiplicative hyperring, then $R$ has no strongly
1-absorbing primary hyperideal by Theorem \ref{54}.
\end{proof}
\begin{theorem} \label{56}
The hyperideal $I$ of $R$ is  strongly 1-absorbing primary if and only if whenever for any proper hyperideal $J,K,L$ of $R$ with $J \circ K \circ L \subseteq I$ implies that either $J \circ K \subseteq I$ or $ L \subseteq  \sqrt{0}$
\end{theorem}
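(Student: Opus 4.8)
The plan is to mimic the proofs of Theorems \ref{44} and \ref{45}, replacing $\sqrt{I}$ by $\sqrt{0}$ throughout. First I would isolate the elementwise/ideal hybrid statement that does the real work: \emph{if $I$ is strongly 1-absorbing primary, $x,y$ are nonunit elements of $R$ with $x\circ y\nsubseteq I$, and $J$ is a proper hyperideal of $R$ with $x\circ y\circ J\subseteq I$, then $J\subseteq\sqrt{0}$.} To see this, suppose to the contrary that some $r\in J$ satisfies $r\notin\sqrt{0}$. Since $J$ is proper it contains no unit, so $r$ is nonunit; then $x\circ y\circ r\subseteq I$ with $x\circ y\nsubseteq I$ and $r\notin\sqrt{0}$ contradicts the defining property of a strongly 1-absorbing primary hyperideal. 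Hence $J\subseteq\sqrt{0}$.

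For the forward implication of the theorem, assume $I$ is strongly 1-absorbing primary and let $J,K,L$ be proper hyperideals with $J\circ K\circ L\subseteq I$ and $J\circ K\nsubseteq I$. Then there are $x\in J$ and $y\in K$ with $x\circ y\nsubseteq I$, and both $x$ and $y$ are nonunit because $J$ and $K$ are proper. Since $x\circ y\circ L\subseteq J\circ K\circ L\subseteq I$, the hybrid statement above yields $L\subseteq\sqrt{0}$, as required.

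For the converse, assume the displayed ideal condition and take nonunit elements $x,y,z\in R$ with $x\circ y\circ z\subseteq I$ and $x\circ y\nsubseteq I$. Put $J=\langle x\rangle$, $K=\langle y\rangle$, $L=\langle z\rangle$; these are proper hyperideals since $x,y,z$ are nonunit. Then $J\circ K\circ L\subseteq I$, while $J\circ K\nsubseteq I$ because $x\circ y\subseteq J\circ K$. By hypothesis $L\subseteq\sqrt{0}$, so $z\in\sqrt{0}$, and therefore $I$ is strongly 1-absorbing primary.

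The only genuinely delicate point is the observation that a proper hyperideal of $R$ contains no unit (if it did, it would absorb the identity and hence equal $R$) — this is what licenses the conclusion that the witnessing elements $x,y,r$ are nonunit, so that the definition of ``strongly 1-absorbing primary'' actually applies. Everything else is a direct transcription of the arguments already used for Theorems \ref{44} and \ref{45}, together with the trivial inclusion $x\circ y\subseteq\langle x\rangle\circ\langle y\rangle$ that converts $x\circ y\nsubseteq I$ into $\langle x\rangle\circ\langle y\rangle\nsubseteq I$.
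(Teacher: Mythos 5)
Your proof is correct and follows essentially the same route as the paper's: for the forward direction, extract witnesses $x\in J$, $y\in K$ with $x\circ y\nsubseteq I$ and apply the definition to $x\circ y\circ c\subseteq I$ for each $c\in L$; for the converse, pass to the principal hyperideals $\langle x\rangle,\langle y\rangle,\langle z\rangle$. The only difference is that you isolate the intermediate statement as an explicit lemma and record the (correct) observation that elements of proper hyperideals are nonunits, which the paper leaves tacit.
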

\begin{proof}
$\Longrightarrow$ Let for some proper hyperideal $J,K,L$ of $R$,  $J \circ K \circ L \subseteq I$ but $J \circ K \nsubseteq I$. This means there exist $a \in J$ and $b \in K$ such that $a \circ b \nsubseteq I$. It is clear that for each $c \in L$ , $a \circ b \circ c \subseteq I$. Since $I$ is a  strongly 1-absorbing primary hyperideal of $R$ and $a \circ b \nsubseteq I$, then we get $c \in \sqrt{0}$ which implies  $L \subseteq  \sqrt{0}$.\\
$\Longleftarrow$ Suppose that $x \circ y \circ z \subseteq I$ for some nonunit elements $x,y,z \in R$ but $x \circ y \nsubseteq I$. It is clear that $\langle x \rangle  \circ \langle y \rangle \circ \langle z \rangle \subseteq I$. Since  $\langle x \rangle  \circ \langle y \rangle \nsubseteq I$, then we have  $\langle z \rangle\subseteq \sqrt{0}$ which implies  $z \in \sqrt{0}$. Thus, the hyperideal $I$ of $R$ is  strongly 1-absorbing primary.
\end{proof}

\begin{theorem} \label{57}
The hyperideal $\langle 0 \rangle$ is the only strongly 1-absorbing
primary hyperideal of $R$ if and only if $R$ is a hyperfield or a non local integral hyperdomain.
\end{theorem}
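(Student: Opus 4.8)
The plan is to derive everything from Theorem~\ref{22}, Corollary~\ref{52} and Theorem~\ref{51}, the only case distinction being whether or not $R$ is local.

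For the ``if'' direction, suppose first that $R$ is a hyperfield. Then every nonzero element is a unit, so $R$ has no nonzero proper hyperideal; in particular $\langle 0\rangle$ is the unique maximal hyperideal, so $R$ is local, and $\langle 0\rangle$ is prime (if $x\circ y\subseteq\langle 0\rangle$ with $x\neq 0$, multiply by an inverse of $x$ to get $y=0$). By Corollary~\ref{52} the hyperideal $\langle 0\rangle$ is strongly 1-absorbing primary, and being the only proper hyperideal it is the only such. Now suppose $R$ is a non-local integral hyperdomain. Then $x\circ y\subseteq\langle 0\rangle$ forces $0\in x\circ y$, hence $x=0$ or $y=0$; thus $\langle 0\rangle$ is prime and $\sqrt 0=\langle 0\rangle$, so $\langle 0\rangle$ is strongly 1-absorbing primary by Corollary~\ref{52}. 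For uniqueness, let $I$ be any strongly 1-absorbing primary hyperideal of $R$; since $R$ is not local, Theorem~\ref{51} places us in its first alternative, whence $\sqrt I=\sqrt 0=\langle 0\rangle$, and $I\subseteq\sqrt I$ gives $I=\langle 0\rangle$.

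For the ``only if'' direction, assume $\langle 0\rangle$ is the only strongly 1-absorbing primary hyperideal of $R$. It is in particular $1$-absorbing primary, so $\sqrt 0=\sqrt{\langle 0\rangle}$ is prime by Theorem~\ref{22}; then Corollary~\ref{52} (with $P=\sqrt 0$, using its first alternative $\sqrt 0=\sqrt 0$) shows $\sqrt 0$ is itself strongly 1-absorbing primary, so the hypothesis forces $\sqrt 0=\langle 0\rangle$. Hence $R$ is reduced and $\langle 0\rangle$ is prime; and since every hyperideal is a ${\bf C}$-hyperideal, $0\in x\circ y$ gives $x\circ y\cap\langle 0\rangle\neq\varnothing$ with $x\circ y\in{\bf C}$, hence $x\circ y\subseteq\langle 0\rangle$ and primeness yields $x=0$ or $y=0$; thus $R$ is an integral hyperdomain. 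If $R$ is not local we are done. If $R$ is local with maximal hyperideal $M$, then $M$ is prime, hence strongly 1-absorbing primary by Corollary~\ref{52}, so the hypothesis forces $M=\langle 0\rangle$; therefore the only hyperideals of $R$ are $\langle 0\rangle$ and $R$, and it remains only to conclude that $R$ is a hyperfield.

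The main obstacle is exactly this last step: showing that a (commutative, unital) multiplicative hyperring whose only hyperideals are $\langle 0\rangle$ and $R$ is a hyperfield. The subtlety is that for $x\neq 0$ the equality $\langle x\rangle=R$ only puts the identity in a finite \emph{sum} of elements of $R\circ x$, and one-sided distributivity does not let one immediately collapse this to a single $r$ with $e\in r\circ x$. I would resolve it via the description of the maximal hyperideal of a local multiplicative hyperring as its set of nonunit elements (as in \cite{amer2}): since here $M=\langle 0\rangle$, every nonzero element is a unit, so $R$ is a hyperfield. Alternatively, under the standing assumptions of scalar identity and strong distributivity used elsewhere in the paper, the collapse $\sum r_i\circ x=(\sum r_i)\circ x$ is available directly.
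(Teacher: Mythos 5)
Your proof is correct and follows essentially the same route as the paper's: apply Corollary \ref{52} to the maximal hyperideal in the local case and to $\sqrt{0}$ in the non-local case, with Theorem \ref{51} handling uniqueness. You supply more detail than the paper does --- in particular the converse direction, which the paper dismisses as ``Clearly,'' and the final step that $M=\langle 0\rangle$ forces $R$ to be a hyperfield, which the paper asserts without the justification (maximal hyperideal $=$ set of nonunits) that you correctly identify as the real content there.
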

\begin{proof}
$\Longrightarrow$ Let $R$ is a local multiplicative hyperring with maximal hyperideal $M$. Since hyperideal $M$ is 
strongly 1-absorbing primary, then $M=\langle 0 \rangle$ which means $R$ is a hyperfield. Suppose that $R$ is a non local. Since hyperideal $\sqrt{0}$ is  strongly 1-absorbing primary, then we have $\sqrt{0}=\langle 0 \rangle $ is a prime hyperideal which means $R$ is a integral hyperdomain.\\
$\Longleftarrow$ Clearly, If $R$ is a hyperfield or a non local integral hyperdomain then hyperideal $\langle 0 \rangle $ is the only strongly 1-absorbing primary hyperideal of $R$.
\end{proof}
\begin{lem} \label{58}
Let the hyperideal $I$ of $R$  be  1-absorbing primary  and let $J$ be a proper hyperideal of $R$ such that $J \nsubseteq  I$. Then  $(I : J)$ is a primary hyperideal of $R$.
\end{lem}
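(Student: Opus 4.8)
The plan is to verify directly that $(I:J)=\{r\in R:\ r\circ J\subseteq I\}$ satisfies the definition of a primary hyperideal of $R$. The preliminary bookkeeping is routine: using the distributivity and associativity axioms together with the fact that $I$ is a hyperideal, one checks that $(I:J)$ is a hyperideal of $R$, that it is proper (since $J\nsubseteq I$ forces $e\notin(I:J)$), and that $I\subseteq(I:J)$, whence $\sqrt{I}\subseteq\sqrt{(I:J)}$ because every prime hyperideal of $R$ containing $(I:J)$ also contains $I$. I also record that every element of a proper hyperideal is a nonunit, so in particular every element of $J$ is a nonunit. With these in hand, the heart of the proof is to imitate the argument of Theorem~\ref{311}, the element-wise hypothesis ``$a\in R-I$'' there being replaced by a suitably chosen witness sitting inside $J$.

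To establish the primary condition, suppose $x\circ y\subseteq(I:J)$ for some $x,y\in R$. If $x$ or $y$ is a unit, then multiplying by an inverse and using that $(I:J)$ is a hyperideal places $y$, respectively $x$, in $(I:J)\subseteq\sqrt{(I:J)}$, and we are done; so assume $x,y$ are nonunits and, aiming to apply the definition of a primary hyperideal, assume in addition that $x\notin(I:J)$. Then $x\circ J\nsubseteq I$, so, since $x\circ J=\bigcup_{a\in J}x\circ a$, there is an element $a\in J$ with $x\circ a\nsubseteq I$; by the remark above $a$ is a nonunit. Because $a\in J$ and $x\circ y\subseteq(I:J)$, every $w\in x\circ y$ satisfies $w\circ a\subseteq I$, i.e.\ $x\circ y\circ a\subseteq I$, and by commutativity and associativity this reads $x\circ a\circ y\subseteq I$. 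Applying the hypothesis that $I$ is 1-absorbing primary to the nonunit elements $x,a,y$ gives either $x\circ a\subseteq I$ or $y\in\sqrt{I}$. The first is impossible by the choice of $a$, so $y\in\sqrt{I}\subseteq\sqrt{(I:J)}$. Thus $x\notin(I:J)$ implies $y\in\sqrt{(I:J)}$, which is exactly the defining property of a primary hyperideal, and therefore $(I:J)$ is a primary hyperideal of $R$.

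I expect the only genuine subtlety to be the single step where the ideal-level hypothesis $x\notin(I:J)$ is converted into an element-level one: one must extract an $a\in J$ that is simultaneously a nonunit (so that the 1-absorbing primary condition applies to the triple $x,a,y$) and a witness to $x\circ a\nsubseteq I$ (so that the unwanted branch of that condition is ruled out). Both requirements are met automatically once one observes that proper hyperideals consist of nonunits and that a hyperproduct $x\circ J$ is the union of the sets $x\circ a$ over $a\in J$. Everything else — the hyperideal axioms for $(I:J)$, the monotonicity $\sqrt{I}\subseteq\sqrt{(I:J)}$, the reordering of the three factors, and the unit cases — is routine. As an alternative, one could run the same computation with the principal hyperideals $\langle x\rangle$, $\langle a\rangle$, $\langle y\rangle$ and invoke Theorem~\ref{45} in place of the element-wise definition of a 1-absorbing primary hyperideal, but the version given above is more direct.
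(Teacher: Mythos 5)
Your proof is correct and takes essentially the same route as the paper's: given $x\circ y\subseteq(I:J)$ with $x\notin(I:J)$, extract a witness $z\in J$ with $x\circ z\nsubseteq I$, note $x\circ y\circ z\subseteq I$, and apply the 1-absorbing primary condition to conclude $y\in\sqrt{I}\subseteq\sqrt{(I:J)}$. You are merely more explicit than the paper about the unit cases and about why the witness is a nonunit, which is a welcome but not essential refinement.
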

\begin{proof}
Let for some $x,y \in R$, $x \circ y \subseteq (I:J)$ but $x \notin (I:J)$. Hence $y $ is a nonunit elements of $R$. The element $x$ is a nonunit element of $R$ as well. Since $x \notin (I:J)$, then for some $z \in J$, $x \circ z \nsubseteq I$. Since $I$ is a 1-absorbing primary hyperideal of $R$ and $x \circ y \circ z \subseteq I$, then we have $y \in \sqrt{I} \subseteq \sqrt{(I:J)}$ which means $(I : J)$ is a primary hyperideal of $R$.
\end{proof}
\begin{theorem} \label{59}
Let the hyperideal $I$ of $R$  be  1-absorbing primary  and let $J$ be a proper hyperideal of $R$ such that $J \nsubseteq  \sqrt{I}$. Then  hyperideal $(I : J)$ of $R$ is  strongly 1-absorbing  primary.
\end{theorem}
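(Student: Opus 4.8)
The plan is to reduce everything to the structure results already established — principally Theorem \ref{51} and Theorem \ref{56} — feeding in the hypothesis on $I$ through the ideal-theoretic criterion of Theorem \ref{45}. First I would record some preliminaries about $Q := (I:J)$. Since $J \nsubseteq \sqrt I$ we in particular have $J \nsubseteq I$, so $Q$ is a proper hyperideal and Lemma \ref{58} applies: $Q$ is a primary hyperideal of $R$, hence by Theorem \ref{21} it is $1$-absorbing primary and by Theorem \ref{22} its radical $P := \sqrt Q$ is a prime hyperideal. I would then pin down $\sqrt Q$ exactly: the inclusion $\sqrt I \subseteq \sqrt Q$ is immediate from $I \subseteq Q$, and for the reverse, given a nonunit $r$ with $r^{n} \subseteq Q$ we have $r^{n} \circ J \subseteq I$; choosing a nonunit $w \in J \setminus \sqrt I$ and applying Theorem \ref{45} to $\langle r\rangle^{n-1} \circ \langle r\rangle \circ \langle w\rangle \subseteq I$ forces $\langle r\rangle^{n} \subseteq I$ (the alternative $\langle w\rangle \subseteq \sqrt I$ is excluded by the choice of $w$), whence $r \in \sqrt I$. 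Thus $\sqrt Q = \sqrt I$.

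Next comes the core step. Using the description of strongly $1$-absorbing primary hyperideals in Theorem \ref{56}, let $A, B, C$ be proper hyperideals of $R$ with $A \circ B \circ C \subseteq Q$; then $A \circ B \circ (C \circ J) \subseteq I$. Since $I$ is $1$-absorbing primary, Theorem \ref{45} gives either $A \circ B \subseteq I \subseteq Q$ — in which case we are done — or $C \circ J \subseteq \sqrt I$; in the latter case, primeness of $\sqrt I$ (Theorem \ref{22}) together with $J \nsubseteq \sqrt I$ yields $C \subseteq \sqrt I = \sqrt Q$. So at this point we have shown that $A \circ B \circ C \subseteq Q$ implies $A \circ B \subseteq Q$ or $C \subseteq \sqrt Q$, i.e.\ $Q$ is $1$-absorbing primary, with the slightly stronger book-keeping $\sqrt Q = \sqrt I$ in hand.

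The remaining task is to upgrade "$C \subseteq \sqrt Q$" to "$C \subseteq \sqrt 0$", which is exactly what the definition of strongly $1$-absorbing primary requires, and this is where Theorem \ref{51} must be invoked: I would verify one of its two clauses for $Q$. If $R$ is not local, Theorem \ref{25} makes $I$ primary and $J \nsubseteq \sqrt I$ then forces $Q = (I:J) = I$, so one is reduced to showing $\sqrt Q = \sqrt 0$. If $R$ is local with maximal hyperideal $M$, then $M = \sqrt Q$ and one argues $M^{2} \subseteq Q$, so that the second clause of Theorem \ref{51} holds and $Q$ is strongly $1$-absorbing primary; here Theorem \ref{54} and Corollary \ref{52} are the relevant companions. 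I expect this final reconciliation — closing the gap between $\sqrt I$ and $\sqrt 0$, and in particular handling the non-local case — to be the genuine obstacle, since everything up to that point is formal manipulation with Theorems \ref{45}, \ref{22}, \ref{56} and Lemma \ref{58}.
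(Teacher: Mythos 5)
Your setup is fine as far as it goes: Lemma~\ref{58} does give that $Q=(I:J)$ is primary, the identification $\sqrt{Q}=\sqrt{I}$ matches the paper's chain $I\subseteq (I:J)\subseteq \sqrt{I}$, and the $A\circ B\circ C$ manipulation via Theorem~\ref{45} and primeness of $\sqrt{I}$ is sound --- though it only re-proves that $Q$ is $1$-absorbing primary, which you already had from Lemma~\ref{58} and Theorem~\ref{21}. But the proposal stops exactly where the theorem lives: you never establish either clause of Theorem~\ref{51} for $Q$. In the non-local case you reduce to ``show $\sqrt{Q}=\sqrt{0}$'' and leave it there; in the local case you assert $M=\sqrt{Q}$ and $M^{2}\subseteq Q$ without argument (and the subcase $\sqrt{I}=M$ cannot even occur, since then $J\subseteq M=\sqrt{I}$). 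That is a genuine gap, not a routine verification to be filled in later.

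For comparison, the paper closes this point by arguing: if $\sqrt{I}\neq\sqrt{0}$, then $R$ is local with maximal hyperideal $\sqrt{I}$, whence $J\subseteq\sqrt{I}$, contradicting the hypothesis; so $\sqrt{I}=\sqrt{0}$, hence $\sqrt{(I:J)}=\sqrt{I}=\sqrt{0}$, and Lemma~\ref{58} together with Theorem~\ref{51} finishes. You should notice, however, that the dichotomy ``$\sqrt{I}=\sqrt{0}$ or $R$ is local with maximal hyperideal $\sqrt{I}$'' is precisely Theorem~\ref{51}'s characterization of \emph{strongly} $1$-absorbing primary hyperideals; it is not among the results proved for merely $1$-absorbing primary hyperideals, which is why you could not derive it. Indeed, with the hypothesis as literally stated the conclusion can fail: in $\mathbb{Z}$ with $a\circ b=\{ab\}$, take $I=\langle 4\rangle$ and $J=\langle 3\rangle\nsubseteq\sqrt{I}=\langle 2\rangle$; then $(I:J)=\langle 4\rangle$, yet $2\circ 3\circ 2\subseteq\langle 4\rangle$ while $2\circ 3\nsubseteq\langle 4\rangle$ and $2\notin\sqrt{0}$. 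The argument (yours or the paper's) goes through once $I$ is assumed strongly $1$-absorbing primary, since Theorem~\ref{51} then supplies exactly the step you flagged as the obstacle.
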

\begin{proof}
Let hyperideal $I$ of $R$  be  1-absorbing primary  and let $J$ be a proper hyperideal of $R$. Suppose that $\sqrt{I} \neq \sqrt{0}$. Hence $R$ is a local multiplicative hyperring with maximal hyperideal $\sqrt{I}$ and $J \subseteq \sqrt{I},$ which is a contradicion. Then we assume that $\sqrt{I}=\sqrt{0}$ is a prime hyperideal of $R$. Assume that $a \in (I:J)$. This means $a \circ J \subseteq I \subseteq \sqrt{I}$. Since hyperideal $I$ of $R$  be  1-absorbing primary and $J \nsubseteq \sqrt{I}$, then $a \in \sqrt{I}$. Thus, we have $I \subseteq (I:J) \subseteq \sqrt{I}$. Consequently, $\sqrt{(I:J)}=\sqrt{I}=\sqrt{0}$. Then by Lemma \ref{58} and Theorem \ref {51}, we conclude that hyperideal $(I : J)$ of $R$ is  strongly 1-absorbing  primary.
\end{proof}
We close this section by investigation the stability of strongly 1-absorbing primary hyperideals in various
ring-theoretic constructions.
\begin{theorem} \label{42} 
Let $R_1$ and $R_2$ be multiplicative hyperrings and $ \phi:R_1 \longrightarrow R_2$ be a good homomorphism. Then the following statements hold : 
\begin{itemize}
\item[{(1)}]~ If $\phi$ is a monomorphism and $I_2$ is a strongly 1-absorbing primary hyperideal of $R_2$, then $\phi^{-1} (I_2)$ is a strongly 1-absorbing primary hyperideal of $R_1$.
\item[{(2)}]~ If $\phi $ is an epimorphism and $I_1$ is a strongly   1-absorbing primary hyperideal of $R_1$ containing $Ker(\phi)$, then $\phi(I_1)$ is a strongly 1-absorbing primary hyperideal of $R_2$.
\end{itemize}
\end{theorem}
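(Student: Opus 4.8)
The plan is to mimic the element chase used for the analogous result on $1$-absorbing primary hyperideals, supported by two bookkeeping facts about how a good homomorphism interacts with radicals. First I would record that, for {\bf C}-hyperideals, $\phi^{-1}(\sqrt{J})=\sqrt{\phi^{-1}(J)}$, since $\phi(x)^{n}=\phi(x^{n})$ and $\phi(x^{n})\subseteq J$ is equivalent to $x^{n}\subseteq\phi^{-1}(J)$. The decisive special case is $J=\langle 0\rangle$: if $\phi$ is a monomorphism then $\phi(z)\in\sqrt{0_{R_{2}}}$ forces $\phi(z^{n})=\{0\}$, hence $z^{n}\subseteq\ker\phi=\{0\}$ and $z\in\sqrt{0_{R_{1}}}$; dually, if $\phi$ is an epimorphism then $\phi(\sqrt{0_{R_{1}}})\subseteq\sqrt{0_{R_{2}}}$, because $\langle 0_{R_{1}}\rangle\subseteq\ker\phi$.

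For part (1): take nonunit $x,y,z\in R_{1}$ with $x\circ y\circ z\subseteq\phi^{-1}(I_{2})$, so that $\phi(x)\circ\phi(y)\circ\phi(z)=\phi(x\circ y\circ z)\subseteq I_{2}$. Viewing $\phi(x),\phi(y),\phi(z)$ as nonunit elements of $R_{2}$ and applying the strongly $1$-absorbing primary property of $I_{2}$ gives either $\phi(x)\circ\phi(y)\subseteq I_{2}$, hence $x\circ y\subseteq\phi^{-1}(I_{2})$, or $\phi(z)\in\sqrt{0_{R_{2}}}$, hence $z\in\sqrt{0_{R_{1}}}$ by the fact above. Thus $\phi^{-1}(I_{2})$ is strongly $1$-absorbing primary.

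For part (2): given nonunit $x_{2},y_{2},z_{2}\in R_{2}$ with $x_{2}\circ y_{2}\circ z_{2}\subseteq\phi(I_{1})$, pick preimages $x_{1},y_{1},z_{1}$, which may be taken nonunit (a unit preimage of a nonunit would, via surjectivity and preservation of the identity, make the corresponding $x_{i}$ a unit). Exactly as in the proof of the $1$-absorbing primary version, every element of $x_{1}\circ y_{1}\circ z_{1}$ differs from an element of $I_{1}$ by a member of $\ker\phi\subseteq I_{1}$, so $x_{1}\circ y_{1}\circ z_{1}\subseteq I_{1}$ (here using that $I_{1}$ is a {\bf C}-hyperideal). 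Strongly $1$-absorbing primariness of $I_{1}$ then gives $x_{1}\circ y_{1}\subseteq I_{1}$, whence $x_{2}\circ y_{2}=\phi(x_{1}\circ y_{1})\subseteq\phi(I_{1})$, or $z_{1}\in\sqrt{0_{R_{1}}}$, whence $z_{2}=\phi(z_{1})\in\phi(\sqrt{0_{R_{1}}})\subseteq\sqrt{0_{R_{2}}}$. So $\phi(I_{1})$ is strongly $1$-absorbing primary.

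The hard part is the step in (1) where I tacitly assumed that $\phi$ carries the nonunits $x,y,z$ to nonunits: a monomorphism may send a nonunit to a unit (e.g. $\mathbb{Z}\hookrightarrow\mathbb{Q}$), and then this naive chase stalls. I would repair it by routing through the structure theorem (Theorem \ref{51}): according to whether $\sqrt{I_{2}}=\sqrt{0_{R_{2}}}$ holds, or $R_{2}$ is local with $M_{2}=\sqrt{I_{2}}$ and $M_{2}^{2}\subseteq I_{2}$, transfer the corresponding description to $\phi^{-1}(I_{2})$, using that a strongly $1$-absorbing primary hyperideal of a non-local hyperring is already primary. Verifying that $\phi^{-1}$ preserves this dichotomy — in particular that $\sqrt{\phi^{-1}(I_{2})}$ collapses onto $\sqrt{0_{R_{1}}}$ rather than merely onto $\phi^{-1}(\sqrt{I_{2}})$ — is the delicate point, and it is where the {\bf C}-hyperideal hypothesis and the injectivity of $\phi$ must be used; it is plausible that a mild extra hypothesis on $\phi$, of the sort imposed in the $1$-absorbing primary theorem, is in fact needed here as well.
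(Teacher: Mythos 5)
Your argument coincides with the paper's own proof: part (1) is the direct element chase through $\phi$, using injectivity to pull $\phi(z)\in\sqrt{0_{R_2}}$ back to $z\in\sqrt{0_{R_1}}$, and part (2) is verbatim the kernel/{\bf C}-hyperideal argument already used for the $1$-absorbing primary version. The gap you flag in (1) --- that a monomorphism need not send nonunits to nonunits, so the strongly $1$-absorbing primary property of $I_2$ cannot be invoked on $\phi(x),\phi(y),\phi(z)$ without a further hypothesis --- is genuine and is present, unaddressed, in the paper's own proof as well (note that the analogous theorem for $1$-absorbing primary hyperideals explicitly carries such an extra assumption on $\phi$), so your instinct that an additional hypothesis or a detour through Theorem \ref{51} is needed is sound.
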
 
\begin{proof} 
(1)  Let for some nonunit elements  $x,y,z \in R_1$, $x \circ  y \circ z \subseteq \phi^{-1}(I_2)$ such that $x \circ y \nsubseteq \phi^{-1}(I_2)$. This means $\phi(x) \circ \phi(y) \nsubseteq I_2$. Since $\phi(x) \circ \phi(y) \circ \phi(z) \subseteq I_2$ and hyperideal $I_2$ of $R_2$ is strongly 1-absorbing primary, then $\phi(z) \in \sqrt{0_{R_2}}$ which implies $z \in \sqrt{0_{R_1}}$, because $\phi$ is  a monomorphism. Thus  $\phi^{-1} (I_2)$ is a strongly 1-absorbing primary hyperideal of $R_1$.\\
(2) Let for some nonunit elements $x_2,y_2,z_2 \in R_2$,  $x_2 \circ y_2 \circ z_2 \subseteq \phi(I_1)$. Since $\phi$ is an epimorphism, there exist $x_1,y_1,z_1 \in R_1$ such that $\phi(x_1)=x_2, \phi(y_1)=y_2, \phi(z_1)=z_2$ and so $\phi(x_1 \circ y_1 \circ z_1)=x_2 \circ y_2 \circ z_2\subseteq \phi(I_1)$. Now take any $u \in x_1 \circ  y_1 \circ z_1 $. Then we get $\phi(u) \in \phi(x_1 \circ y_1 \circ z_1) \subseteq \phi(I_1)$ and so $\phi(u) = \phi(w)$ for some $w \in I_1$. This implies that $\phi(u-w) = 0 $, that is, $u-w \in Ker(\phi) \subseteq I_1$ and so $u \in I_1$. Since $I_1$ is a ${\bf C}$-hyperideal of $R_1$, then we conclude that $x_1 \circ y_1 \circ z_1\subseteq I_1$. Since $I_1$ is a strongly 1-absorbing primary hyperideal of $R_1$ then $x_1 \circ y_1 \subseteq I_1$ which implies $x_2 \circ y_2 =\phi(x_1 \circ y_1) \subseteq \phi(I_1)$ or $z_1 \in \sqrt{0_{R_1}}$ which implies $z_2=\phi(z_1) \in  \sqrt{0_{R_2}}$ .Thus, $\phi(I_1)$ is a strongly 1-absorbing primary hyperideal  of $R_2$. 
\end{proof}
\begin{corollary}
Let $I, J$ be two proper hyperideals of $R$ with $J \subseteq I$.
If  $I$ is a strongly 1-absorbing primary hyperideal of $R$ then  $I/J$ is a strongly  1-absorbing primary hyperideal of $R/I$.
\end{corollary}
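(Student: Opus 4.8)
The plan is to realise $I/J$ as the image of $I$ under the canonical projection and then quote the transfer result for strongly 1-absorbing primary hyperideals established just above (Theorem \ref{42}(2)). Concretely, I would define $\phi : R \longrightarrow R/J$ by $\phi(r) = r + J$ and first record the routine facts that $\phi$ is a good epimorphism of multiplicative hyperrings, that $\mathrm{Ker}(\phi) = J$, and that $\phi(I) = I/J$. Since $J \subseteq I \subsetneq R$, the hyperideal $I/J$ is proper in $R/J$, so it is a legitimate candidate.

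Next I would check that the hypotheses of Theorem \ref{42}(2) are met with $R_1 = R$, $R_2 = R/J$ and $I_1 = I$: $\phi$ is an epimorphism, $I$ is a strongly 1-absorbing primary hyperideal of $R$ by assumption, and $\mathrm{Ker}(\phi) = J \subseteq I$. Applying that theorem then yields immediately that $\phi(I) = I/J$ is a strongly 1-absorbing primary hyperideal of $R/J$, which is exactly the claim (the statement's ``$R/I$'' should read ``$R/J$''). One should also note that, unlike the $1$-absorbing primary analogue in Corollary \ref{43}, no extra nonunit-preservation hypothesis is needed here, because Theorem \ref{42}(2) in the strongly case carries none.

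There is essentially no hard step: the argument is a direct specialisation of Theorem \ref{42}(2) to the quotient map. The only points requiring a line of care are the verification that $\phi$ is genuinely a \emph{good} homomorphism (so that $\phi(x\circ y) = \phi(x)\circ\phi(y)$, needed to push the hyperproduct condition through) and the observation that the $\mathbf{C}$-hyperideal hypothesis in force throughout the paper guarantees $I$ and $I/J$ remain $\mathbf{C}$-hyperideals, so that the inclusion $u \in I$ for all $u \in x_1\circ y_1\circ z_1$ can be upgraded to $x_1\circ y_1\circ z_1 \subseteq I$ exactly as in the proof of Theorem \ref{42}. Beyond that the corollary is immediate.
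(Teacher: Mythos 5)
Your argument is correct and is exactly the intended derivation: the paper states this corollary without proof immediately after the strongly 1-absorbing primary version of Theorem \ref{42}, and the companion Corollary \ref{43} is proved by precisely this specialisation of the transfer theorem to the quotient map $\phi(r)=r+J$. Your observations that the conclusion should read $R/J$ rather than $R/I$, and that no nonunit-preservation hypothesis is needed here because the strongly version of the theorem carries none, are both accurate.
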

\begin{corollary}
Let $T$ be a subhyperring of $R$. If hyperideal $I$ of $R$ is strongly 1-absorbing primary then hyperideal $T \cap I$ of $T$ is strongly 1-absorbing primary.
\end{corollary}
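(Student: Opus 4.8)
The plan is to exhibit $T\cap I$ as the contraction of $I$ along the inclusion map and then invoke the monomorphism case of the transfer theorem for good homomorphisms. First I would let $\iota\colon T\hookrightarrow R$ be the inclusion, $\iota(t)=t$. Since $T$ is a subhyperring of $R$, the group operation and the hyperoperation of $T$ are the restrictions of those of $R$, so $\iota(t_1+t_2)=\iota(t_1)+\iota(t_2)$ and $\iota(t_1\circ t_2)=\iota(t_1)\circ\iota(t_2)$ for all $t_1,t_2\in T$; thus $\iota$ is a good homomorphism, and it is visibly injective, hence a good monomorphism. Moreover $\iota^{-1}(I)=\{t\in T: t\in I\}=T\cap I$, and this is a hyperideal of $T$ (proper as soon as $T\nsubseteq I$).

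With these observations, the corollary follows at once from Theorem \ref{42}(1): since $\iota$ is a monomorphism and $I$ is a strongly 1-absorbing primary hyperideal of $R$, the hyperideal $\iota^{-1}(I)=T\cap I$ of $T$ is strongly 1-absorbing primary.

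The only things that require attention are bookkeeping, not a new idea. One must record that $T\cap I$ is proper in $T$, i.e.\ $T\nsubseteq I$; if $T\subseteq I$ the assertion is vacuous, so one assumes (or arranges, e.g.\ via a scalar identity of $T$ not lying in the proper hyperideal $I$) that $T\nsubseteq I$. One should also keep in mind that a nonunit of $T$ need not be a nonunit of $R$; this is precisely why the monomorphism version of the transfer theorem is the right tool, since its proof does not require a hypothesis relating the nonunits of the two hyperrings but instead passes through the nilradical and the structural description of strongly 1-absorbing primary hyperideals (Theorem \ref{51}). I expect this minor bookkeeping to be essentially the only obstacle.
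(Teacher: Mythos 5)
Your derivation---apply the monomorphism case of the homomorphism-transfer theorem for strongly 1-absorbing primary hyperideals to the inclusion $\iota\colon T\hookrightarrow R$, which is a good monomorphism with $\iota^{-1}(I)=T\cap I$---is exactly the one-line argument the paper intends (it states the corollary without proof, immediately after that theorem), and your care about properness, i.e.\ the case $T\subseteq I$, is a sensible addition. One correction to your side remark: the paper's proof of that monomorphism statement does \emph{not} pass through the nilradical characterization of Theorem \ref{51}; it is a direct element chase that tacitly assumes $\phi$ carries nonunits to nonunits, so the unit-mismatch issue you rightly flag (a nonunit of $T$ may be a unit of $R$) is inherited from, rather than resolved by, the theorem you cite.
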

\section{weakly 1-absorbing primary hyperideal}
\begin{definition}
Let $I$ be a proper hyperideal of $R$. We call $I$
a weakly 1-absorbing primary hyperideal of $R$ if for nonunit elements $x,y,z \in R$, $0 \notin x \circ y \circ z \subseteq I$, then $x \circ y \subseteq I$ or $z \in \sqrt{I}$.
\end{definition}
It is clear that every  1-absorbing primary hyperideal of $R$ is a weakly 1-absorbing primary hyperideal. Moreover, every proper hyperideal of local multiplicative hyperring $R$ with maximal hyperideal $\sqrt{0}$ is  a weakly 1-absorbing primary hyperideal of $R$.
\begin{example}
Let $(\mathbb{Z},+,.)$ be the ring of integers. We define the hyperoperation $a \circ b=\{2a.b,4a.b\}$, for all $a,b \in \mathbb{Z}$. Then $(\mathbb{Z},+,\circ)$ is a multiplicative hyperring. The subset $<15>=\{15n \ \vert \ n \in \mathbb{Z}\}$ is a weakly 1-absorbing primary hyperideal of $\mathbb{Z}$.
\end{example}
\begin{theorem}   \label{61}
Let  $I$ be a weakly 1-absorbing primary ideal of $R$.
If hyperideal $\sqrt{I}$ of $R$  is  maximal, then  $I$ is  primary and then $I$ 
is 1-absorbing primary.
\end{theorem}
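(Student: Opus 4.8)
The plan is to reduce everything to one clean implication. Since Theorem~\ref{21}(1) guarantees that every primary hyperideal is $1$-absorbing primary, it suffices to prove that $I$ is primary; the second half of the statement is then automatic. So the real target is: writing $M:=\sqrt I$ (a maximal hyperideal by hypothesis), show that $a\circ b\subseteq I$ together with $b\notin M$ forces $a\in I$. Notice that this argument will not actually use the weakly $1$-absorbing primary hypothesis --- maximality of $\sqrt I$ alone suffices --- which is worth flagging. Throughout I would use the standing convention that every hyperideal is a ${\bf C}$-hyperideal, so that $M=\sqrt I$ is precisely $\{r\in R: r^{n}\subseteq I \text{ for some }n\in\mathbb N\}$, and I would fix an identity $e$ of $R$.

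The proof then rests on two preparatory facts. First I would show: if $b\notin M$ then $e\in I+\langle b\rangle$. The idea is that $\langle b\rangle+M$ is a hyperideal strictly containing $M$ (it contains $b$), so by maximality $\langle b\rangle+M=R$ and $e=t+m$ with $t\in\langle b\rangle$, $m\in M$; since $m^{n}\subseteq I$ for some $n$, and since $e\in(t+m)^{n}$ (as $e=t+m$ and $e$ is an identity), I would expand $(t+m)^{n}$ using the one-sided distributive law $x\circ(u+v)\subseteq x\circ u+x\circ v$ repeatedly, observe that the only term of the expansion with no factor $t$ is $m^{n}\subseteq I$, and note that every other term lies in $\langle b\rangle$ (it contains a factor $t\in\langle b\rangle$ and $\langle b\rangle$ absorbs products), so $(t+m)^{n}\subseteq I+\langle b\rangle$. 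Second, I would check the absorption fact: $a\circ b\subseteq I$ implies $a\circ\langle b\rangle\subseteq I$, by inducting over the construction of $\langle b\rangle$ from $b$ under negation, addition, and multiplication by elements of $R$, using $a\circ(-w)=-(a\circ w)$, $a\circ(u+v)\subseteq a\circ u+a\circ v$, and $a\circ(r\circ w)=r\circ(a\circ w)\subseteq r\circ I\subseteq I$. Granting these two facts, the main step is short: assume $a\circ b\subseteq I$ with $b\notin M$; write $e=j+s$ with $j\in I$, $s\in\langle b\rangle$; then
\[
a\in a\circ e\subseteq a\circ j+a\circ s\subseteq I+a\circ\langle b\rangle\subseteq I,
\]
so $a\in I$. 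Hence $I$ is primary (it is proper because $M\neq R$), and Theorem~\ref{21}(1) upgrades this to $1$-absorbing primary.

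The hard part is not conceptual but the hyperring bookkeeping in the two preparatory facts: making the ``binomial'' expansion $(t+m)^{n}\subseteq I+\langle b\rangle$ rigorous when only one-sided distributivity is available (one must argue with inclusions of subsets rather than equalities, and keep careful track of which summands contain a factor $t$), and pinning down an explicit description of $\langle b\rangle$ well enough to run the induction for $a\circ\langle b\rangle\subseteq I$. Both steps genuinely use the ${\bf C}$-hyperideal convention, since that is what makes $\sqrt I$ equal to the set of elements some power of which lies in $I$. Finally, if one insists that a primary hyperideal be nonzero, I would note the degenerate possibility $I=\langle 0\rangle$ (which forces $R$ to be local with maximal hyperideal $\sqrt 0$) and dispose of it, or simply assume $I\neq\langle 0\rangle$.
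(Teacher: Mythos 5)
Your proposal is correct and shares the paper's overall skeleton: reduce everything to showing $I$ is primary, then invoke Theorem~\ref{21}(1) to upgrade primary to $1$-absorbing primary. The difference is in what gets proved. The paper's argument for the key step is a bare assertion chain --- ``$\sqrt I$ maximal, hence $\sqrt I$ prime, hence $I$ primary'' --- and the middle inference as written (primeness of the radical implies primary) is false in general, even for ordinary commutative rings; what is true, and what the correct statement of the theorem relies on, is that \emph{maximality} of $\sqrt I$ forces $I$ to be primary. Your proposal supplies exactly this missing argument, transplanting the classical proof ($M+\langle b\rangle=R$, write $e=t+m$, expand $(t+m)^n\subseteq I+\langle b\rangle$ using $m^n\subseteq I$, then absorb $a\circ\langle b\rangle$ into $I$) into the hyperring setting, with an honest accounting of where one-sided distributivity, the ${\bf C}$-hyperideal convention, and commutativity are needed. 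Your observation that the weakly $1$-absorbing primary hypothesis is never used is accurate and applies equally to the paper's own proof. The only caveats are minor and you already flag them: one needs an identity to speak of units (which the definition of weakly $1$-absorbing primary already presupposes), the two-sided absorption of $\langle b\rangle$ implicitly uses commutativity, and the degenerate case $I=\langle 0\rangle$ should be handled if ``primary'' is read as requiring a nonzero hyperideal. In short, your proof is not a different route so much as an actual proof of the step the paper hand-waves.
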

\begin{proof}
Assume that  $I$ is a weakly 1-absorbing primary ideal of $R$ such that hyperideal $\sqrt{I}$ of $R$  is  maximal. This means that $\sqrt{I}$ is a prime hyperideal of $R$. Hence $I$ is a primary hyperideal of $R$. Consequently, hyperideal $I$ of $R$ is 1-absorbing primary.
\end{proof}
\begin{theorem} \label{62}
Let $R$ be a reduced hyperring and let $I$ be nonzero hyperideal of $R$. If $I$ is  weakly 1-absorbing primary, then $\sqrt{I}$ is a prime hyperideal of $R$. 
\end{theorem}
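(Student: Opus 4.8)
The plan is to show directly that whenever $a\circ b\subseteq\sqrt I$ one has $a\in\sqrt I$ or $b\in\sqrt I$. First I would dispose of the trivial case where $a$ or $b$ is a unit: if $e\in a\circ a'$, then $b\in b\circ e\subseteq a'\circ a\circ b\subseteq a'\circ\sqrt I\subseteq\sqrt I$ because $\sqrt I$ is a hyperideal, so from now on $a,b$ may be taken nonunit. Next, since every hyperideal is a $\mathbf C$-hyperideal, the prime radical satisfies $\sqrt I=\{r\in R: r^m\subseteq I\text{ for some }m\}$ by Proposition~3.2 of \cite{ref1}; combining this with $a\circ b\subseteq\sqrt I$ gives, exactly as in the proof of Theorem~\ref{22}, an exponent $n\geq 2$ with $(a\circ b)^n=a^n\circ b^n\subseteq I$, which I would rewrite as $a^{n-1}\circ a\circ b^n\subseteq I$.

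Now I would split on whether $0$ lies in this product. If $0\notin a^{n-1}\circ a\circ b^n$, the argument is that of Theorem~\ref{22}: applying the weakly $1$-absorbing primary hypothesis (to the proper principal hyperideals $\langle a^{n-1}\rangle,\langle a\rangle,\langle b^n\rangle$, or to the displayed elements directly) yields $a^n\subseteq I$ or $b^n\subseteq\sqrt I$, hence $a\in\sqrt I$ or $b\in\sqrt I$, and we are done. The substantive case is $0\in a^{n-1}\circ a\circ b^n=(a\circ b)^n$. Here I would use that $\langle 0\rangle$ is a $\mathbf C$-hyperideal and $(a\circ b)^n$ is a finite product of elements of $R$ meeting $\langle 0\rangle$, so $(a\circ b)^n\subseteq\langle 0\rangle$; then for each $c\in a\circ b$ we get $c^n\subseteq(a\circ b)^n=\langle 0\rangle$, and since $R$ is reduced, $c=0$. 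Thus $a\circ b=\langle 0\rangle\subseteq I$.

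It is in this degenerate situation that the hypothesis $I\neq\langle 0\rangle$ finally gets used. I would pick $0\neq t\in I$; since $t\in I\subseteq\sqrt I$ while (aiming for a contradiction) $b\notin\sqrt I$, also $b+t\notin\sqrt I$, and from $a\circ b=\langle 0\rangle$ together with distributivity one checks $a\circ(b+t)=a\circ t\subseteq I$, and likewise $a^2\circ(b+t)=a^2\circ t$. The idea is then to feed a triple product such as $a\circ a\circ(b+t)=a^2\circ t\subseteq I$ into the weakly $1$-absorbing primary condition: as long as this set omits $0$, one pairing of the condition forces $a^2\subseteq I$ (hence $a\in\sqrt I$) and the other forces $b+t\in\sqrt I$ (hence $b\in\sqrt I$), finishing the proof; and if the product happens to collapse onto $\langle 0\rangle$, one restarts the perturbation with another element of $I$ (for instance a nonzero member of $a\circ t$, which again lies in $I$) and repeats.

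I expect the main obstacle to be precisely this zero-product case, and the two points that need care are: (i) the perturbed element $b+t$ and any auxiliary multiplier must be \emph{nonunit} before the weak condition applies — if some sum of nonunits turns out to be a unit, one should invoke Lemma~\ref{23} to deduce that $R$ is local and treat that subcase apart; and (ii) the relevant triple products must genuinely avoid $0$, since the weak hypothesis says nothing once $0$ is absorbed. Arranging the bookkeeping so that $t$ can always be chosen (or iterated) to make, say, $a\circ t\neq\langle 0\rangle$ and $a^2\circ t\neq\langle 0\rangle$ is the technical heart of the proof; by contrast the non-degenerate case is routine once Theorem~\ref{22} is available.
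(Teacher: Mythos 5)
Your main line — passing from $a\circ b\subseteq\sqrt{I}$ to $(a\circ b)^n\subseteq I$, rewriting this as a triple product of nonunits, and feeding it into the weakly $1$-absorbing primary condition — is exactly the paper's argument: the paper takes $n=2t$, writes $(x\circ y)^n$ as $x^t\circ x^t\circ y^n$, and uses reducedness only to assert that this set does not contain $0$. So your non-degenerate case is the intended proof, and your preliminary reduction to nonunit $a,b$ is a harmless addition.

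The problem is your degenerate case $0\in(a\circ b)^n$, equivalently $a\circ b=\{0\}$. You have correctly identified that the weak hypothesis says nothing here and that one must perturb by some $0\neq t\in I$, but you do not actually close the case: the step ``if the product collapses onto $\langle 0\rangle$, restart with another element of $I$ and repeat'' is not an argument. Nothing guarantees that \emph{some} $t\in I$ makes $0\notin a\circ a\circ(b+t)$ — for instance nothing in the hypotheses rules out $a^2\circ I=\{0\}$, in which case every perturbation collapses and your iteration never produces a usable triple — and there is no termination argument for the proposed restarting. Moreover, if $b+t$ turns out to be a unit you cannot fall back on Lemma \ref{23}, whose hypothesis is that \emph{every} sum of a nonunit with a unit is a unit, not that one particular such sum is. So as written the case $a\circ b=\{0\}$ with $a,b\notin\sqrt{I}$ remains open. (You have isolated the right difficulty: the paper's own proof simply assumes $0\neq x\circ y$ at the outset and never returns to this case either, so it is no help in filling the hole.)
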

\begin{proof}
Assume that  for some nonunit  elements $x,y \in R$, $0 \neq x \circ y \subseteq  \sqrt{I}$. This means that $(x \circ y)^n \subseteq I$ for  $n=2t (t \geq 1)$. Since $R$ is a reduced hyperring $(x \circ y)^n \neq 0$. we have $0 \neq x^t \circ x ^t \circ y ^n \subseteq I$. Since $I$ is a weakly 1-absorbing primary hyperideal of $R$, then we get $x^t \circ x^t =x^n \subseteq I$ or $y^n \subseteq \sqrt{I}$. Since $I \neq \{0\}$, then $\sqrt{I}$ is a prime hyperideal of $R$.
\end{proof}
\begin{theorem} \label{63}
Let $I$ be a nonzero hyperideal of a regular hyperring $R$. Then the followings are equivalent:
\begin{itemize}
\item[{(1)}]~ The hyperideal $I$ of $R$ is weakly 1-absorbing primary.
\item[{(2)}]~ The hyperideal $I$ of $R$ is  primary. 
\item[{(3)}]~ The hyperideal $I$ of $R$ is 1-absorbing primary. 
\end{itemize}
\end{theorem}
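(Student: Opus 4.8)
Observe that $(2)\Rightarrow(3)$ is exactly Theorem~\ref{21}(1), and $(3)\Rightarrow(1)$ is the elementary remark recorded immediately after the definition of a weakly $1$-absorbing primary hyperideal (every $1$-absorbing primary hyperideal is weakly $1$-absorbing primary). So the whole content of the statement is the implication $(1)\Rightarrow(2)$, and the plan is to obtain it by passing to the radical $\sqrt I$ and invoking Theorem~\ref{62}.

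The argument hinges on two facts about a regular hyperring $R$, both proved by the same telescoping trick. First, $R$ is reduced: if $x^n=\{0\}$, write $x\in x^2\circ t$ by regularity; then, using associativity and commutativity of $\circ$ to regroup powers together with absorptivity of $0$, one gets $x^2\subseteq x^4\circ t^2$, hence $x\in x^2\circ t\subseteq x^4\circ t^3\subseteq\cdots\subseteq x^{2^k}\circ t^{2^k-1}$, and as soon as $2^k\ge n$ the right-hand side is $\{0\}$, so $x=0$. Second, in $R$ every hyperideal satisfies $\sqrt I=I$: if $x\in\sqrt I$ then $x^m\subseteq I$ for some $m$ (since for $\mathbf{C}$-hyperideals $\sqrt I=\{r:r^\ell\subseteq I \text{ for some }\ell\}$), and the same telescoping gives $x\in x^{2^k}\circ t^{2^k-1}$ with $x^{2^k}=x^m\circ x^{2^k-m}\subseteq I\circ R\subseteq I$ once $2^k\ge m$, whence $x\in I\circ t^{2^k-1}\subseteq I$; since always $I\subseteq\sqrt I$, this gives $\sqrt I=I$.

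Now suppose $I$ is a nonzero weakly $1$-absorbing primary hyperideal. By the first fact $R$ is reduced, so Theorem~\ref{62} applies and $\sqrt I$ is a prime hyperideal; by the second fact $I=\sqrt I$, so $I$ itself is prime, and in particular primary --- this is $(1)\Rightarrow(2)$. (In fact the argument shows each of $(1)$, $(2)$, $(3)$ is equivalent to ``$I$ is prime'' in a regular hyperring.) The only genuinely technical step is the manipulation of iterated hyperproducts in the two facts above: every inclusion invoked --- associativity, the subdistributive inclusions used only in their safe direction, $I\circ R\subseteq I$, and absorptivity of $0$ --- is available without assuming strong distributivity, but the telescoping must be arranged so that the exponent of $x$ actually grows past $n$ (resp.\ $m$). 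A more direct route avoiding Theorem~\ref{62} --- writing $x\in x\circ c$ with $c\in x\circ t$ nonunit and applying the definition to the triple $(x,c,y)$ --- also works, but one must then separately dispose of the degenerate case $0\in x\circ c\circ y$, which is why the radical route is the cleaner one to carry out.
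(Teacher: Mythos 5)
Your reduction of the theorem to the single implication $(1)\Rightarrow(2)$ is exactly right, and your proof of that implication is correct, but it takes a genuinely different route from the paper's. The paper also begins by invoking ``regular $\Rightarrow$ reduced'' (citing Theorem 8 of the Ameri--Kordi paper on regular multiplicative hyperrings) and Theorem~\ref{62} to get that $\sqrt I$ is prime; but it then uses the fact that every prime hyperideal of a regular hyperring is \emph{maximal}, so that Theorem~\ref{61} (weakly $1$-absorbing primary with maximal radical implies primary) finishes the argument. You instead prove that in a regular hyperring every hyperideal is a radical hyperideal, $I=\sqrt I$, so that $I$ is itself prime, and primary follows trivially without any appeal to Theorem~\ref{61}. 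Your route is arguably cleaner --- it bypasses Theorem~\ref{61} (whose own proof in the paper leans on the maximality of the radical) and yields the stronger conclusion that all three conditions are equivalent to $I$ being prime --- while the paper's route leans on two imported facts about regular hyperrings rather than one.

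One caveat on the only technical step you carry out by hand. Your telescoping proof that $I=\sqrt I$ is sound: it uses only $x\in x^{2^k}\circ t^{2^k-1}$, the hyperideal absorption $I\circ R\subseteq I$, and the ${\bf C}$-hyperideal description of $\sqrt I$, all of which are available here. But your parallel proof that a regular hyperring is reduced needs $\{0\}\circ a=\{0\}$, and this is \emph{not} a consequence of the axioms of a multiplicative hyperring as stated in Section~2 (subdistributivity only gives $a\circ 0\subseteq a\circ 0+a\circ 0$). Since the paper's own proof simply cites the regular-implies-reduced result from the literature, you should do the same rather than assert absorptivity of $0$; with that substitution your argument is complete.
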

\begin{proof}
$(1) \Longrightarrow (2)$ Since $R$ is a regular hyperring, then $R$ is a reduced hyperring, by Theorem 8 in \cite{amer3}. Therefore, by Theorem \ref{62}, hyperideal $\sqrt{I}$ of $R$ is prime. Since every prime hyperideal in a regular hyperring is maximal, then hyperideal $\sqrt{I}$ of $R$ is maximal. Thus hyperideal $I$ of $R$ is primary by Theorem \ref{61}. The proof of other cases is clear.
\end{proof}
\begin{definition}
Let hyperideal $I$ of $R$ be  weakly 1-absorbing primary and  $x,y,z$ be nonunit elements of $R$. We call $(x, y, z)$ is a 1-triple-zero of $I$ if $\{0\} = x \circ y \circ z$ , $x \circ y \nsubseteq I$ and  $z \notin \sqrt{I}$.
\end{definition}
\begin{theorem}
Let $R$ be a strongly distributive multiplicative hyperring. Let hyperideal $I$ of $R$ be weakly 1-absorbing primary  and  $(x, y, z)$ be a 1-triple-zero of $I$. Then
\begin{itemize}
\item[{(1)}]~ $0 \in x \circ y \circ I.$ 
\item[{(2)}]~ If $x, y \notin (I:z)$, then $0 \in y \circ z \circ I$ and $0 \in x \circ z \circ I$.
\end{itemize}
\end{theorem}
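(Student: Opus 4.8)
The plan is to prove both parts by a contradiction argument that translates one of $x,y,z$ by an element of $I$ and then expands the resulting product via strong distributivity; throughout I use that $\sqrt I$ is a hyperideal, hence an additive subgroup of $(R,+)$, and that $R$ is commutative with identity $e$. It is worth recording first that $0\in x\circ y\circ I$ is already cheap: for any $s,t\in R$ one has $s\circ 0=s\circ(t-t)=s\circ t+s\circ(-t)=s\circ t-s\circ t\ni 0$ by axiom~(iii) and strong distributivity, so, as $0\in I$, $0\in x\circ y\circ 0\subseteq x\circ y\circ I$ (and similarly for $y\circ z\circ I$ and $x\circ z\circ I$). I nevertheless carry out the more robust argument below, since it is the pattern needed for subsequent results and does not use that $0$ is absorbing.

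For part (1), suppose $0\notin x\circ y\circ I$, so $0\notin x\circ y\circ w$ for every $w\in I$; fix such a $w$. Strong distributivity and $x\circ y\circ z=\{0\}$ give
\[
x\circ y\circ(z+w)=x\circ y\circ z+x\circ y\circ w=\{0\}+x\circ y\circ w=x\circ y\circ w\subseteq I,
\]
and the left-hand product avoids $0$. If $z+w$ is a nonunit, then $(x,y,z+w)$ is a nonunit triple with $0\notin x\circ y\circ(z+w)\subseteq I$, so the weakly $1$-absorbing primary property forces $x\circ y\subseteq I$ or $z+w\in\sqrt I$; the former contradicts $x\circ y\nsubseteq I$ from the $1$-triple-zero hypothesis, and in the latter $z=(z+w)-w\in\sqrt I$ (since $w\in I\subseteq\sqrt I$), contradicting $z\notin\sqrt I$.

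The remaining possibility, $z+w=v\in U(R)$, is the one real obstacle, since the defining property cannot be invoked for a unit triple. Here I would use strong distributivity once more: from $z=v-w$ and $x\circ y\circ z=\{0\}$ we get $\{0\}=x\circ y\circ v-x\circ y\circ w$, which forces $x\circ y\circ v=x\circ y\circ w=\{p\}$ for a single $p\in x\circ y\circ w\subseteq I$; choosing $v'$ with $e\in v\circ v'$ then gives $x\circ y\subseteq(x\circ y)\circ v\circ v'=\{p\}\circ v'=p\circ v'\subseteq I$, again contradicting $x\circ y\nsubseteq I$. Hence $0\in x\circ y\circ I$.

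For part (2), assume $x,y\notin(I:z)$, i.e.\ $x\circ z\nsubseteq I$ and $y\circ z\nsubseteq I$, and suppose $0\notin y\circ z\circ I$. For $w\in I$, strong distributivity gives $(x+w)\circ y\circ z=x\circ y\circ z+w\circ y\circ z=w\circ y\circ z\subseteq I$, a product avoiding $0$. If $x+w$ is a nonunit, applying the weakly $1$-absorbing primary property to $(x+w,y,z)$ yields $(x+w)\circ y\subseteq I$ or $z\in\sqrt I$; the latter is excluded, and in the former $x\circ y=\bigl((x+w)+(-w)\bigr)\circ y=(x+w)\circ y-(w\circ y)\subseteq I+I=I$ (using axiom~(iii) and $w\circ y\subseteq I$), contradicting $x\circ y\nsubseteq I$. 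If $x+w\in U(R)$, the singleton-forcing argument of part~(1) instead gives $y\circ z\subseteq I$, contradicting $y\circ z\nsubseteq I$. Thus $0\in y\circ z\circ I$; applying this conclusion to the $1$-triple-zero $(y,x,z)$ (for which $y,x\notin(I:z)$ as well) gives $0\in x\circ z\circ I$.
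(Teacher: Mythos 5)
Your proposal is correct and follows the same basic strategy as the paper's proof: perturb $z$ by an element $u\in I$ (resp.\ $x$ by $v\in I$ in part (2)), use strong distributivity together with $x\circ y\circ z=\{0\}$ to see that the perturbed product equals $x\circ y\circ u\subseteq I$ and avoids $0$, and then invoke the weakly $1$-absorbing primary property to contradict $x\circ y\nsubseteq I$ or $z\notin\sqrt{I}$. Two points where you go beyond the paper are worth recording. First, the paper applies the defining property to the triple $(x,y,z+u)$ without checking that $z+u$ is a nonunit; your separate treatment of the case $z+u\in U(R)$ closes a genuine gap there (the paper does address the analogous issue in part (2), asserting that $x+v$ must be a nonunit because a unit factor would force $y\circ z\subseteq I$, which is essentially your singleton argument in disguise). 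Second, your preliminary remark is a fair criticism of the theorem itself: in a strongly distributive hyperring $s\circ 0=s\circ(t-t)=s\circ t-s\circ t\ni 0$ for every $s$, and since $0\in I$ this already gives $0\in x\circ y\circ I$, $0\in y\circ z\circ I$ and $0\in x\circ z\circ I$ unconditionally, so both conclusions are vacuous under the stated hypotheses. One small imprecision: in the unit case you write $\{0\}=x\circ y\circ v-x\circ y\circ w$, but strong distributivity only gives $x\circ y\circ(v-w)=\bigcup_{s\in x\circ y}(s\circ v-s\circ w)$, which is in general a proper subset of the full difference set $x\circ y\circ v - x\circ y\circ w$; the correct conclusion is that $s\circ v=s\circ w=\{p_s\}$ for each $s\in x\circ y$ separately, after which $s\in s\circ e\subseteq s\circ v\circ v'=p_s\circ v'\subseteq I$ still yields $x\circ y\subseteq I$, so your argument survives with that elementwise rephrasing.
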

\begin{proof}
(1) Let $0 \notin x \circ y \circ I$. Then for every nonunit element $u \in I$, $0 \notin x \circ y \circ u$. Therefore we have $0 \notin x \circ y \circ (z+u) \subseteq I$. Since $(x,y,z)$ is a 1-triple-zero and $I$ is a weakly
1-absorbing primary hyperideal of $R$, then $z+u \in \sqrt{I}$. Since $u \in I \subseteq \sqrt{I}$ then $z \in \sqrt{I}$ which is a contradiction. Hence,  $0 \in x \circ y \circ I$.\\
(2) Let $0 \notin y \circ z \circ I$. Then for every  nonunit element $v \in I$ such that $0 \notin y \circ z \circ v$.
Therefore we have $0 \notin (x+v) \circ y
 \circ z \subseteq I$. Since $y \circ z \nsubseteq I$, then $x+v $ is  a nonunit element of $R$. Since $x \circ y \nsubseteq I$ and $v \circ y \subseteq I$ and hyperideal $I$ of $R$ is weakly 1-absorbing primary, then $ (x + v) \circ y \nsubseteq I$ which implies $z \in \sqrt{I}$ which is a contradiction. By a similar argument, we can conclude that $0 \in x \circ z \circ I$.
 \end{proof}
 \begin{theorem}
 Let $\{I_i\}_{i \in \Delta}$
be a set  of weakly 1-absorbing primary
hyperideals of $R$ such that for every distinct $i,j \in \Delta$, $P = \sqrt{I_i}=\sqrt{I_j}$. Then $\bigcap_{i \in \Delta}I_i$
is a weakly 1-absorbing primary hyperideal of $R$.
\end{theorem}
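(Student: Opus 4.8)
The plan is to follow the template of Theorem~\ref{41}, replacing ``$P$-1-absorbing primary'' throughout by ``weakly 1-absorbing primary''. Write $I=\bigcap_{i\in\Delta}I_i$. If $\Delta$ has only one element the claim is trivial, so assume $|\Delta|\geq 2$; then, pairing any index with a fixed one, the hypothesis that $\sqrt{I_i}=\sqrt{I_j}$ for all distinct $i,j$ forces $\sqrt{I_i}=P$ for \emph{every} $i\in\Delta$. First I would dispatch the routine bookkeeping: $I$ is a proper hyperideal, since $I\subseteq I_{i_0}\subsetneq R$ for any fixed $i_0\in\Delta$; and $I$ is a $\mathbf{C}$-hyperideal, because if $A\in\mathbf{C}$ meets $I$ then $A$ meets each $I_i$, hence $A\subseteq I_i$ for all $i$ and so $A\subseteq I$. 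Consequently $\sqrt I=\{r\in R:\ r^{n}\subseteq I\text{ for some }n\in\mathbb{N}\}$.

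The one step that is not purely formal --- and the one I expect to be the main obstacle --- is the radical identity $\sqrt I=P$. The inclusion $\sqrt I\subseteq\bigcap_i\sqrt{I_i}=P$ is immediate from $I\subseteq I_i$. For the reverse inclusion $P\subseteq\sqrt I$ one needs, for a given $r\in P$, a \emph{single} exponent $n$ with $r^{n}\subseteq I_i$ for all $i\in\Delta$; when $\Delta$ is finite this is clear (take the largest of the exponents witnessing $r\in\sqrt{I_i}$), and handling arbitrary $\Delta$ is exactly where the infiniteness intervenes. I would push this through using the common value $P$ of the radicals together with the standing $\mathbf{C}$-hyperideal assumption, and this is the place where I would concentrate the write-up.

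Granting $\sqrt I=P$, the conclusion is immediate. Let $x,y,z\in R$ be nonunit elements with $0\notin x\circ y\circ z\subseteq I$, and suppose $x\circ y\nsubseteq I$. Since $I=\bigcap_{i\in\Delta}I_i$, there is some $k\in\Delta$ with $x\circ y\nsubseteq I_k$. Now $0\notin x\circ y\circ z\subseteq I\subseteq I_k$ and $I_k$ is weakly 1-absorbing primary, so $z\in\sqrt{I_k}=P=\sqrt I$. Thus either $x\circ y\subseteq I$ or $z\in\sqrt I$, which is precisely what is needed for $I$ to be a weakly 1-absorbing primary hyperideal of $R$.
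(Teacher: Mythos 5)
Your closing argument is exactly the paper's: choose $k$ with $x\circ y\nsubseteq I_k$, apply the weakly 1-absorbing primary property of $I_k$ to get $z\in\sqrt{I_k}=P$, and then invoke $P=\sqrt{I}$. The difficulty you isolate --- the inclusion $P\subseteq\sqrt{\bigcap_{i}I_i}$ --- is therefore the entire content of the proof, and you leave it unproved. This is a genuine gap, not a deferred routine verification: for infinite $\Delta$ the inclusion is false in general. Already in the degenerate case where $R$ is an ordinary commutative local ring viewed as a multiplicative hyperring via $x\circ y=\{xy\}$, take $R=\mathbb{Z}_{(p)}$ and $I_n=(p^n)$: each $I_n$ is $(p)$-primary, hence weakly 1-absorbing primary, with common radical $P=(p)$, yet $\bigcap_{n}I_n=(0)$ has radical $(0)\subsetneq P$. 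No appeal to the common value of the radicals or to the standing $\mathbf{C}$-hyperideal assumption can manufacture a single exponent $n$ with $r^n\subseteq I_i$ for all $i$; the max-of-exponents device (using absorption to get $r^m\subseteq I_i$ for all $m\geq n_i$) is genuinely confined to finite $\Delta$, which is the one case you already dispose of. So, as written, your proof is complete only for finite index sets.

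For what it is worth, the paper's own proof is no better at this point: it writes $z\in\sqrt{I_t}=P=\sqrt{I}$ with no justification of the second equality, so you have located, rather than introduced, the weak spot of the published argument. In the example above the conclusion of the theorem happens to survive for trivial reasons (the zero ideal is vacuously weakly 1-absorbing primary), but the proposed argument does not establish it; to make the statement safe for infinite $\Delta$ one would need either a different route to $z\in\sqrt{I}$ or an added hypothesis such as $\sqrt{I}=P$ or finiteness of $\Delta$, as in the finite intersection result, Theorem \ref{41}.
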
 
\begin{proof}
Assume that $I=\bigcap_{i \in \Delta}I_i$. Let for nonunit elements $x,y,z \in R$, $0 \neq x \circ y \circ z \subseteq I$ such that $x \circ y \nsubseteq I$. This means that there exists $t \in \Delta$, $0 \neq x \circ y \circ z \subseteq I_t$ but $x \circ y \nsubseteq I_t$. Since $I_t$ is a weakly 1-absorbing primary hyperideal of $R$, then $z \in \sqrt{I_t}=P=\sqrt{I}$.
\end{proof} 
\begin{theorem} 
Let $R_1$ and $R_2$ be  multiplicative hyperrings with identity such that are not hyperfields.  Let $I$ be a nonzero proper hyperideal of $R_1 \times R_2$. Then the followings
are equivalent.
\begin{itemize}
\item[{(1)}]~ $I$ is a weakly 1-absorbing primary hyperideal of $R_1 \times R_2$.
\item[{(2)}]~ $I=R_1 \times I_2$ for some
primary hyperideal $I_2$ of $R_2$ or $I=I_1 \times R_2$ for some primary hyperideal $I_1$ of $R_1$.
\item[{(3)}]~$I$ is a 1-absorbing primary hyperideal of $R_1 \times R_2$.
\item[{(4)}]~$I$ is a primary hyperideal of $R_1$.
\end{itemize}
\end{theorem}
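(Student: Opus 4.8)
The plan is to prove the chain of implications $(2)\Rightarrow(4)\Rightarrow(3)\Rightarrow(1)\Rightarrow(2)$, reading $(4)$ as ``$I$ is a primary hyperideal of $R_1\times R_2$'' (the reference to $R_1$ alone is a slip). Here $(4)\Rightarrow(3)$ is Theorem \ref{21}(1) applied to $R_1\times R_2$, and $(3)\Rightarrow(1)$ is the remark recorded just after the definition of weakly $1$-absorbing primary hyperideal. For $(2)\Rightarrow(4)$: if $I=I_1\times R_2$ with $I_1$ primary in $R_1$, then $\sqrt I=\sqrt{I_1}\times R_2$, and any inclusion $(p_1,p_2)\circ(q_1,q_2)\subseteq I$ forces $p_1\circ q_1\subseteq I_1$, hence $p_1\in I_1$ or $q_1\in\sqrt{I_1}$, i.e. $(p_1,p_2)\in I$ or $(q_1,q_2)\in\sqrt I$; the case $I=R_1\times I_2$ is symmetric. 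Thus all the content lies in $(1)\Rightarrow(2)$.

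For $(1)\Rightarrow(2)$ I would first record two structural facts about $R=R_1\times R_2$: every hyperideal of $R$ has the form $I_1\times I_2$ for hyperideals $I_j$ of $R_j$ (using the identities: $(a,b)\in I$ implies $(a,b)\circ(e_1,0)\subseteq I$, so $(a,0)\in I$, and likewise $(0,b)\in I$), and $\sqrt{I_1\times I_2}=\sqrt{I_1}\times\sqrt{I_2}$ (since the prime hyperideals of $R$ are precisely the $P_1\times R_2$ and the $R_1\times P_2$). Write $I=I_1\times I_2$, nonzero and proper. Suppose first $I_1=R_1$; I claim $I_2$ is primary in $R_2$. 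Let $x_2\circ y_2\subseteq I_2$ with $x_2\notin I_2$; then $x_2,y_2$ are nonunits (if $y_2$ were a unit then $x_2\in I_2$, and if $x_2$ were a unit then $y_2\in I_2\subseteq\sqrt{I_2}$ and there is nothing to prove). As $R_1$ is not a hyperfield, fix a nonzero nonunit $a\in R_1$ and apply $(1)$ to the nonunit triple $(a,e_2),(e_1,x_2),(e_1,y_2)$ of $R$: its product equals $(a\circ e_1\circ e_1)\times(e_2\circ x_2\circ y_2)\subseteq R_1\times I_2=I$ and does not contain $(0,0)$, because $a\circ e_1\circ e_1\in\mathbf C$ contains the nonzero element $a$ while $\langle 0\rangle=\{0\}$ is a $\mathbf C$-hyperideal; moreover $(a,e_2)\circ(e_1,x_2)$ has second component $e_2\circ x_2\ni x_2\notin I_2$, so it is not contained in $I$. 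Hence $(e_1,y_2)\in\sqrt I=R_1\times\sqrt{I_2}$, that is $y_2\in\sqrt{I_2}$, as required. The case $I_2=R_2$ is symmetric, this time using that $R_2$ is not a hyperfield.

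It remains to rule out the possibility that $I_1\subsetneq R_1$ and $I_2\subsetneq R_2$ both hold; assuming this, I would produce a triple contradicting $(1)$. Since $I\neq 0$, after possibly interchanging the two factors we may assume $I_1\neq 0$, and we fix a nonzero element $t\in I_1$ (automatically a nonunit). If $I_2\neq 0$, fix $0\neq s\in I_2$ and take the nonunit triple $(t,e_2),(t,e_2),(e_1,s)$: its product lies in $I_1\times I_2=I$ and avoids $(0,0)$ since its second component $e_2\circ e_2\circ s\in\mathbf C$ contains $s\neq 0$; the product $(t,e_2)\circ(t,e_2)$ has second component $e_2\circ e_2\ni e_2\notin I_2$, so it escapes $I$; and $(e_1,s)\notin\sqrt I=\sqrt{I_1}\times\sqrt{I_2}$ because $e_1\notin\sqrt{I_1}$ --- contradicting $(1)$. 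If $I_2=0$, pick a nonzero nonunit $b\in R_2$ (available since $R_2$ is not a hyperfield) and take the nonunit triple $(t,e_2),(e_1,b),(e_1,0)$: its product is $(t\circ e_1\circ e_1)\times\{0\}\subseteq I_1\times\{0\}=I$ and avoids $(0,0)$ since $t\circ e_1\circ e_1\in\mathbf C$ contains $t\neq 0$; the product $(t,e_2)\circ(e_1,b)$ has second component $e_2\circ b\ni b\neq 0$, so it escapes $I$; and $(e_1,0)\notin\sqrt I$ because $e_1\notin\sqrt{I_1}$ --- again contradicting $(1)$. Hence one of $I_1,I_2$ is the whole ring, and together with the preceding paragraph this is exactly statement $(2)$.

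The one genuinely delicate point in all of this is making sure each exhibited product does not contain $0$, so that the \emph{weakly} $1$-absorbing primary hypothesis (and not merely the ordinary $1$-absorbing primary one) is what applies; this is exactly where the standing convention that all hyperideals are $\mathbf C$-hyperideals is used, since a member of $\mathbf C$ that meets the $\mathbf C$-hyperideal $\langle 0\rangle=\{0\}$ must be contained in it and hence cannot contain a nonzero element. The remaining ingredients --- the descriptions of the hyperideals and prime hyperideals of a product hyperring, the radical formula $\sqrt{I_1\times I_2}=\sqrt{I_1}\times\sqrt{I_2}$, the fact that $\sqrt{J}$ is proper whenever $J$ is a proper $\mathbf C$-hyperideal, and the routine checks that the displayed triples consist of nonunits --- are all standard.
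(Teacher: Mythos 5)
Your proposal is correct and follows essentially the same route as the paper: the links among (2), (3) and (4) are dispatched by citing Theorem \ref{21} and Theorem \ref{25}, and all the substance lies in $(1)\Rightarrow(2)$, which both you and the paper prove by writing $I=I_1\times I_2$ and testing triples built from identities and nonunits of the two factors. Your version is in fact somewhat more careful than the paper's on two points it glosses over --- checking that each exhibited product actually avoids $0$ (via the $\mathbf{C}$-hyperideal convention) and treating the degenerate case where one component of $I$ is zero --- but the underlying argument is the same.
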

\begin{proof}
$(1) \Longrightarrow (2)$ Let the hyperideal $I$ of $R_1 \times R_2$ is  weakly 1-absorbing primary. Suppose that  for some hyperideals $ I_1$ of $R_1$ and $ I_2$ of $R_2$, we have $I=I_1 \times I_2$. Take any  $0 \neq z \in I_1$. We get $0 \neq (z,0) \subseteq (e_{R_1},0) \circ (e_{R_1},0) \circ (z ,e_{R_2})$ which implies $(e_{R_1},0) \circ (e_{R_1},0) \circ (z ,e_{R_2}) \subseteq I_1 \times I_2$. Then $(e_{R_1},0) \circ (e_{R_1},0) \subseteq I_1 \times I_2$ which implies $e_{R_1} \in I_1$ or $(z,e_{R_2}) \in \sqrt{I_1 \times I_2}=\sqrt{I_1} \times \sqrt{I_2}$ which implies $e_{R_2} \in I_2$. This is a contradiction. Hence, we assume that for some proper hyperideal $I=I_1  \times R_2$. Let for some nonunit  $x_1,y_1 \in R_1$, $x_1 \star y_1 \subseteq I_1$. Let $x_2$ be a nonunit element of $R_2$. Since  $0 \neq (x_1 \star y_1, x_2) \subseteq (x_1,e_{R_2}) \circ (e_{R_1}, x_2) \circ (y_1,e_{R_2})$ then  $(x_1,e_{R_2}) \circ (e_{R_1}, x_2) \circ (y_1,e_{R_2}) \subseteq I_1 \times R_2$. This means $(x_1,e_{R_2}) \circ (e_{R_1},x_2) \subseteq I_1 \times R_2$ which implies $x_1 \in I_1$ or $(y_1, e_{R_2}) \in \sqrt{I_1 \times R_2}=\sqrt{I_1} \times R_2$ which implies $y_1 \in \sqrt{I_1}$.\\
$(2) \Longrightarrow (3)$ The proof  follows from Theorem \ref{21} (1).\\
$(3) \Longrightarrow (4)$ The proof  follows from Theorem \ref{25}.\\
$(4) \Longrightarrow (1)$ It is obvious.
\end{proof}
\begin{theorem} \label{phi}
Let $R_1$ and $R_2$ be multiplicative hyperrings and $ \phi:R_1 \longrightarrow R_2$ be a good homomorphism. Then the followings hold : 
\begin{itemize}
\item[{(1)}]~ If $\phi$ is a monomorphism and for
every nonunit element $x \in R_1$, 
$\phi(x)$ is a nonunit element of $R_2$ and  $I_2$ is a weakly 1-absorbing primary hyperideal of $R_2$, then $\phi^{-1} (I_2)$ is a weakly 1-absorbing primary hyperideal of $R_1$.
\item[{(2)}]~ If $\phi $ is an epimorphism and $I_1$ is a weakly   1-absorbing primary hyperideal of $R_1$ containing $Ker(\phi)$, then $\phi(I_1)$ is a weakly 1-absorbing primary hyperideal of $R_2$.
\end{itemize}
\end{theorem}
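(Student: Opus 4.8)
The plan is to run the two parts in parallel with the proofs of Theorem~\ref{42} and its strongly-1-absorbing analogue, inserting in each direction the extra bookkeeping needed to propagate the clause ``$0\notin$ the triple product''.

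For part (1): I would start from nonunit elements $x,y,z\in R_1$ with $0\notin x\circ y\circ z\subseteq\phi^{-1}(I_2)$ and push forward, getting $\phi(x)\circ\phi(y)\circ\phi(z)=\phi(x\circ y\circ z)\subseteq I_2$ with $\phi(x),\phi(y),\phi(z)$ nonunit by the standing hypothesis on $\phi$. The new point is that $0\notin\phi(x)\circ\phi(y)\circ\phi(z)$: if $0$ lay in this set, then since $\phi(x\circ y\circ z)$ contains $0=\phi(0)$ and $\phi$ is injective, we would get $0\in x\circ y\circ z$, a contradiction. Then the weakly 1-absorbing primary property of $I_2$ gives $\phi(x)\circ\phi(y)\subseteq I_2$ or $\phi(z)\in\sqrt{I_2}$; the first yields $x\circ y\subseteq\phi^{-1}(I_2)$, and the second yields $z\in\phi^{-1}(\sqrt{I_2})=\sqrt{\phi^{-1}(I_2)}$ via the radical--preimage identity already used in the paper (concretely, $\phi(z)^n\subseteq I_2$ forces $\phi(z^n)\subseteq I_2$, so $z^n\subseteq\phi^{-1}(I_2)$).

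For part (2): given nonunit $x_2,y_2,z_2\in R_2$ with $0\notin x_2\circ y_2\circ z_2\subseteq\phi(I_1)$, use surjectivity to choose $x_1,y_1,z_1\in R_1$ mapping onto them. I would first note these preimages may be taken nonunit, because a good epimorphism sends the scalar identity of $R_1$ to a scalar identity of $R_2$, hence units to units; so every preimage of a nonunit is nonunit. Next comes the elementwise pullback argument verbatim from Theorem~\ref{42}(2): for $u\in x_1\circ y_1\circ z_1$ we have $\phi(u)\in\phi(I_1)$, so $\phi(u)=\phi(w)$ with $w\in I_1$, whence $u-w\in Ker(\phi)\subseteq I_1$ and $u\in I_1$; thus $x_1\circ y_1\circ z_1\subseteq I_1$. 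Again the extra observation is that $0\notin x_1\circ y_1\circ z_1$, since otherwise $0=\phi(0)\in x_2\circ y_2\circ z_2$. Now $I_1$ weakly 1-absorbing primary gives $x_1\circ y_1\subseteq I_1$ or $z_1\in\sqrt{I_1}$, and applying $\phi$ gives $x_2\circ y_2=\phi(x_1\circ y_1)\subseteq\phi(I_1)$ or $z_2=\phi(z_1)\in\phi(\sqrt{I_1})\subseteq\sqrt{\phi(I_1)}$, completing the proof.

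The substantive steps are all routine homomorphism and radical identities that already appear in the paper; the only genuinely new care is making sure the condition ``$0\notin$ product'' survives transport along $\phi$ in (1) (this is exactly where injectivity is needed) and along the pullback in (2) (where only $\phi(0)=0$ is needed). I expect the mildest obstacle to be the explicit justification in (2) that the preimages $x_1,y_1,z_1$ can be chosen nonunit, which the non-weak versions glossed over but which should be spelled out here.
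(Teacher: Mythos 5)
Your proposal is correct and follows essentially the same route as the paper's own proof: push the triple product forward (resp.\ pull it back elementwise through $Ker(\phi)\subseteq I_1$), apply the weakly 1-absorbing primary hypothesis, and transport the conclusion via the radical identities. The only difference is that you make explicit two points the paper glosses over --- that injectivity (resp.\ $\phi(0)=0$) preserves the condition ``$0\notin$ product'' and that preimages of nonunits under a good epimorphism may be taken nonunit --- which is a welcome tightening rather than a change of method.
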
 
\begin{proof} 
(1)  Let for some nonunit elements  $x,y,z \in R_1$, $0 \neq x \circ  y \circ z \subseteq \phi^{-1}(I_2)$. Since $\phi$ is a monomorphism then  $0 \neq \phi(x) \circ \phi(y) \circ \phi(z) \subseteq I_2$. Since hyperideal $I_2$ of $R_2$ is weakly 1-absorbing primary, then $\phi (x) \circ \phi (y) \subseteq I_2$ which implies $x \circ y \subseteq \phi^{-1}(I_2)$ or $\phi(z) \in \sqrt{I_2}$ which implies $z \in \sqrt{\phi^{-1}(I_2)}=\phi^{-1}(\sqrt{I_2})$. Thus  $\phi^{-1} (I_2)$ is a weakly 1-absorbing primary hyperideal of $R_1$.\\
(2) Let for some nonunit elements $x_2,y_2,z_2 \in R_2$,  $0 \neq x_2 \circ y_2 \circ z_2 \subseteq \phi(I_1)$. Since $\phi$ is an epimorphism, there exist $x_1,y_1,z_1 \in R_1$ such that $\phi(x_1)=x_2, \phi(y_1)=y_2, \phi(z_1)=z_2$ and so $\phi(x_1 \circ y_1 \circ z_1)=x_2 \circ y_2 \circ z_2\subseteq \phi(I_1)$. Now take any $u \in x_1 \circ  y_1 \circ z_1 $. Then we get $\phi(u) \in \phi(x_1 \circ y_1 \circ z_1) \subseteq \phi(I_1)$ and so $\phi(u) = \phi(w)$ for some $w \in I_1$. This implies that $\phi(u-w) = 0 $, that is, $u-w \in Ker(\phi) \subseteq I_1$ and so $u \in I_1$. Since $I_1$ is a ${\bf C}$-hyperideal of $R_1$, then we conclude that $0 \neq x_1 \circ y_1 \circ z_1\subseteq I_1$. Since $I_1$ is a weakly 1-absorbing primary hyperideal of $R_1$ then $x_1 \circ y_1 \subseteq I_1$ which implies $x_2 \circ y_2 =\phi(x_1 \circ y_1) \subseteq \phi(I_1)$ or $z_1 \in \sqrt{I_1}$ which implies $z_2=\phi(z_1) \in  \phi(\sqrt{I_1})=\sqrt{\phi(I_1)}$ .Thus, $\phi(I_1)$ is a weakly 1-absorbing primary hyperideal  of $R_2$. 
\end{proof}
\begin{corollary}
Let  $I$ and $J$ be proper hyperideals of $R$ such that $J \subseteq I$. If  $I$ is a weakly 1-absorbing
primary hyperideal of $R$, then $I/J$ is a weakly 1-absorbing primary ideal of $R/J$.
\end{corollary}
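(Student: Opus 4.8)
The plan is to realize $I/J$ as the image of $I$ under the canonical projection and then invoke Theorem \ref{phi}. First I would define $\phi\colon R \longrightarrow R/J$ by $\phi(r)=r+J$ for every $r\in R$. Since $J$ is a (\textbf{C}-)hyperideal of $R$, the quotient $R/J$ carries the structure of a multiplicative hyperring, with addition $(r_1+J)+(r_2+J)=(r_1+r_2)+J$ and hyperoperation $(r_1+J)\circ(r_2+J)=\{\,s+J : s\in r_1\circ r_2\,\}$, and with respect to this structure $\phi$ is a good epimorphism whose kernel is exactly $J$. Because $J\subseteq I$ we have $Ker(\phi)=J\subseteq I$, and since $I$ is proper, $\phi(I)=I/J$ is a proper hyperideal of $R/J$.

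Next I would simply apply part (2) of Theorem \ref{phi}: as $\phi$ is an epimorphism and $I$ is a weakly $1$-absorbing primary hyperideal of $R$ containing $Ker(\phi)=J$, it follows that $\phi(I)=I/J$ is a weakly $1$-absorbing primary hyperideal of $R/J$, which is precisely the assertion. Note that, unlike the monomorphism case in part (1) of Theorem \ref{phi} — and unlike Corollary \ref{43} — the epimorphism statement carries no hypothesis on the images of nonunit elements, so no extra assumption on $R/J$ is needed here.

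The only point that genuinely requires checking — and the part I would spell out if a fully self-contained argument were desired — is the verification that $\phi$ is a good homomorphism in the hyperring sense, namely that $\phi(x+y)=\phi(x)+\phi(y)$ and $\phi(x\circ y)=\phi(x)\circ\phi(y)$ hold with the quotient hyperoperation defined above; this is routine from the definition of a hyperideal together with the compatibility of $\circ$ with the additive coset structure, and it also guarantees that $I/J$ inherits the \textbf{C}-hyperideal property used implicitly throughout. Once this is in hand, the corollary is an immediate citation of Theorem \ref{phi}(2), so there is no real obstacle beyond bookkeeping.
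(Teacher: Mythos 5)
Your proposal is correct and follows essentially the same route as the paper: both define the canonical good epimorphism $\phi\colon R\longrightarrow R/J$, $\phi(r)=r+J$, observe that $\mathrm{Ker}(\phi)=J\subseteq I$, and conclude by citing Theorem \ref{phi}(2). The extra remarks you add (the explicit quotient hyperoperation, the absence of any nonunit hypothesis in the epimorphism case) are sound and merely flesh out what the paper leaves implicit.
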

\begin{proof}
Define $\phi :R \longrightarrow R/I$ by $\phi(r)=r+J$. It is clear that $\phi$  is a good epimorphism. Since $Ker (\phi)=J \subseteq I$ and $I$ is a weakly 1-absorbing
primary hyperideal of $R$, then the claim follows from
Theorem \ref{phi} (2).

\end{proof}
\begin{definition}
Let the hyperideal $I$ of $R$  be  weakly 1-absorbing primary and for some proper hyperideals $J,H,K$ of $R$, $J \circ H \circ K \subseteq I$. If for every $j \in J$ , $h \in H$, $k \in K$, $(j,h,k)$ is not a 1-triple zero, then $I$ is called a free 1-triple zero with respect to $J \circ H \circ K$.
\end{definition}
\begin{lem} \label{JHK}
Let the hyperideal $I$ of $R$ be weakly 1-absorbing primary  and $K$ be a proper hyperideal of $R$ such that for some $x,y \in R$, $x \circ y \circ K \subseteq I$. If for all $k \in K$, $(x,y,k)$ is not a 1-triple zero of $I$ and $x \circ y \nsubseteq I$, then $K \subseteq \sqrt{I}$.
\end{lem}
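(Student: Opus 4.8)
The plan is to fix an arbitrary $k \in K$ and prove $k \in \sqrt{I}$; since $k$ was arbitrary, this yields $K \subseteq \sqrt{I}$. From $x \circ y \circ K \subseteq I$ and $k \in K$ we immediately get $x \circ y \circ k \subseteq I$. First I would note that $k$ is necessarily a nonunit element of $R$: if $e \in k \circ k'$ for some $k' \in R$, then $x \circ y \subseteq x \circ y \circ e \subseteq x \circ y \circ k \circ k' \subseteq I \circ k' \subseteq I$, contradicting $x \circ y \nsubseteq I$. (I also take $x$ and $y$ to be nonunit, since otherwise the hypothesis that $(x,y,k)$ is never a $1$-triple zero carries no information.)

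Next I would split into cases according to whether $0$ lies in the hyperproduct $x \circ y \circ k$. If $0 \notin x \circ y \circ k$, then $0 \notin x \circ y \circ k \subseteq I$ with $x,y,k$ nonunit, so by the very definition of a weakly $1$-absorbing primary hyperideal either $x \circ y \subseteq I$ or $k \in \sqrt{I}$; since $x \circ y \nsubseteq I$ by hypothesis, we conclude $k \in \sqrt{I}$. If instead $x \circ y \circ k = \{0\}$, then $(x,y,k)$ already satisfies two of the three conditions defining a $1$-triple zero of $I$ (namely $\{0\} = x \circ y \circ k$ and $x \circ y \nsubseteq I$); since by hypothesis $(x,y,k)$ is not a $1$-triple zero, the third condition must fail, i.e. $k \in \sqrt{I}$. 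In both situations $k \in \sqrt{I}$, completing the argument.

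The step I expect to require the most care is the borderline case in which $0 \in x \circ y \circ k$ but $x \circ y \circ k \neq \{0\}$: here one can invoke neither the defining implication of a weakly $1$-absorbing primary hyperideal (which presupposes $0 \notin x \circ y \circ k$) nor the notion of a $1$-triple zero (which presupposes $x \circ y \circ k = \{0\}$) directly. The natural remedy is a perturbation inside $K$: if some $k_1 \in K$ has $0 \notin x \circ y \circ k_1$ then $k_1 \in \sqrt{I}$ by the first case, and using distributivity $x \circ y \circ (k + k_1) \subseteq x \circ y \circ k + x \circ y \circ k_1$, one tries to arrange $0 \notin x \circ y \circ (k + k_1)$, deduce $k + k_1 \in \sqrt{I}$, and hence $k = (k+k_1) - k_1 \in \sqrt{I}$ since $\sqrt{I}$ is a hyperideal; and if no such $k_1$ exists, then $0 \in x \circ y \circ k'$ for every $k' \in K$, which one handles separately. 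Making this reduction airtight in all sub-cases is the main obstacle; once past it, the lemma follows purely by unwinding definitions.
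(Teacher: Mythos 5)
Your two clean cases are precisely the paper's own proof: the paper argues contrapositively, picks $k\in K$ with $k\notin\sqrt{I}$, notes $x\circ y\circ k\subseteq I$, and concludes $k\in\sqrt{I}$ either from the weakly $1$-absorbing primary property (when $0\notin x\circ y\circ k$) or from the hypothesis that $(x,y,k)$ is not a $1$-triple-zero (when $x\circ y\circ k=\{0\}$). So on the cases the paper actually treats, you have reproduced its argument, and your preliminary observation that $k$ must be nonunit is a correct refinement the paper omits. The borderline case you isolate, namely $0\in x\circ y\circ k$ with $x\circ y\circ k\neq\{0\}$, is a genuine lacuna --- but it is a lacuna of the published proof as well: the paper silently treats ``$0\notin x\circ y\circ k$'' and ``$x\circ y\circ k=\{0\}$'' as exhaustive alternatives, which they are not for a set-valued hyperproduct under the paper's definition of a $1$-triple-zero (which requires the whole product to \emph{equal} $\{0\}$ rather than merely to \emph{contain} $0$). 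Your proposed perturbation does not obviously close this gap: if every $k'\in K$ satisfies $0\in x\circ y\circ k'\neq\{0\}$ there is nothing to perturb by, and even when a suitable $k_1$ exists, the inclusion $x\circ y\circ(k+k_1)\subseteq x\circ y\circ k+x\circ y\circ k_1$ gives no control ensuring $0\notin x\circ y\circ(k+k_1)$. The honest repair is at the level of the definition: if a $1$-triple-zero is taken to mean $0\in x\circ y\circ z$ (with $x\circ y\nsubseteq I$ and $z\notin\sqrt{I}$), your two cases become exhaustive and your argument is complete as written. In short, your proof matches the paper's where the paper is correct, and the extra case you flag is a defect of the source rather than of your reasoning.
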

\begin{proof}
Let $K \subseteq \sqrt{I}$. This means that there exists $k \in K$ but $k \notin \sqrt{I}$. Then we have $x \circ y \circ k \subseteq x \circ y \circ K \subseteq I$. Since $(x,y,k)$ is not a 1-triple zero and $x \circ y \nsubseteq I$ and $x \circ y \circ z =0$, then we get $k \in \sqrt{I}$ which is a contradiction. Hence, $K \subseteq \sqrt{I}$.
\end{proof}
\begin{theorem}
Let the hyperideal $I$ of $R$ be  weakly 1-absorbing primary and for some proper hyperideals $J,H,K$ of $R$, $0 \neq J \circ H \circ K \subseteq I$. If $I$ is free 1-triple zero with respect to $J \circ H \circ K$, then $J \circ H \subseteq I$ or $K \subseteq \sqrt{I}$.
\end{theorem}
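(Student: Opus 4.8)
The plan is to run the standard ``element\mbox{-}to\mbox{-}ideal'' argument and reduce everything to Lemma~\ref{JHK}. Suppose $J \circ H \nsubseteq I$ (otherwise the first alternative of the conclusion already holds) and aim to prove $K \subseteq \sqrt{I}$. Since $J \circ H = \bigcup_{j\in J,\,h\in H} j\circ h$, the failure $J\circ H \nsubseteq I$ produces a pair $j\in J$, $h\in H$ with $j\circ h \nsubseteq I$. First I would record that $j$ and $h$ are nonunit: a proper hyperideal contains no unit, since a unit $u$ lying in a hyperideal would give $e \in u\circ u^{-1}$ inside it and collapse it to $R$; the same remark makes every element of $K$ nonunit as well, which is needed for the notion of $1$-triple zero to apply to the triples we form.

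Next I would fix this pair $(j,h)$ once and for all. From $0\neq J\circ H\circ K \subseteq I$ we get $j\circ h\circ K \subseteq J\circ H\circ K \subseteq I$, and the hypothesis that $I$ is free $1$-triple zero with respect to $J\circ H\circ K$ tells us that $(j,h,k)$ is not a $1$-triple zero of $I$ for every $k\in K$. These are exactly the hypotheses of Lemma~\ref{JHK} applied with $x=j$, $y=h$ and the proper hyperideal $K$ (namely $j\circ h\circ K\subseteq I$, $j\circ h\nsubseteq I$, and $(j,h,k)$ not a $1$-triple zero for all $k\in K$), so the lemma yields $K\subseteq\sqrt{I}$, which is what we wanted.

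I expect the only subtle point to be the reduction itself: one must pick a single witness $(j,h)$ for $j\circ h\nsubseteq I$ and then keep it frozen while $k$ ranges over all of $K$, so that Lemma~\ref{JHK} can be invoked verbatim; the ``free $1$-triple zero'' assumption is precisely what prevents any of the triples $(j,h,k)$ from obstructing this. The hypothesis $0\neq J\circ H\circ K$ enters only through the weakly $1$-absorbing primary mechanism already packaged inside Lemma~\ref{JHK}, and everything else is routine bookkeeping with the identities $A\circ(B\circ C)=(A\circ B)\circ C$ and $\langle x\rangle\circ\langle y\rangle\circ\langle z\rangle\subseteq I$-type containments.
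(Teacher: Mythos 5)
Your proposal is correct and is essentially the paper's own argument: assume $J\circ H\nsubseteq I$, extract a witness pair $j\in J$, $h\in H$ with $j\circ h\nsubseteq I$, observe that $j\circ h\circ K\subseteq I$ and that the free $1$-triple zero hypothesis rules out any $(j,h,k)$ being a $1$-triple zero, and then invoke Lemma~\ref{JHK} to get $K\subseteq\sqrt{I}$. The extra remarks about $j$, $h$, and the elements of $K$ being nonunit are harmless added care that the paper leaves implicit.
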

\begin{proof}
Let for proper hyperideals $J,H,K$ of $R$, $0 \neq J \circ H \circ K \subseteq I$ such that $J \circ H \nsubseteq I$. This means that there exist $j \in J$ and $h \in H$ such that $j \circ h \nsubseteq I$. For all $k \in K$, $(j,h,k)$ is not a
1-triple zero of $I$, because  $I$ is a free 1-triple zero with respect to $J \circ H \circ K$. By Lemma \ref{JHK}, we conclude that $K \subseteq \sqrt{I}$
\end{proof}

\end{document}